\documentclass[12pt]{amsart}
\usepackage{mathrsfs}
\usepackage{geometry}
\usepackage{titletoc}
\usepackage{mathpazo}
\usepackage{amsmath}
\usepackage{amssymb} 
\usepackage{mathtools} 
\usepackage[table, svgnames]{xcolor} 
\usepackage[all]{xy} 
\usepackage{tikz} 
\usepackage{tikz-cd}
\usepackage{indentfirst} 
\usepackage{setspace} 
\usepackage{enumerate}
\usepackage{color}
\allowdisplaybreaks[4]

\usepackage{hyperref}
\hypersetup{hypertex=true, colorlinks=true, linkcolor=red, anchorcolor=green, citecolor=blue}

\usepackage{rotating} 

\usepackage{ytableau} 
\usepackage{longtable} 
\newcolumntype{M}[1]{>{\centering\arraybackslash}m{#1}} 

\usepackage{mathrsfs}
\DeclareFontFamily{OMS}{rsfs}{\skewchar\font'60}
\DeclareFontShape{OMS}{rsfs}{m}{n}{<-5>rsfs5 <5-7>rsfs7 <7->rsfs10 }{}
\DeclareSymbolFont{rsfs}{OMS}{rsfs}{m}{n}
\DeclareSymbolFontAlphabet{\scr}{rsfs}
\DeclareSymbolFontAlphabet{\scr}{rsfs}

\usepackage[T1]{fontenc}

\newcommand\cE{{\mathcal E}}

\newcommand\cH{{\mathcal H}}

\newcommand{\MA}{\mathrm{MA}}

\newcommand{\ddc}{\mathrm{dd}^c}

\newcommand{\Vol}{\mathrm{Vol}}
\newcommand{\dist}{\mathrm{dist}}

\newcommand{\Psh}{{\textup{Psh}}}

\newcommand{\abs}[1]{\left\lvert#1\right\rvert}
\newcommand{\norm}[1]{\left\lVert#1\right\rVert}

\theoremstyle{theorem}
\newtheorem{theorem}{Theorem}[section]
\newtheorem{proposition}[theorem]{Proposition}

\newtheorem{lemma}[theorem]{Lemma}

\newtheorem{corollary}[theorem]{Corollary}

\theoremstyle{definition}
\newtheorem{definition}[theorem]{Definition}
\newtheorem{remark}[theorem]{Remark}
\newtheorem{example}[theorem]{Example}

\newcommand{\btheorem}{\begin{thm}}
	\newcommand{\etheorem}{\end{thm}}
\newcommand{\bproposition}{\begin{prop}}
	\newcommand{\eproposition}{\end{prop}}
\newcommand{\bdefinition}{\begin{defn}}
	\newcommand{\edefinition}{\end{defn}}
\newcommand{\bcorollary}{\begin{cor}}
	\newcommand{\ecorollary}{\end{cor}}
\newcommand{\bproof}{\begin{proof}}
	\newcommand{\eproof}{\end{proof}}
\newcommand{\bremark}{\begin{remark}}
	\newcommand{\eremark}{\end{remark}}
\newcommand{\eexample}{\end{example}}
\newcommand{\bexample}{\begin{example}}

\newcommand{\elemma}{\end{lemma}}
\newcommand{\blemma}{\begin{lemma}}

\newcommand{\p}{\partial}

\renewcommand{\bar}{\overline}

\newcommand{\beq}{\begin{equation}}
\newcommand{\eeq}{\end{equation}}
\newcommand{\ee}{\end{eqnarray*}}
\newcommand{\be}{\begin{eqnarray*}}

\newcommand{\bd}{\begin{enumerate}}
	\newcommand{\ed}{\end{enumerate}}

\renewcommand{\hat}{\widehat}
\renewcommand{\tilde}{\widetilde}

\renewcommand{\>}{\rightarrow}

\newcommand{\CC}{\mathbb {C}}

\newcommand{\RR}{{\mathbb R}}

\newcommand{\ov}[1]{\overline{#1}}
\newcommand{\de}{\partial}

\newcommand{\ddbar}{\sqrt{-1} \partial \overline{\partial}}

\newcommand{\PSH}{\mathrm{PSH}}
\newcommand{\ve}{\varepsilon}
\newcommand{\e}{\varepsilon}
\newcommand{\vp}{\varphi}

\makeatletter
\newcommand{\constant}[2]{%
  \@ifundefined{c@#1counter}{\newcounter{#1counter}}{}%
  \refstepcounter{#1counter}%
  \protected@write\@auxout{}{%
    \string\newlabel{#2}{{\arabic{#1counter}}{\thepage}{}{#1@\arabic{#1counter}}{}}%
  }%
  #1_{\arabic{#1counter}}%
}
\makeatother

\numberwithin{equation}{section} 

\makeatletter

\ifnum\@ptsize=0  \fi \ifnum\@ptsize=2  \fi

\usepackage{fancyhdr}
\title[Variational]{Some variational problems for the complex Monge--Amp\`ere operator}

\subjclass[2020]{32W20, 35J20}

\author{Jianchun Chu}
\address{School of Mathematical Sciences, Peking University, Yiheyuan Road 5, Beijing 100871, People's Republic of China}
\email{jianchunchu@math.pku.edu.cn}

\author{Yaxiong Liu}
\address{Department of Mathematics, University of Maryland,
4176 Campus Dr,
College Park, MD 20742, USA}
\email{yxliu238@umd.edu}

\author{Nicholas McCleerey}
\address{Department of Mathematics,
Purdue University, West Lafayette
150 N University St
West Lafayette, IN 47907}
\email{nmccleer@purdue.edu}

\author{Weijun Zhang}
\address{School of Mathematics and Statistics, Nanfang College, Guangzhou, No. 882, Wenquan Avenue, Conghua District, Guangzhou 510970, People's Republic of China}
\email{zwj2017@amss.ac.cn}

\begin{document}
\begin{abstract}
We consider the Dirichlet problem for the complex Monge--Amp\`ere equation on strongly pseudoconvex K\"ahler manifolds when the right-hand side is decreasing in the solution. Using flow-based arguments, we establish existence of smooth solutions in a number of natural circumstances, following work of Chou-Wang.
\end{abstract}
	
	\maketitle
	
\small{\setcounter{tocdepth}{2} \tableofcontents
		\dottedcontents{section}[0.8cm]{}{1.8em}{1555pt}
		\dottedcontents{subsection}[2.0cm]{}{3em}{1555pt}
}

\section{Introduction} \label{Introduction}

Suppose that $(\Omega^n,\omega)$ is a strongly pseudoconvex K\"ahler manifold (see Definition~\ref{def:s-pseudoconvex}), of complex dimension $n$. We are interested in the Dirichlet problem:
\begin{align}\label{eq:initial equation}
	\begin{cases}
		(\ddbar u)^n=\psi^n(z,u)\omega^n, &\text{ in }\Omega, \\
		u=0, &\text{ on }\p\Omega,
	\end{cases}
\end{align}
where $\psi$ is a nonnegative function in $\bar\Omega\times\RR$. We set:
\begin{align*}
\cH(\Omega) \coloneqq\{u\in\Psh(\Omega)\cap C^2(\Omega)\cap C^0(\bar\Omega)~|~u|_{\p\Omega}=0\}
\end{align*}
and write $E(u) \coloneqq \frac{1}{n+1} \int_\Omega (-u)(\ddbar u)^n$ for the Monge--Amp\`ere energy of $u\in \mathcal{H}(\Omega)$. Setting $\Psi(z, x) \coloneqq \int_x^0\psi^n(z,s)ds$, one can check that:
\[
J(u) \coloneqq E(u) - \int_\Omega \Psi(z, u)\omega^n, \quad\quad u\in \mathcal{H}(\Omega)
\]
is an Euler-Lagrange functional for \eqref{eq:initial equation}.\\

It is well-known that \eqref{eq:initial equation} is uniquely solvable if one can prove an {\it a priori} $L^\infty$ bound on the solution c.f. \cite{PSS12}. This is in particular always possible when $\psi$ is increasing in the second variable, by a simple application of the maximum prinicple. When $\psi$ is decreasing in the second variable however, this is no longer possible, and indeed, both existence and uniqueness of solutions may fail.

An important special case is the {\bf eigenvalue problem}, where $\psi(z, u) = -\lambda u(z)$, for a constant $\lambda > 0$ which is also to be solved for; recently, this eigenvalue problem was studied by Badiane-Zeriahi \cite{BZ23}, following the method of Lions \cite{Lions85}. They establish the existence of a unique $\lambda$, known as the {\bf first eigenvalue} of $(\Omega, \omega)$, for which one can solve \eqref{eq:initial equation}. The solution itself, known as the {\bf first eigenfunction}, is unique up to a multiplicative constant. From now on, we denote this pair $(u_1, \lambda_1)$, with the normalization $\inf_\Omega u_1 = -1$. We refer to \cite{BZ24, CLM26, LZ25} for follow-up works on related eigenvalue problems in the complex setting. \\

The first eigenvalue turns out to act as a natural bifurcation parameter for \eqref{eq:initial equation} for general $\psi$ -- for instance, following \cite[Corollary 2]{Lions85}, it was shown in \cite[Theorem 1.3]{CLM26} that \eqref{eq:initial equation} always admits a unique solution when $\psi \geq \e > 0$ and $\p_u \psi \geq -\lambda_1 + \e$, for some $\e > 0$.

In this paper, we aim to study existence of solutions to \eqref{eq:initial equation} under weaker, asymptotic assumptions on the growth of $\psi$ in $u$. For the real Monge--Amp\`ere equation (and also the real $k$-Hessian equations), this was done by Chou--Wang \cite{CW01}, who develop results similar to those for the semi-linear equation:
\[
\Delta u = \psi(x, u)
\]
studied by Ambrosetti--Rabinowitz \cite{AR73}. We largely follow the methods of \cite{CW01}, with adaptations to the complex setting; the main difficulty lies in developing new effective {\it a priori} estimates for certain flows related to \eqref{eq:initial equation}. \\

Our first result extends the Monge--Amp\`ere case of \cite[Theorem 1.3]{CLM26}, and concerns $\psi$ which grow ``sublinearly" in the solution variable:
\begin{theorem}[Sublinear Growth]\label{Sublinear Theorem}
 Let $\psi\in C(\bar{\Omega\times\RR_-})\cap C^{1,1}(\bar\Omega\times\RR_-)$. Suppose that $\psi(z,x)>0$ for $x<0$, and that
	\begin{align}
	&\lim_{x\>0^-}\frac{\psi(z,x)}{|x|} > \lambda_1 \quad \text{uniformly for } z\in \ov{\Omega}, \text{ and } \label{eq:sublinear growth cond go to 0- intro}\\
	&\lim_{x\rightarrow-\infty}\frac{\psi(z,x)}{|x|} <\lambda_1 \quad \text{uniformly for } z\in\ov{\Omega}. \label{eq:sublinear_growthgoto-infty}
	 \end{align}
 Then there exists a non-zero solution $u\in  C^{1,1}(\bar\Omega)\cap C^\infty(\Omega) \cap \cH(\Omega)$ to \eqref{eq:initial equation} which minimizes $J$ over $\mathcal{H}(\Omega)$.
\end{theorem}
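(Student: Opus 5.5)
The plan is to follow Chou--Wang and minimize $J$ along a parabolic flow, namely the descending gradient flow of $J$,
\begin{align*}
\partial_t u = \log\det(\ddbar u) - \log\det\omega - n\log\psi(z,u), \qquad u|_{\p\Omega}=0,
\end{align*}
with $u(\cdot,0)=u_0\in\cH(\Omega)$ strictly psh and compatible at the boundary.

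\textbf{Step 1: choice of $u_0$ and the monotone quantity.} Condition \eqref{eq:sublinear growth cond go to 0- intro} shows that $J$ takes negative values: for small $\e>0$, expanding $J(\e u_1)$ with $(\ddbar u_1)^n=\lambda_1^n(-u_1)^n\omega^n$ gives $J(\e u_1)<0$, since $\Psi(z,\e u_1)\ge \frac{(\lambda_1+\delta)^n}{n+1}\e^{n+1}|u_1|^{n+1}$. Because $u_1$ may be degenerate on $\p\Omega$, I would perturb it slightly to a smooth strictly psh function and take this as $u_0$, so that $J(u_0)<0$. A direct computation gives
\[
\frac{d}{dt}J(u(t))=-\int_\Omega (\partial_t u)\big((\ddbar u)^n-\psi^n\omega^n\big)\le 0,
\]
because $\partial_t u$ and $(\ddbar u)^n-\psi^n\omega^n$ have the same sign. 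Hence $J(u(t))\le J(u_0)<0$ for all $t$.

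\textbf{Step 2: uniform $C^0$ bound.} This is the main obstacle. From \eqref{eq:sublinear_growthgoto-infty} we get $\Psi(z,x)\le \frac{(\lambda_1-\delta)^n}{n+1}|x|^{n+1}+C$. The variational characterization of $\lambda_1$ from \cite{BZ23}, namely $\lambda_1^n\int(-v)^{n+1}\omega^n\le\int(-v)(\ddbar v)^n$, then yields
\[
J(v)\ge c\,E(v)-C .
\]
Together with $J(u(t))\le 0$, this bounds $E(u(t))$ uniformly. To pass from an energy bound to $\sup|u|$, I would use that $\psi(z,u)\le C(1+|u|)$. The Ko{\l}odziej/capacity $L^\infty$ estimate for Monge--Amp\`ere measures with density in $L^p$, $p>1$, should then apply. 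This requires $L^p$ control of $|u|^n$, which follows from $E$-bounds via Skoda/integrability in the class $\mathcal E_1$. The difficulty is that along the flow $(\ddbar u)^n\ne\psi^n\omega^n$. To handle this, I would bound $\partial_t u$ by the maximum principle: $\partial_t u$ satisfies a linear parabolic equation whose zero-order term involves $-n\,\partial_u\log\psi$, which has a bad sign, so some care is needed. The plan is to get $(\ddbar u)^n\le e^{C}\psi^n\omega^n$, either by barrier arguments or by rewriting the flow via $\partial_t u = \log\big((\ddbar u)^n/\psi^n\omega^n\big)$ and a maximum principle for $\sup(\partial_t u)_+$.

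\textbf{Step 3: higher estimates and convergence.} Given the $C^0$ bound, I would obtain $C^1$ and $C^{1,1}$ bounds up to the boundary via subsolutions/barriers from strong pseudoconvexity, in the style of \cite{PSS12}, adapted to the parabolic setting. Interior $C^\infty$ estimates follow once a lower bound $\psi(z,u)\ge c>0$ holds on compacts. This needs $u<0$ strictly in the interior, which comes from the strong maximum principle and $u\not\equiv0$. The flow exists for all time, and $\int_0^\infty\int(\partial_t u)\big((\ddbar u)^n-\psi^n\omega^n\big)<\infty$. A subsequence $u(t_k)$ therefore converges in $C^{1,\alpha}$ to a $C^{1,1}$ solution $u$ with $J(u)\le J(u_0)<0$, so $u\neq0$.

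\textbf{Step 4: minimality.} $J$ is bounded below on $\cH(\Omega)$ by Step 2. To show that $u$ minimizes $J$, I would start the flow from near-minimizing data $v_k$ with $J(v_k)\to\inf J$, after smoothing them, and use uniqueness of the limit up to subsequence. Concavity of $-\Psi$ is not available, so alternatively I would identify the limit via compactness of minimizing sequences in $\mathcal E_1$ and use upper semicontinuity of the energy.
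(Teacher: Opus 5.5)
There is a genuine gap in your Step~2, which you correctly flag as the main obstacle but do not resolve. The energy bound $E(u(t))\le C$ along the flow is fine. To feed Ko{\l}odziej's estimate, however, you need $L^q$ control of $\MA(u(t))=e^{u_t}\psi^n(z,u)$, i.e.\ a time-uniform upper bound on $u_t$, and the tools you name do not give one. The maximum principle for $(u_t)_+$ only yields $\sup u_t\le Ce^{Ct}$. This is because $\p_t u_t=u^{i\ov{j}}(u_t)_{i\ov{j}}-n(\log\psi)_u\,u_t$, and the zero-order term has the bad sign exactly when $\psi$ decreases in the solution, which is the situation here. The quotient trick of Proposition~\ref{u t estimate} gives $|u_t|\le CM$ with $M=\sup|u|$, so it presupposes the $C^0$ bound you are trying to prove. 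Similarly, Proposition~\ref{uniform estimate} with $f=\log\psi^n$ (in general $K_2>0$) only gives bounds growing like $e^{K_2t}$. Your barrier alternative is not specified.

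The missing idea is to avoid needing a $C^0$ bound along the original flow at all. The paper replaces $\psi$ by truncations $\psi_m$ that equal $\psi$ for $|x|\le m$ and are independent of $x$ for $|x|\ge 2m$. Then $f=\log\psi_m^n$ is bounded, $K_2=0$, the barrier $u_0+A\rho$ gives a time-independent $C^0$ bound, and Theorem~\ref{thm:estimate of flow} gives time-independent (but $m$-dependent) estimates. The energy/Ko{\l}odziej argument is applied only to the stationary limits, which genuinely solve $\MA(w)=\psi_m^n(z,w)$. Since $J\le J_m$ and $\psi_m\le\psi$, the resulting $L^\infty$ bound is independent of $m$, and for $m$ large the truncation is invisible. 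Starting each flow from a near-minimizer of $J_m$ (your first option in Step~4) gives minimality directly; no uniqueness of limits is needed, only this compactness. Non-triviality then follows from $\inf J\le J(cu_1)<0$, so your special choice of $u_0$ in Step~1 is unnecessary.

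Second, the flow you write is not well posed under the stated hypotheses. The hypotheses allow $\psi(z,0)=0$ (whenever the limit in \eqref{eq:sublinear growth cond go to 0- intro} is finite), and also allow $\psi\to0$ as $x\to-\infty$, so $\log\psi^n(z,u)$ is unbounded below. On $\p\Omega$ it equals $-\infty$, so the compatibility condition cannot hold for a strictly psh $u_0$. The estimates you invoke in Step~3 then fail; for instance, the lower bound $\MA(u)\ge e^{-CM}$ of Corollary~\ref{MA estimate} is what gives uniform parabolicity and the boundary Hessian estimate. The paper first solves with $\psi^n$ replaced by $\psi^n+\e$, obtaining minimizers $u_\e$ of $J_\e$ with an $\e$-independent $L^\infty$ bound. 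It then uses that the $C^2$ bound in Theorem~\ref{thm:C3alpha-estimate} does not depend on the positive lower bound of $\psi$, and passes to a $C^{1,1}$ limit. Minimality survives because $J_\e\le J$, so $J(u)=\lim J_\e(u_\e)\le\inf_{\cH(\Omega)}J$.
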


As mentioned, our approach to show Theorem \ref{Sublinear Theorem} follows \cite[Theorem 1.3]{CW01}. Briefly, the Rayleigh quotient formula for $\lambda_1$ \cite[Theorem 1.2]{BZ23} (recalled in Section 2 below) and the sublinear growth condition \eqref{eq:sublinear_growthgoto-infty} ensure that $J$ is proper on $\mathcal{H}(\Omega)$ (with respect to the Monge--Amp\`ere energy). By combining this with an effective {\it a priori} H\"older bound, one can deduce a uniform bound, see Remark 4.4. By standard theory, c.f. Theorem \ref{thm:C3alpha-estimate}, it follows that \eqref{eq:initial equation} admits a solution. To produce an actual minimizer however, we need to employ a natural (negative) gradient flow for $J$, which requires a short approximation argument.

Note that, although $J$ is proper, it will not typically have any good convexity properties; as such, solutions to \eqref{eq:initial equation} may not be unique in general. The asymptotic condition \eqref{eq:sublinear growth cond go to 0- intro} ensures that any minimizer is non-zero and is, in some sense, sharp; e.g. for smooth, decreasing, concave $\psi(z,x) = \psi(x)$, $x\leq 0$, non-trivial solutions fail to exist precisely when \eqref{eq:sublinear growth cond go to 0- intro} does not hold, see \cite[Proposition 3]{Lions85}. \\

Our next result considers ``superlinear" growth conditions on $\psi$, based off of the original conditions in \cite{AR73, CW01}. In this case, $J$ will no longer be proper, and we must look for saddle point solutions. We do so by employing an argument similar to the mountain-pass lemma, following again Chou--Wang \cite[Theorem 1.2]{CW01}. 

\begin{theorem}[Superlinear]\label{superlinear_theorem_intro}
	Let $\psi\in C(\bar{\Omega\times\RR_-})\cap C^{1,1}(\bar\Omega\times\RR_-)$. Suppose that $\psi(z, x) > 0$ for $z < 0$, and that there exists constants $0 < \sigma < 2n$, $0 < \theta < 1$, and $M > 0$ such that:
\begin{align}
&\lim_{x\>0^-}\frac{\psi(z,x)}{|x|}<\lambda_1 \quad \text{uniformly for } z\in \ov{\Omega} \label{eq:growth cond go to 0- intro}\\
&\lim_{x\rightarrow -\infty} \psi(z, x)e^{-\frac{\sigma}{n} \abs{x}^{1+\frac{1}{n}}} <\infty \quad \text{uniformly for }z\in\ov{\Omega}\label{eq:exp growth intro}\\
&\int_x^0\psi^n(z,s)ds \leq\frac{1-\theta}{n+1}|x|\psi^n(z,x) \quad \text{ for all $x \leq -M$ and $z\in\ov{\Omega}$.} \label{eq:integration growth upper intro}
\end{align}
Then there exists a non-zero solution $u\in C^{1,1}(\bar\Omega) \cap C^{\infty}(\Omega) \cap \cH(\Omega)$ of \eqref{eq:initial equation}.
\end{theorem}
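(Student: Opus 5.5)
The plan follows Chou--Wang \cite[Theorem 1.2]{CW01}: I would run a mountain-pass argument for $J$, realized through a negative gradient flow instead of an abstract Palais--Smale condition. The flow I would use is the parabolic equation
\[
\partial_t u = \log\det(\ddbar u) - \log\big(\psi^n(z,u)\omega^n\big), \qquad u|_{\partial\Omega}=0,
\]
or an equivalent normalization. Along this flow $J$ is nonincreasing, since $\frac{d}{dt}J(u_t) = -\int_\Omega \partial_t u\,\big((\ddbar u)^n - \psi^n\omega^n\big)\le 0$ by monotonicity of $\log$. I would first replace $\psi$ by a smooth approximation that is positive near $x=0$, so that the flow is uniformly parabolic and has smooth short-time solutions. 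The $C^{1,1}$ and interior $C^\infty$ regularity would then come from Theorem \ref{thm:C3alpha-estimate} once uniform $L^\infty$ bounds are available.

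\textbf{Step 1: mountain-pass geometry.} By the Rayleigh quotient characterization of $\lambda_1$ \cite[Theorem 1.2]{BZ23}, $E(u)\ge \frac{\lambda_1^n}{n+1}\int_\Omega|u|^{n+1}\omega^n$. Condition \eqref{eq:growth cond go to 0- intro} gives $\Psi(z,x)\le \frac{(\lambda_1-\e)^n}{n+1}|x|^{n+1}$ for small $|x|$. Condition \eqref{eq:exp growth intro}, combined with a Moser--Trudinger type inequality for the complex Monge--Amp\`ere energy (valid because $\sigma<2n$), controls the large-$|x|$ part by $o(E(u))$ as $E(u)\to0$. Together these show $J\ge c_0>0$ on the sphere $\{E(u)=r\}$ for small $r$. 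In the other direction, \eqref{eq:integration growth upper intro} integrates to $\psi^n(z,x)\ge c|x|^{(n+1)/(1-\theta)-1}$ for $x\le -M$, which is superlinear. Hence $J(tu_1)\to-\infty$ as $t\to\infty$, so there is $e=t_0u_1$ with $E(e)>r$ and $J(e)<0$. I would set $c=\inf_{\gamma}\max_{s\in[0,1]}J(\gamma(s))\ge c_0$, taking the infimum over paths in $\mathcal H(\Omega)$ from $0$ to $e$.

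\textbf{Step 2: flow estimates.} For initial data $u_0$ with $J(u_0)\le C$, I would prove a dichotomy. Either $J(u_t)\to-\infty$, or $u_t$ stays bounded in $L^\infty$ and subconverges to a stationary solution. The key identity is the Ambrosetti--Rabinowitz computation: $\frac{1}{n+1}\langle J'(u),u\rangle$ equals $E(u)-\frac{1}{n+1}\int|u|\psi^n$, and $(n+1)\Psi\le(1-\theta)|u|\psi^n$. From these, a bound on $J$ together with smallness of $\int \partial_tu\,((\ddbar u)^n-\psi^n\omega^n)$ gives a bound on $E(u_t)$, at least along a sequence of times. The energy bound and \eqref{eq:exp growth intro} put $\psi^n(z,u_t)$ in $L^p$ for some $p>1$. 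The effective a priori H\"older/$L^\infty$ bound (Ko{\l}odziej type, as in Remark 4.4) then gives uniform $C^0$ bounds, and Theorem \ref{thm:C3alpha-estimate} gives higher regularity. \textbf{This step is the main obstacle.} Converting energy bounds into $L^\infty$ bounds requires the exponential integrability constant to stay below the critical threshold, which is why $\sigma<2n$ is needed. The estimates must also be uniform in time and in the approximation of $\psi$.

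\textbf{Step 3: conclusion by contradiction.} Suppose there is no nonzero stationary point. Take a path $\gamma$ with $\max J\circ\gamma<c+\delta$ and flow every point of it. Points near $0$ flow to $0$: $0$ is a strict local minimum by Step 1 and is the only small stationary point. Points near $e$ flow to $J=-\infty$. By continuous dependence on initial data, the set of points flowing to $0$ is open, and so is the set flowing to $-\infty$. By the dichotomy and the assumption, these two sets cover $[0,1]$. Connectedness of $[0,1]$ gives a contradiction. Otherwise, one can flow the whole path for a long time to push its maximum below $c$, again contradicting the definition of $c$.

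Either way a nonzero solution of the approximating problem exists, with $J\ge c_0$ and uniform bounds. Letting the approximation parameter go to zero and using Theorem \ref{thm:C3alpha-estimate}, I obtain $u\in C^{1,1}(\bar\Omega)\cap C^\infty(\Omega)\cap\cH(\Omega)$ solving \eqref{eq:initial equation}. It is nonzero because $J(u)\ge c_0>0=J(0)$.
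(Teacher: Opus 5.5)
Your overall architecture matches the paper's: a mountain-pass level, flowing a near-optimal path, choosing $s_0$ whose flow keeps $J\ge c-a$ for all time, and extracting a nonzero stationary limit. Your Step 1 and the second alternative in your Step 3 correspond to the paper's Steps 2--3. The genuine gap is in Step 2, and it comes from your choice of flow.

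With the logarithmic flow, and with $\psi$ left untruncated at $-\infty$, two things go wrong. First, $f=\log\psi^n(z,u)$ may grow superlinearly (up to $\sigma|u|^{1+1/n}$) with $f_u$ unbounded. Theorem \ref{thm:estimate of flow} needs $|f|\le K_1+K_2|u|$ and $|f_u|\le K_3$, so it gives you no long-time existence. Second, smallness of the log-dissipation does not yield your energy bound. On $\{\psi^n>\MA\}$ it only controls $(\psi^n-\MA)^2/\psi^n$. So the Ambrosetti--Rabinowitz error term $\int|u|(\psi^n-\MA)\omega^n$ is only bounded by $a^{1/2}\big(\int|u|^2\psi^n(z,u)\omega^n\big)^{1/2}$, which $E(u)$ does not control for exponential-type $\psi$.

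The paper fixes both problems. It truncates $\psi$ to polynomial growth $\psi_m^n\sim K_m|x|^{p-1}$, preserving \eqref{eq:integration growth upper intro} with $\theta/2$. It then runs the $\mu$-flow with $\mu(t)=t^{1/p}$ for large $t$. By \eqref{slope inequality}, the dissipation then dominates $\int|\alpha-\beta|^{p+1}\omega^n$, where $\alpha=\MA^{1/p}$ and $\beta=\psi_{m,\delta}^{n/p}$. H\"older against $\|u\|_{L^{p+1}}\le CE(u)^{1/(n+1)}$ then gives \eqref{a p eqn}, and hence \eqref{key uniform estimate}.

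Your passage from an energy bound to $L^\infty$ also fails as stated. Ko{\l}odziej's estimate applies to $u(\cdot,t)$ only if $\MA(u(\cdot,t))=e^{u_t}\psi^n(z,u)$ lies in $L^{1+\gamma}$; it is not enough that $\psi^n(z,u)$ does. On $\{\MA\ge 2\psi^n\}$ the log-dissipation gives only an $L^1$-type (at best $L\log L$) bound on $\MA$. Moreover, extracting a limit along $t_j\to\infty$ needs the $C^2$ estimates of Section \ref{mu flow section}, whose constants depend on $\sup_{Q_{t_j}}|u|$. So you need an $L^\infty$ bound uniform in $t$, not just at good times.

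The paper gets this in Step 5 by a blow-up argument. At good times near new maxima of $M_t$, the bounds $|u_t|\le C(1+N_t)$ and $|\nabla u|\le C(1+N_t^p)$ produce a ball of radius $\sim M^{1-p}$ on which $u\le -M/2$. This contradicts the $L^q$ bound coming from \eqref{key uniform estimate}. It again needs the $\mu$-flow: for $\mu=\log$, Corollary \ref{MA estimate} only gives $\MA\le e^{CM}$, so the gradient bound becomes exponential in $M$ and the ball is too small. With the $\mu$-flow and the truncation, your Ko{\l}odziej route could plausibly be repaired instead, since the dissipation then also bounds $\int_{\{\MA\ge 2\psi^n\}}\MA^{1+1/p}\omega^n$.

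Only after reaching a genuine stationary solution does the paper apply Moser--Trudinger (where $\sigma<2n$ enters) and Ko{\l}odziej to get bounds uniform in $a,\delta,m$. It then fixes $m$ above that bound to remove the truncation.
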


While there are many results studying convex Euler--Lagrange functionals for the complex Monge--Amp\`ere equation, this seems to be first that we are aware of which explicitly studies saddle point solutions.\\

Let us briefly discuss the assumptions in Theorem \ref{superlinear_theorem_intro}, in reverse order. By using the integral form of Gr\"onwall's Inequality, condition \eqref{eq:integration growth upper intro} implies that:
\begin{equation}\label{eq:superlinear condition intro}
\lim_{x\>-\infty}\frac{\psi(z,x)}{|x|} = \infty \quad \text{uniformly for } z\in \ov{\Omega}.
\end{equation}
In this sense, $\psi$ exhibits ``superlinear" growth, as discussed in the introduction to \cite{CW01}.

We turn to condition \eqref{eq:exp growth intro}. For the real Monge--Amp\`ere operator \cite[Theorem 1.2 when $k=n$]{CW01}, it is not necessary to restrict the growth of $\psi$; this follows from the Hessian--Sobolev inequalities in \cite{CW01}, which bound the $L^\infty$ norm of a $k$-convex function in terms of the energy. For the complex Monge--Amp\`ere operator, such a strong inequality is not possible; the best is the sharp Moser--Trudinger inequality \cite{ WWZ20,DNGL21,BB22}. This is directly analogous to the case when $k = \frac{n}{2}$ in \cite{CW01}; indeed, the Moser--Trudinger inequality is one of several well-known similarities between the complex Monge--Amp\`ere operator and the real $\frac{n}{2}$-Hessian operator, see e.g. \cite[Page 1737]{Blo05} for similar remarks. As such, it is most natural to compare our Theorem \ref{superlinear_theorem_intro} with the case of $k = \frac{n}{2}$ in \cite[Theorem 1.2]{CW01}, rather than the case of $k = n$. The condition in \cite{CW01} corresponding to \eqref{eq:exp growth intro} is then \cite[(1.5)]{CW01}, which is
\[
\lim_{x\>-\infty}\frac{\psi(z,x)}{|x|^{p}}=0 \quad \text{uniformly for } z\in \ov{\Omega}
\]
for some large $p>0$. Compared to this, \eqref{eq:exp growth intro} is weaker, and seems more natural.

Lastly, condition \eqref{eq:growth cond go to 0- intro} is again used to ensure the found solution is non-zero; compared to the sublinear case, sharpness of this condition is less clear. Note that it has the side effect of making \eqref{eq:initial equation} degenerate at the boundary of $\Omega$. Loosening this condition would likely require imposing stronger bounds on the growth of $\psi$ -- for instance, it is known that the volume-normalized K\"ahler--Einstein equation:
\begin{equation*}
(\ddbar u)^n = \frac{e^{-\gamma u}\omega^n}{\int_\Omega e^{-\gamma u} \omega^n}
\end{equation*}
can only be solved for $\gamma < 2n$ \cite{BB22, GKY13}. Not coincidentally, this upper-bound is the same as the optimal constant in the Moser--Trudinger inequality.

Beyond the above technical questions, it would also be interesting to extend our results to singular spaces; by using Hironaka's theorem, this amounts to allowing $\psi$ to blow-up at certain points in $\Omega$. We mention the recent work \cite{HK25}, which studies a case of this for the real $k$-Hessian equations.\\

The rest of this paper is organized as follows. In Section \ref{Preliminary Section}, we collect some preliminary facts we will need in the later sections. In Section \ref{mu flow section}, we prove a priori estimates for the $\mu$-flow, with some additional details given in Appendix \ref{mu appendix}. In Section \ref{sublinear section}, we prove Theorem \ref{Sublinear Theorem}, and in Section \ref{superlinear section}, we prove Theorem \ref{superlinear_theorem_intro}.\\

{\noindent \bf Acknowledgments:} The authors would like to thank Mathew George for related discussions. The first-named author was partially supported by National Key R\&D Program of China 2024YFA1014800 and 2023YFA1009900, and NSFC grants 12471052 and 12271008.

\section{Preliminaries}\label{Preliminary Section}

Let $(\Omega^n,\omega)$ be a complex manifold with boundary, of complex dimension $n$, such that $\ov{\Omega}$ is compact and $\omega$ extends as a K\"ahler metric to a neighborhood of any point in $\p\Omega$ (when considered in a boundary coordinate chart, say). For instance, $\Omega$ may be an open (connected, with smooth boundary) relatively compact subset of a compact K\"ahler manifold without boundary.

\begin{definition}
\label{def:s-pseudoconvex}
We say that $\Omega$ is a \textit{strongly pseudoconvex manifold} if there exists a function $\rho\in \PSH(\Omega)\cap C^\infty(\ov{\Omega})$ such that
\begin{itemize}\setlength{\itemsep}{1mm}
\item $\rho<0$ in $\Omega$ and $\rho=0$ on $\p\Omega$;
\item $d\rho\not= 0$ on $\p\Omega$; and
\item $\ddbar \rho \geq\e_{0}\omega$ for some constant $\e_{0}> 0$.
\end{itemize}
\end{definition}

\begin{definition}
For $u\in\PSH(\Omega)\cap C^{2}(\ov{\Omega})$, we define the (complex) Monge--Amp\`ere operator to be
\[
\MA(u) = \frac{(\ddbar u)^{n}}{\omega^{n}}.
\]
\end{definition}

The eigenvalue problem for the Monge--Amp\`ere operator on a strong psuedoconvex domain in $\CC^n$ was discussed in \cite{BZ23}, where it was shown that there exists a unique pair $(u_1, \lambda_1) \in \mathcal{H}(\Omega) \times \RR_+$ such that:
\begin{equation}\label{MA eval}
\begin{cases}
\MA(u_1) = (-\lambda_1 u_1)^n &\text{ in }\Omega\\
u_1 = 0 &\text{ on }\p\Omega\\
\inf_\Omega u_1 = -1.
\end{cases}
\end{equation}
Actually, it was shown that $u_1 \in C^{1,1-\e}(\ov{\Omega})$ for any $\e > 0$ sufficiently small. In \cite{CLM26}, this was improved slightly to allow $\Omega$ to be strongly pseudoconvex manifold, and it was shown that $u_1 \in C^{1,1}(\ov{\Omega})$. Recently, \cite{LZ25} have extended uniqueness of $u_1$ to hold in the finite energy class.

We refer to the unique pair $(u_1, \lambda_1)$ solving \eqref{MA eval} as the {\it first eigenfunction} and {\it first eigenvalue} of the Monge--Amp\`ere operator on $(\Omega, \omega)$.\\

Building on work of Chou \cite{Tso90}, the following Rayleigh quotient formula for $\lambda_1$ was established in \cite[Theorem 1.2]{BZ23} (their proof applies equally well to strongly pseudoconvex manifolds, essentially without change):
\begin{theorem}[Badiane--Zeriahi \cite{BZ23}]
We have:
\begin{equation}\label{eq:Rayleigh Quotient}
\lambda_1^n = \inf \left\{ \frac{E(u)}{I(u)}\ \middle|\ u\in\mathcal{H}(\Omega) \right\},
\end{equation}
where $E(u) = \frac{1}{n+1}\int_\Omega (-u)\MA(u) \omega^n$ is the Monge--Amp\`ere energy of $u$ and $I(u) := \frac{1}{n+1}\int_\Omega (-u)^{n+1}\omega^n$.
\end{theorem}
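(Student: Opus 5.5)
The plan is to prove the two inequalities separately. Write $\Lambda := \inf\{E(u)/I(u) : u\in\mathcal{H}(\Omega),\ u\not\equiv 0\}$.

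\emph{Upper bound $\Lambda\le\lambda_1^n$.} Test the quotient with the first eigenfunction $u_1\in\mathcal{H}(\Omega)$. Since $u_1\in C^{1,1}(\ov\Omega)$, we first check that $u_1$ is an admissible competitor; if $\mathcal{H}(\Omega)$ requires more interior regularity, we use the interior smoothness of $u_1$ or a standard smooth approximation, along which $E$ and $I$ are continuous. Substituting $\MA(u_1)=(-\lambda_1u_1)^n$ into the energy gives
\[
E(u_1)=\frac{1}{n+1}\int_\Omega(-u_1)\lambda_1^n(-u_1)^n\omega^n=\lambda_1^n I(u_1),
\]
so $\Lambda\le\lambda_1^n$.

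\emph{Lower bound $E(v)\ge\lambda_1^n I(v)$ for every $v\in\mathcal{H}(\Omega)$.} Both sides are $(n+1)$-homogeneous, so it suffices to prove the inequality after rescaling $v$. We use the mixed Monge--Amp\`ere (Hölder-type) inequality for functions vanishing on $\p\Omega$,
\[
\int_\Omega(-v)(\ddbar u)^n \le \Big(\int_\Omega(-v)(\ddbar v)^n\Big)^{\frac{1}{n+1}}\Big(\int_\Omega(-u)(\ddbar u)^n\Big)^{\frac{n}{n+1}}.
\]
This follows by integrating by parts repeatedly, which is justified since the boundary terms vanish because $u=v=0$ on $\p\Omega$, and then applying Cauchy--Schwarz to the bilinear forms $\int d(-u)\wedge d^c(-v)\wedge T$.

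Apply this with $u=u_1$. The left side equals $\lambda_1^n\int(-v)(-u_1)^n\omega^n$, and the last factor is $\big(\lambda_1^n\int(-u_1)^{n+1}\omega^n\big)^{n/(n+1)}$. This gives an inequality between $E(v)$ and $\int(-v)(-u_1)^n$, but it is not yet of the required form, since it pairs $v$ with $u_1$ rather than with $v$ itself. To reach $\int(-v)^{n+1}$ we need a variational argument instead. The route follows Tso/BZ23: minimize $E$ on the constraint set $\{I=1\}$ in the finite-energy class $\mathcal{E}_1$. A minimizer exists by compactness, using the Moser--Trudinger/Skoda compactness of $\{E\le C\}$ in $L^{n+1}$ and upper semicontinuity of $-E$. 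By the Lagrange multiplier argument, which requires differentiability of $E$ and $I$ along the projection of $u+tf$ onto psh functions, valid via the orthogonality relation of Berman--Boucksom type, the minimizer $w$ satisfies $\MA(w)=\Lambda(-w)^n$ weakly. Uniqueness of the eigenpair in $\mathcal{E}_1$ (LZ25, or the BZ23 comparison argument) then forces $\Lambda=\lambda_1^n$ and $w=c\,u_1$.

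\emph{Main obstacle.} The hard step is the Lagrange multiplier and differentiation step in the finite-energy class. It needs the envelope differentiability theorem, and it also needs $w$ to be nontrivial with enough regularity to invoke uniqueness. We would handle the latter by a Kołodziej $L^\infty$ estimate, since the right side $\Lambda(-w)^n$ lies in $L^p$, followed by BZ23's uniqueness.
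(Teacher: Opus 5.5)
The paper gives no argument of its own for this statement. It imports the formula from \cite{BZ23}, where it is proved over the finite energy class and asserted to carry over to strongly pseudoconvex manifolds. The only thing the paper records is $E(u_1)=\lambda_1^n I(u_1)$, which is exactly your upper bound. Your lower bound is therefore a reconstruction of the cited result, and as an outline it is sound:
\begin{itemize}
\item Normalize $I=1$ in $\cE^1(\Omega)$.
\item Extract a minimizer $w$ using compactness of energy sublevel sets, Moser--Trudinger uniform integrability (so that $I$ passes to the limit), and lower semicontinuity of $E$.
\item Obtain $(\ddbar w)^n=\Lambda(-w)^n\omega^n$ by noting that $t\mapsto E(P(w+tf))-\Lambda I(w+tf)$ is minimized at $t=0$. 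Here take $f\in C^\infty_c(\Omega)$ and $|t|$ small, so that $w+tf\le 0$; then $P(w+tf)\le w+tf$ gives $I(P(w+tf))\ge I(w+tf)$.
\end{itemize}
What this route buys is a proof using only general pluripotential tools. What it costs is that each of those tools (Cegrell classes, semicontinuity of $E$, differentiability of $E\circ P$) must be transplanted to manifolds with boundary. That is the same ``essentially without change'' claim the paper makes about \cite{BZ23}.

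Two small repairs are needed. First, you define the infimum over $\cH(\Omega)$ but minimize over $\cE^1(\Omega)$. The chain should read $\lambda_1^n=\inf_{\cE^1}E/I\le\inf_{\cH}E/I\le E(u_1)/I(u_1)=\lambda_1^n$, which uses that finite-energy elements of $\cH(\Omega)$ lie in $\cE^1(\Omega)$. Second, the mixed-inequality paragraph before the variational argument is a dead end and should be cut.

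The step that needs care is the identification $\Lambda=\lambda_1^n$. Citing \cite{LZ25} for uniqueness in the finite energy class risks circularity. The finite-energy results there lean on the Badiane--Zeriahi Rayleigh formula: for example, Proposition~\ref{supersolution prop}, which the paper takes from \cite{LZ25}, is proved with it.

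You can close this step directly in $\cE^1(\Omega)$, with no Ko\l odziej bootstrap. Write $\Lambda=\lambda^n$. Integration by parts and the mixed Monge--Amp\`ere inequality $\ddbar u_1\wedge(\ddbar w)^{n-1}\ge\lambda_1\lambda^{n-1}(-u_1)(-w)^{n-1}\omega^n$ (valid in Cegrell's classes) give
\[
\lambda^n\int_\Omega(-u_1)(-w)^n\omega^n=\int_\Omega(-w)\,\ddbar u_1\wedge(\ddbar w)^{n-1}\ge\lambda_1\lambda^{n-1}\int_\Omega(-u_1)(-w)^n\omega^n .
\]
The integral is positive and finite, so $\lambda\ge\lambda_1$. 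Exchanging the roles of $u_1$ and $w$ gives $\lambda\le\lambda_1$.
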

It is clear from \eqref{MA eval} that $E(u_1) = \lambda_1^n I(u_1)$, so the infimum is always attained. Badiane--Zeriahi actually show that the result holds if the infimum is taken over all $u$ in the finite energy class, but we will not need this here.\\

We will also need the following strong rigidity, given essentially in \cite[Lemma 3.4]{LZ25}:
\begin{proposition}\label{supersolution prop}
Suppose that $u\in\mathcal{H}(\Omega)$ satisfies $\MA(u) \leq (-\mu(z) u)^n$ for a non-negative function $\mu$ with $\sup_\Omega \mu < \lambda_1$. Then $u = 0$.
\end{proposition}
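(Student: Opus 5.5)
The plan is to compare the two sides of the Rayleigh quotient formula \eqref{eq:Rayleigh Quotient}. The hypothesis bounds the Monge--Amp\`ere energy $E(u)$ from above by a multiple of $I(u)$ strictly smaller than $\lambda_1^n$, while the Badiane--Zeriahi formula bounds $E(u)$ from below by $\lambda_1^n I(u)$. Together these force $I(u)=0$.

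First, I will record that $u\le 0$ in $\Omega$. This holds because $u$ is plurisubharmonic, continuous on $\bar\Omega$, and vanishes on $\p\Omega$, so the maximum principle applies. Consequently $-u\ge 0$ and $(-\mu u)^n=\mu^n(-u)^n\ge 0$. The hypothesis then gives $0\le \MA(u)\le (\sup_\Omega\mu)^n(-u)^n$. The right-hand side is bounded because $u\in C^0(\bar\Omega)$, so all the integrals below are finite. Multiplying by $-u\ge 0$ and integrating gives
\[
E(u)=\frac{1}{n+1}\int_\Omega(-u)\MA(u)\,\omega^n\le \frac{(\sup_\Omega\mu)^n}{n+1}\int_\Omega(-u)^{n+1}\omega^n=(\sup_\Omega\mu)^n I(u).
\]

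Second, since $u\in\mathcal H(\Omega)$, the Rayleigh quotient formula \eqref{eq:Rayleigh Quotient} gives $E(u)\ge\lambda_1^n I(u)$. (If $I(u)=0$ there is nothing to prove; otherwise the quotient is defined.) Combining the two inequalities yields
\[
\bigl(\lambda_1^n-(\sup_\Omega\mu)^n\bigr)I(u)\le 0.
\]
Since $0\le\sup_\Omega\mu<\lambda_1$, the coefficient is strictly positive, so $I(u)=0$. Hence $(-u)^{n+1}$ is a continuous nonnegative function with zero integral against the smooth volume form $\omega^n$, so $u\equiv 0$.

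The only point needing care is the justification that \eqref{eq:Rayleigh Quotient} applies to $u$, i.e.\ that $u$ lies in the class over which the infimum is taken. This is given directly, since the statement assumes $u\in\mathcal H(\Omega)$ and the formula is stated over $\mathcal H(\Omega)$. A second small point is whether $E(u)$ should be understood in the sense of the pointwise $C^2$ Monge--Amp\`ere measure in the interior. I expect no real obstacle here, since $\MA(u)$ is bounded by the hypothesis and $u$ is bounded and continuous.
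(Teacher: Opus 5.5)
Your proposal is correct and follows the same route as the paper. Integrate the hypothesis against $-u\ge 0$ to get $E(u)\le(\sup_\Omega\mu)^n I(u)$, then combine this with the Rayleigh quotient bound $E(u)\ge\lambda_1^n I(u)$ to force $I(u)=0$, hence $u\equiv 0$.
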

\begin{proof}
Integrating and applying the Rayleigh quotient formula gives:
\[
E(u) \leq (\sup_\Omega \mu)^n I(u) \leq \lambda_1^n I(u) \leq E(u);
\]
since $\sup_\Omega \mu^n < \lambda_1^n$, we must have $E(u) = I(u) = 0$.
\end{proof}

\medskip

An important tool we will need is the optimal Moser--Trudinger Inequality, originally shown in \cite{BB22}. There are now several variants on this inequality in slightly different contexts, c.f. \cite{GKY13, BB14, AC19, Ceg19, WWZ20, DMV23, DNGL21, GT25} for a non-exhaustive list.
\begin{proposition}(\textup{Moser--Trudinger inequality})\label{prop:MT_inequ}
	Let $0<\gamma<2n$. Then there exists a $C_\gamma>0$ such that
\begin{align}
\label{eq:MT_ineq}
	\int_\Omega \exp\left(\gamma \frac{|u|^{1+\frac{1}{n}}}{E(u)^{\frac{1}{n}}}\right)\omega^n
	\leq C_\gamma,
\end{align}
for all non-constant $u\in\cE^1(\Omega)$, the finite energy class.
\end{proposition}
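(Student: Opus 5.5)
The plan is to reduce the statement to the sharp Moser--Trudinger inequality of Berman--Berndtsson \cite{BB22} (in the form of \cite{DNGL21, WWZ20}) by two normalizations, and to redo the part of their argument that uses the flat structure of a domain in $\CC^n$.

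\emph{Scaling and approximation.} First I would note that the quantity in the exponent is scale invariant. Since $E(tu)=t^{n+1}E(u)$ for $t>0$, the ratio $|u|^{1+\frac1n}/E(u)^{\frac1n}$ does not change under $u\mapsto tu$. So it suffices to prove $\int_\Omega \exp(\gamma|u|^{1+\frac1n})\omega^n\le C_\gamma$ for all $u\in\cE^1(\Omega)$ with $E(u)=1$. Next I would approximate a general $u\in\cE^1(\Omega)$ by a decreasing sequence $u_j\in\cH(\Omega)$ that is continuous up to the boundary. This is Cegrell's approximation, which uses the exhaustion $\rho$ of Definition~\ref{def:s-pseudoconvex}. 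Along this sequence $E(u_j)\to E(u)$. The integrands increase to the limit integrand, so monotone convergence passes the bound to $u$. This reduces everything to smooth bounded $u$ with $E(u)=1$.

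\emph{Distribution function.} By the layer-cake formula,
\[
\int_\Omega e^{\gamma|u|^{1+\frac1n}}\omega^n=\Vol(\Omega)+\int_0^\infty \gamma\left(1+\tfrac1n\right)s^{\frac1n}e^{\gamma s^{1+\frac1n}}\Vol(\{u<-s\})\,ds .
\]
So it is enough to show
\[
\Vol(\{u<-s\})\le C_\delta\,e^{-(2n-\delta)s^{1+\frac1n}}\qquad\text{for every }\delta>0 \text{ when } E(u)=1 .
\]
I would aim for this through two estimates.

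1. \emph{Capacity of sublevel sets.} For the relative Monge--Amp\`ere capacity, the comparison principle gives $\mathrm{Cap}(\{u<-s\})\lesssim s^{-(n+1)}\int(-u)(\ddbar u)^n$. The constant here must be sharpened. To do so I would compare $u/s$ with the relative extremal function of $\{u<-s\}$ and use the energy monotonicity $E(\max(u,-s))\le E(u)$.

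2. \emph{Volume versus capacity.} I would use an Alexander--Taylor type bound $\Vol(K)\le C\exp(-2n\,\mathrm{Cap}(K)^{-1/n})$ with the sharp constant $2n$. On $\Omega$ this would be proved by localizing in finitely many coordinate balls in which $\omega$ is comparable to the Euclidean metric. In each ball I would compare with the pluricomplex Green function, whose Lelong number gives exactly the exponent $2n$.

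\emph{Main obstacle.} I expect the sharpness of the constant to be the hardest step. The naive chain in steps 1 and 2 loses a factor $(n+1)^{1/n}$, which only yields the inequality for $\gamma<2n/(n+1)^{1/n}$. If the capacity route cannot close this gap, my fallback is the Berman--Berndtsson approach itself. Their sharp inequality is equivalent to coercivity of the Ding-type functional $\frac{1}{\gamma}\log\int e^{-\gamma u}\omega^n - E(u)^{\ldots}$ along the scaling family. They prove this using convexity of $u\mapsto-\log\int e^{-\gamma u}\omega^n$ along psh geodesics (Berndtsson's theorem), together with linearity of $E$ along geodesics. The only domain-specific input there is an estimate of the integrability threshold at $\gamma=2n$. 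Their proof transfers to strongly pseudoconvex manifolds once one has geodesics in $\cE^1(\Omega)$ with zero boundary values and a local $\log$-integrability bound, which holds in each chart since $\omega$ is locally equivalent to the flat metric. Compactness of $\ov\Omega$ then makes the constant $C_\gamma$ uniform.
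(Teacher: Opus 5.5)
Your skeleton is the paper's: scale invariance of $|u|^{1+\frac1n}/E(u)^{\frac1n}$, reduction to $\cH(\Omega)$, the layer-cake formula, and a distribution bound $\Vol(\{u<-s\})\le C_\beta\exp(-\beta s^{1+\frac1n}/E(u)^{\frac1n})$ for $\beta<2n$. The paper takes that bound from the local estimate of \cite[Proposition 6.1]{ACKPZ09}, applied on a finite cover as in \cite{DNGL21}.

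The gap is that your proposal never establishes this bound. By your own computation the capacity route stops at $2n/(n+1)^{1/n}$. The proposed sharpening of step 1 is not an argument. The Berman--Berndtsson fallback is asserted rather than carried out, and on a manifold it is not a routine transfer: Berndtsson's convexity concerns $\int e^{-\phi}$ against Lebesgue measure (or $|h|^2$ for a holomorphic $n$-form $h$), not against $\omega^n$. You would also need geodesics in $\cE^1(\Omega)$ with zero boundary values.

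Your diagnosis of the obstacle is wrong, and once the constants are tracked correctly your first route closes.

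\emph{Step 1 cannot be sharpened.} The bound $\mathrm{Cap}(\{u<-s\})\le s^{-(n+1)}\int_\Omega(-u)(\ddbar u)^n$ with constant $1$ is already optimal. For $u_R=\max(\log|z|,-R)$ on the unit ball, the ratio of the two sides is $s/R$, which tends to $1$ as $s\to R$.

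\emph{The discrepancy is the normalization in step 2.} The constant $2n$ is sharp when capacity is computed with $\frac{\sqrt{-1}}{\pi}\partial\overline{\partial}$, the normalization in which $\log|z|$ has Monge--Amp\`ere mass $1$ at the origin. If $\ddbar$ meant that operator, the proposition would be false for $\gamma>2n/(n+1)^{1/n}$. Indeed, on $\{|z|<e^{-R}\}$ the integrand for $u_R$ equals $e^{\gamma(n+1)^{1/n}R}$ while the volume is $\asymp e^{-2nR}$. So no argument, BB22 included, could close the gap you describe.

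With the paper's $\ddbar=\sqrt{-1}\partial\overline{\partial}$, every capacity is multiplied by $\pi^n$. Hence the volume--capacity inequality holds with any constant below $2n\pi$, and chaining it with step 1 gives
\[
\Vol(\{u<-s\})\le C_\beta\exp\Big(-\beta\,\frac{s^{1+\frac1n}}{E(u)^{\frac1n}}\Big)\qquad\text{for every }\beta<\frac{2n\pi}{(n+1)^{1/n}}.
\]
Since $(n+1)^{1/n}\le 2<\pi$, this covers every $\beta<2n$.

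So you can drop both the sharpening and the fallback. The one deep input is the sharp volume--capacity estimate itself. It amounts to uniform integrability of $e^{-\beta\varphi}$, $\beta<2n$, over $\varphi$ with zero boundary values and $\frac{\sqrt{-1}}{\pi}$-Monge--Amp\`ere mass at most $1$. Comparison with a Green function alone does not prove it, so cite it, as the paper does through \cite{ACKPZ09}.

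A smaller point: in your approximation step the integrands need not increase, because $E(u_j)$ increases together with $|u_j|$. Use Fatou's lemma instead of monotone convergence.
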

\begin{proof}
We briefly recall (a special case of) the argument of Di Nezza--Guedj--Lu \cite[Theorem 4.6]{DNGL21}. By approximation, it suffices to assume $u\in \mathcal{H}(\Omega)$ with $\int_\Omega \MA(u) < \infty$.
By using a finite cover, one can apply the local estimate in \cite[Proposition 6.1]{ACKPZ09}, as done in \cite[Theorem 4.6]{DNGL21}, to see that for any $\beta \in (\gamma, 2n)$, there exists a constant $C = C(\Omega,\omega,\beta) > 0$ such that:
\[
\mathrm{Vol}(\{u < -t\}) \leq C\, \mathrm{exp}\left(-\frac{\beta t^{1+\frac{1}{n}}}{E(u)^{\frac{1}{n}}}\right),
\]
for any $t > 0$.

It then follows that:
\[
\int_\Omega\mathrm{exp}\left(\gamma\frac{\abs{u}^{1+\frac{1}{n}}}{E(u)^{\frac{1}{n}}}\right) \omega^n = \int_0^\infty \left(1+\frac{1}{n}\right) \gamma t^{\frac{1}{n}} \mathrm{exp}\left(\gamma t^{1+\frac{1}{n}}\right) \mathrm{Vol}\left(\left\{\frac{u}{E(u)^{\frac{1}{n+1}}} < -t\right\}\right)\,dt
\]
\[
\leq C \int_0^\infty t^{\frac{1}{n}} \mathrm{exp}\left(\left(\gamma - \beta\right) t^{1+\frac{1}{n}}\right)\, dt < \infty,
\]
since $E(cu) = c^{n+1} E(u)$ for any $c > 0$.
\end{proof}

We frequently make use of the following corollary:

\begin{corollary}\label{cor:hessianSob}
    For any $p>1$ and $\gamma \in (0, 2n)$, there exists a constant $C > 0$, depending only on $n, p$, and $\gamma$, such that
    \begin{align}    \label{eq:Sobolev ineq}
        \|u\|_{L^p(\Omega,\omega^n)}  \leq C E(u)^{\frac{1}{n+1}},
    \end{align}
    for all $u\in\cH(\Omega)$.
\end{corollary}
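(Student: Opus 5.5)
The plan is to deduce \eqref{eq:Sobolev ineq} directly from the Moser--Trudinger inequality, Proposition~\ref{prop:MT_inequ}, using homogeneity to reduce to the normalized case. The constant will depend on $\Omega,\omega$ as well, through $C_\gamma$ and the volume, though the statement records only $n,p,\gamma$.

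First dispose of the trivial case. If $E(u)=0$ then $\int_\Omega(-u)(\ddbar u)^n=0$; by the Rayleigh quotient formula \eqref{eq:Rayleigh Quotient}, $I(u)\le \lambda_1^{-n}E(u)=0$, so $u\equiv 0$ and there is nothing to prove. So assume $E(u)>0$. Since $E(cu)=c^{n+1}E(u)$ and both sides of \eqref{eq:Sobolev ineq} are homogeneous of degree one in $u$, we may normalize $E(u)=1$. It then suffices to show $\|u\|_{L^p}\le C$.

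With $E(u)=1$, Proposition~\ref{prop:MT_inequ} gives
\[
\int_\Omega \exp\left(\gamma |u|^{1+\frac1n}\right)\omega^n\le C_\gamma .
\]
Next use the elementary pointwise bound: for any $p>1$ there is $C_{p,\gamma,n}$ with
\[
t^p\le C_{p,\gamma,n}\exp\left(\gamma t^{1+\frac1n}\right), \qquad t\ge 0.
\]
This holds because $t^p e^{-\gamma t^{1+1/n}}$ is continuous and tends to $0$ as $t\to\infty$, so its supremum is finite and can be computed explicitly. Applying it with $t=|u|$ and integrating gives
\[
\int_\Omega |u|^p\,\omega^n\le C_{p,\gamma,n}\,C_\gamma .
\]
Taking $p$-th roots and undoing the normalization yields \eqref{eq:Sobolev ineq}.

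There is no real obstacle here. The only points needing care are the homogeneity scaling and the degenerate case $E(u)=0$, since the Moser--Trudinger inequality is stated only for non-constant $u$; that case is handled above.
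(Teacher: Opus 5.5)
Your proof is correct and is essentially the paper's argument: apply the elementary bound $x^p\le C e^{\gamma x^{1+\frac{1}{n}}}$ to $|u|/E(u)^{\frac{1}{n+1}}$ and integrate using Proposition~\ref{prop:MT_inequ}. The paper leaves three minor points implicit that you make explicit: the homogeneity normalization, the separate treatment of $E(u)=0$ via the Rayleigh quotient, and the fact that the constant also depends on $(\Omega,\omega)$ through $C_\gamma$.
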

\begin{proof}
By elementary calculus, there exists $C > 0$ depending only on $n, p,$ and $\gamma$, such that
\[
x^p \leq Ce^{\gamma x^{1+\frac{1}{n}}}\text{ for all }x \geq 0.
\]
It follows that:
\[
\int_\Omega \frac{\abs{u}^p}{E(u)^{\frac{p}{n+1}}}\omega^n \leq C \int_\Omega \mathrm{exp}\left(\gamma \frac{\abs{u}^{1+\frac{1}{n}}}{E(u)^{\frac{1}{n}}}\right)\omega^n;
\]
the proposition follows immediately from applying Proposition~\ref{prop:MT_inequ}.\qedhere
\end{proof}

We conclude this section by recalling the standard existence and regularity result for the Dirichlet problem:

\begin{theorem}[\textup{\cite{PSS12}}]
\label{thm:C3alpha-estimate}
	 Suppose that $\psi\in C^{1,1}(\bar{\Omega\times\RR_-})$, with $\psi(z,x)\geq\psi_0>0$, and that $u \in \mathcal{H}(\Omega)$ is a solution to \eqref{eq:initial equation}. Then there exists a constant $C_1>0$, depending only on $n, \Omega$, $M_0\coloneqq\sup_\Omega|u|$, and $\|\psi\|_{C^{1,1}(\bar\Omega\times[-M_0,0])}$, such that
	\begin{align*}
		\|u\|_{C^{2}(\bar\Omega)}
		\leq C_1.
	\end{align*}
    In particular, $C_1$ is independent of $\psi_0$. Furthermore, there exists a constant $C_2$, depending only on $\psi_0$ and the quantities above, such that
    \begin{align*}
        \|u\|_{C^{3,\alpha}(\bar\Omega)} \leq C_2.
    \end{align*}
\end{theorem}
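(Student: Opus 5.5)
This is the standard a priori estimate theory of Phong--Song--Sturm \cite{PSS12}, so the plan is to reduce to their framework and track the dependence of the constants. First, I will get an $L^\infty$ bound on the right-hand side. Since $\sup_\Omega|u|\le M_0$, only the values of $\psi$ on $\bar\Omega\times[-M_0,0]$ enter. I will therefore set $f(z)=\psi^n(z,u(z))$ and regard \eqref{eq:initial equation} as $\MA(u)=f$ with $f^{1/n}=\psi(z,u)$.

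Next comes the $C^0$ and boundary $C^1$ bounds. For the lower barrier I take $\underline u = A\rho$ with $A$ large depending on $\sup\psi$ on $[-M_0,0]$, so that $\MA(A\rho)\ge A^n\e_0^n\ge \MA(u)$; the comparison principle then gives $A\rho\le u\le 0$. This controls $|\nabla u|$ on $\p\Omega$, and the global gradient bound follows by a Błocki-type maximum principle argument applied to $\log|\nabla u|^2-\phi(u)$.

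For second derivatives I follow the Guan/Phong--Song--Sturm scheme, the key point being that the constant must not depend on $\psi_0$. The crucial structural feature is that the equation is written for $\psi$ rather than $\psi^n$, and $\psi=f^{1/n}$ is $C^{1,1}$. In the interior, I run a global $C^2$ estimate on $\Delta u$ or $\lambda_{\max}$: differentiating $\log\det(u_{i\bar j})=n\log\psi$ twice, the bad term $n\,\p\bar\p\log\psi$ appears. By Błocki's trick, $(\det u_{i\bar j})^{1/n}=\psi$ is concave in $u_{i\bar j}$, so differentiating in that form only requires $\psi\in C^{1,1}$ with no lower bound; the terms involving $u$ inside $\psi$ are handled using the gradient bound. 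At the boundary, the tangential-tangential bounds come from $u=0$. The tangential-normal bounds come from Guan's barrier built from $\rho$ and $u$, applied to the linearized operator of $\psi$ so that the result is uniform in $\psi_0$. The normal-normal bound follows as in Caffarelli--Kohn--Nirenberg--Spruck/Guan using strong pseudoconvexity of $\rho$. On a manifold, I work in local charts near $\p\Omega$, with curvature terms absorbed by $\omega$. This step is the main obstacle; concretely, I will work with the operator $\det(u_{i\bar j})^{1/n}$ and follow \cite{PSS12}, which treats exactly degenerate right-hand sides with $f^{1/n}\in C^{1,1}$.

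Finally, with $\psi\ge\psi_0$ the equation is uniformly elliptic given the $C^2$ bound. Because $\MA(u)\ge\psi_0^n$ and the eigenvalues are bounded above, they are also bounded below. Concavity of $\log\det$ then allows the Evans--Krylov theorem, in its interior and boundary (Krylov) versions, to give $C^{2,\alpha}$ with $\alpha$ and the constant depending on $\psi_0$. Schauder theory, applied after differentiating the equation with $\psi\in C^{1,1}$, upgrades this to $C^{3,\alpha}$.
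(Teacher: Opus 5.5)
The paper gives no argument for this statement beyond citing \cite{PSS12}. Your overall plan, reducing to the Guan/Phong--Song--Sturm a priori theory for the concave operator $(\det u_{i\bar j})^{1/n}$, is therefore the same route. However, one step you do spell out is wrong, and it is exactly where the $\psi_0$-independence is decided.

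Freezing $f(z)=\psi^n(z,u(z))$ and quoting an estimate for $f^{1/n}\in C^{1,1}$ is circular, because $\|\psi(\cdot,u(\cdot))\|_{C^{1,1}}$ involves $\|u\|_{C^{1,1}}$. Your remedy, that the $u$-dependence is ``handled using the gradient bound'', misses the term $\partial_x\psi\,\nabla^2u$ in $\nabla^2[\psi(z,u(z))]$. In the regime of this paper $\partial_x\psi$ is typically negative (e.g.\ $\psi(z,x)=-\lambda x$). A B\l ocki/Guan-type argument that maximizes $\Delta u+A\rho$ (or $u_{\xi\xi}+A\rho$) using only concavity then gives
$0\ge -C-|\partial_x\psi|\,\Delta u+A\ve_0\sum_iF^{i\bar i}$, and this does not close. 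AM--GM only gives $\sum_iF^{i\bar i}\ge 1$, and for $n\ge 3$ this sum can grow as slowly as $(\Delta u)^{1/(n-1)}$. On a curved $(\Omega,\omega)$ the commutator terms $-C\,\Delta u\sum_iF^{i\bar i}$ cause the same failure even when $\psi$ does not depend on $u$.

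The repair is to use a logarithmic quantity, e.g.\ $\log\lambda_{\max}(\ddbar u)+A\rho$, following the computation of Proposition~\ref{complex Hessian estimate} with $\mu(t)=t^{1/n}$ and no $u_t$ term. Then:
\begin{itemize}
\item the term $\partial_x\psi\,u_{1\bar1}$ becomes $O(1)$ after dividing by $u_{1\bar1}$;
\item the third-order terms are handled by concavity and the first-order condition alone;
\item at the last step, the $\psi_0$-dependent bound $\mu'\cdot\MA\ge C^{-1}$ must be replaced by $\sum_iF^{i\bar i}\ge1$ together with $F^{1\bar1}\le C/u_{1\bar1}$.
\end{itemize}

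Even after this repair you only control the complex Hessian. The two places where a $\psi_0$-uniform $C^2$ bound is genuinely delicate are the ones you pass over.
\begin{itemize}
\item \textbf{Boundary.} The Caffarelli--Kohn--Nirenberg--Spruck double-normal argument (as in Step 3 of Lemma~\ref{boundary Hessian estimate}) needs the tangential Hessian $u_{i\bar j}(0)=-u_{x_{2n}}(0)\rho_{i\bar j}(0)$, $i,j<n$, to be bounded below. That is, it needs a lower bound on $-u_{x_{2n}}(0)$, which comes from $u\le\rho/A$ and hence from a lower bound on $\MA(u)$. Strong pseudoconvexity alone does not supply it.
\item \textbf{Interior.} The passage from $\ddbar u$ to the full real Hessian claimed in the statement, as done in Proposition~\ref{Hessian estimate}, uses $F^{i\bar i}\ge C^{-1}$, again a $\psi_0$-dependent fact. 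A uniform bound needs a further idea, e.g.\ a maximum principle for the largest eigenvalue of $\nabla^2u$ as in Chu--Tosatti--Weinkove.
\end{itemize}

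The paper also imports these estimates from \cite{PSS12} without proof, so you are no worse off than the paper here. You should, however, say plainly that they are imported, and from a result that tracks the $u$-dependence of $\psi$ (with $\partial_x\psi\,\nabla^2u$ entering linearly and absorbed as above) rather than one for a fixed $f^{1/n}\in C^{1,1}$. Your gradient estimate and your final Evans--Krylov/Schauder step for the $C^{3,\alpha}$ bound are fine.
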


\section{The $\mu$-flow}\label{mu flow section}

In this section, we investigate certain flows for the complex Monge--Amp\`ere equation which will be needed for the later sections. We prove {\it a priori} estimates for solutions, which imply the long time existence by standard parabolic theory when the $C^0$-norm is under control, and prove a stability result.\\

We set notation. Let $(\Omega,\omega)$ again be a strongly pesudoconvex $n$-dimensional K\"ahler manifold with boundary $\p \Omega$. For any $T>0$, we introduce the cylindrical domains:
\[
Q_T = \Omega\times(0,T), \ \
\Gamma_{T} = \de\Omega\times(0,T), \ \
\Omega_{t}=\{(z,t)\in Q_{T}\}.
\]
Then the parabolic boundary of $Q_{T}$ can be written as
\[
\de^{*}Q_{T} = \de Q_{T}-\Omega_{T} = \Omega_{0} \cup \bar{\Gamma}_{T}.
\]
Fix $p \in (n+1, \infty]$. If $p < \infty$, then by Lemma \ref{construction of mu}, there exists a smooth one-variable function $\mu:(0,\infty)\to\mathbb{R}$ satisfying
\begin{itemize}\setlength{\itemsep}{1mm}
\item $\mu(t)=\log t$ for $0<t\leq1$;
\item $\mu(t)=t^{1/p}$ for $t\geq e^{2}$;
\item $\mu'(t)>0$;
\item $t^{2}\mu''(t)+t\mu'(t) \leq \frac{t}{p}\mu'(t)$.
\end{itemize}
If $p = \infty$, we set $\mu(t) = \log(t)$. Set $F(u)=\mu\big(\MA(u)\big)$ and consider the parabolic Dirichlet problem
\begin{equation}\label{parabolic Dirichlet problem Q T}
\begin{cases}
\ F(u)-u_{t} = f(z,t,u) & \mbox{in $Q_{T}$},  \\[1mm]
\ u = u_{0} & \mbox{on $\bar{\Omega}_{0}$}, \\[1mm]
\ u = 0 & \mbox{on $\bar{\Gamma}_{T}$},
\end{cases}
\end{equation}
where
\begin{itemize}\setlength{\itemsep}{1mm}
\item $f\in C^{\infty}(\bar{\Omega}\times\mathbb{R}\times \mathbb{R})$ satisfies
\[
|f|\leq K_{1}+K_{2}|u|, \ \ |f_{t}|+|f_{u}|+|\nabla f| \leq K_{3}
\]
for some constants $K_{1},K_{2},K_{3}\geq0$;
\item $u_{0}\in\PSH(\Omega)\cap C^\infty(\ov{\Omega})$ with $u_{0}=0$ on $\de\Omega$ and $\ddbar u_{0}\geq\delta_{0}\omega$ for some constant $\delta_{0}>0$;
\item $u(\cdot,t)$ is plurisubharmonic in $\Omega_{t}$ for each $t\in[0,T)$.
\end{itemize}

Our main result in this section is then:
\begin{theorem} \label{thm:estimate of flow}
Suppose further that the compatibility condition:
\begin{equation}\label{compatibility condition}
F(u_{0}) = f(z,0,u_{0}) \ \ \text{in $\de\Omega\times\{0\}$}
\end{equation}
holds. Then there exists a smooth solution $u$ of \eqref{parabolic Dirichlet problem Q T} such that for any positive even integer $k$,
\[
\|u\|_{C^{k,k/2}} \leq C,
\]
where $C$ is a constant depending only on $T$, $k$, $u_{0}$, $f$, $(\Omega,\omega)$. In particular, if $f$ is independent of $t$ and $K_{2}=0$, then the constant $C$ can be chosen independently of $T$.
\end{theorem}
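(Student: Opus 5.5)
We prove the a priori estimates in increasing order of regularity: $C^0$, then $u_t$, then $C^1$, then $C^2$ (interior and boundary). The standard Krylov--Safonov and Evans--Krylov parabolic theory then gives $C^{2+\alpha,1+\alpha/2}$, and Schauder bootstrapping gives all $C^{k,k/2}$ norms. Existence follows by the method of continuity (or short-time existence plus extension): the compatibility condition \eqref{compatibility condition} gives a smooth solution on a short interval, and the uniform estimates let us extend it up to $T$.

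\textbf{$C^0$ and $u_t$ bounds.} For the upper bound, $u(\cdot,t)$ is plurisubharmonic and vanishes on $\de\Omega$, so $u\le 0$ by the maximum principle. For the lower bound we use a subsolution of the form $\underline u = A\rho - Bt$ or $e^{Ct}A\rho$. Since $\ddbar\rho\ge\e_0\omega$, we have $F(\underline u)\ge \mu(A^n\e_0^n)$, which is large once $A$ is large. The growth $|f|\le K_1+K_2|u|$ is handled by an exponential-in-$t$ factor. This yields $|u|\le C(T)$, and when $K_2=0$ with $f$ time-independent the bound is uniform in $T$. For $u_t$, differentiate the equation in $t$: $\dot u=u_t$ satisfies the linear parabolic equation $\mu'(\MA)\,\MA\, \mathrm{tr}_{\ddbar u}\ddbar \dot u - \dot u_t = f_t+f_u \dot u$. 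The maximum principle, with $\dot u$ controlled at $t=0$ through $F(u_0)-f$ and on $\Gamma_T$ where $\dot u=0$, gives $|u_t|\le C e^{K_3 t}$. In the $T$-independent case we instead use the boundary gradient. The bound on $u_t$ gives two-sided control of $F(u)$, hence $\MA(u)$ is pinched between positive constants. This pinching is exactly what makes the equation uniformly parabolic in the $\mu$ variable; the choice $\mu=\log$ near $0$ prevents degeneracy.

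\textbf{Gradient and second derivatives.} On the boundary we build barriers from $\rho$ and a harmonic-type upper barrier (the solution of the Laplace equation, or $0$), which gives $|\nabla u|\le C$ on $\Gamma_T$. The interior gradient bound follows from a maximum-principle argument on $\log|\nabla u|^2+\phi(u)$, as in Błocki's or Guan's approach. For second derivatives on the boundary we follow Guan--Spruck/Caffarelli--Kohn--Nirenberg--Spruck and Boucksom's manifold version. Tangential--tangential derivatives come from $u=0$ on $\de\Omega$. Tangential--normal derivatives come from a barrier $A\rho + B|z|^2$ combined with the linearized operator. The double-normal derivative is then obtained from the equation together with strict pseudoconvexity and the lower bound on $\MA(u)$. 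For interior $C^2$ we apply the maximum principle to $\log\lambda_{\max}(\ddbar u)+\phi(|\nabla u|^2)+\psi(u)$, or to $\Delta u$. This is where the structural condition $t^2\mu''+t\mu'\le \frac tp\mu'$ enters: it supplies the concavity of $F$ needed to absorb the third-order terms. Concretely, writing $F=\mu(e^{\log \MA})$, the function $s\mapsto\mu(e^s)$ has second derivative $t^2\mu''+t\mu'\le0$ up to the $1/p$ slack, so $F$ is a concave function of $\log\det$, which is concave.

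\textbf{Main obstacle.} The main difficulty is the interior and boundary $C^2$ estimates. They require checking that the general concave $\mu$, with the specific range $p>n+1$ and the power behaviour $t^{1/p}$ at infinity, still gives uniform ellipticity and concavity constants with respect to the pinched $\MA$. We will treat $\mu$ through $G(s)=\mu(e^s)$, which is concave and increasing with $G'$ bounded above and below on the pinched range, and then reduce to the $\log$ case. Tracking the dependence on $T$ in every barrier confirms the final claim that the estimates are uniform in $T$ when $K_2=0$ and $f$ is autonomous.
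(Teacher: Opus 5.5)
For fixed $T$ your outline is essentially sound, modulo the corrections in the second paragraph. Your $u_t$ bound, from the maximum principle for the linearized equation $F^{i\bar j}\dot u_{i\bar j}-\dot u_t=f_t+f_u\dot u$, is even simpler than the paper's. But it only gives $|u_t|\le Ce^{(K_3+1)T}$, and this leaves a genuine gap in the last assertion of the theorem. When $f$ is autonomous and $K_2=0$, every later constant inherits this $T$-dependence: the pinching of $\MA(u)$, the gradient and $C^2$ bounds, and Evans--Krylov. Your remedy, \emph{use the boundary gradient}, is not an argument. $K_2=0$ puts no sign on $f_u$; in the intended applications $f=\log\psi^n$ is decreasing in $u$, which is exactly the sign for which $\dot u_t=F^{i\bar j}\dot u_{i\bar j}-f_u\dot u$ can grow. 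So no maximum principle applied to $\dot u$ alone yields a time-uniform bound.

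The missing idea is to divide by the positive function $2M\mp u$, with $M=\|u\|_{C^0}+1$, and exploit $u^{i\bar j}u_{i\bar j}=n$.
\begin{itemize}
\item Lower bound. Take an interior minimum of $G=u_t/(2M-u)$ where $\MA<1$, so $F=\log\MA$ there. Then $G_t\le0$ gives $u_t^2\le-u_{tt}(2M-u)$. Since $G_i=0$ kills the first-order terms, $(G_{i\bar j})\ge0$ gives $-u^{i\bar j}u_{ti\bar j}(2M-u)\le nu_t$. Substituting the $t$-differentiated equation yields $u_t^2\le nu_t+CM(1+|u_t|)$, hence $u_t\ge-CM$.
\item Upper bound. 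Maximize $u_t/(2M+u)$ where $\MA>e^2$, so $F=\MA^{1/p}$. The factor $\frac1p\MA^{1/p}=\frac1p(u_t+f)$ produces $\frac np u_t^2$ on the right, which is absorbed precisely because $p>n$.
\end{itemize}
This gives $|u_t|\le CM$ with $C$ independent of $T$ and of the sign of $f_u$. Together with your $T$-independent $C^0$ bound for $K_2=0$, this is what makes the final constant $T$-independent.

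Two further corrections. First, your concavity argument is wrong as stated. The map $G(s)=\mu(e^s)$ is not concave: it equals $e^{s/p}$ for $s\ge2$. So $F$ is not a concave function of $\log\det$, and there is no reduction to the $\log$ case. The structural condition only says $G''\le\frac1p G'$. Concavity of $F(\lambda)=G(\sum_i\log\lambda_i)$ comes instead from the strong concavity of $\log\det$:
\[
D^2F[\xi,\xi]=G''\Big(\sum_i\xi_i/\lambda_i\Big)^2-G'\sum_i\xi_i^2/\lambda_i^2\le G'\Big(\frac1n\Big(\sum_i\xi_i/\lambda_i\Big)^2-\sum_i\xi_i^2/\lambda_i^2\Big)\le0,
\]
using $p\ge n$ and Cauchy--Schwarz. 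Equivalently, $r\mapsto\mu(r^n)$ is concave increasing and $\det^{1/n}$ is concave. The same $\frac1n$-Cauchy--Schwarz step is what absorbs $(\mu''\MA^2+\mu'\MA)|u^{i\bar i}u_{i\bar i1}|^2$ in the interior Hessian estimate, so this is where $p>n$ genuinely enters.

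Second, there is no background K\"ahler form here, so $Lu=n\mu'\MA-u_t$ contributes no $\sum_iF^{i\bar i}$ term. A test function $\log|\nabla u|^2+\phi(u)$ therefore cannot absorb the curvature term $-C\sum_iF^{i\bar i}$ produced by the commutation formulas. You need an additional strictly plurisubharmonic term such as $A\rho$. The paper uses $\log|\de u|^2+\frac12\log(u^2+5M^2)+A\rho$ for the gradient bound and $\log u_{\xi\bar\xi}+A\rho$ for the interior complex Hessian bound.
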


For the remainder of this section, we assume $u(z,t)$ to be a smooth solution to \eqref{parabolic Dirichlet problem Q T}, unless specified otherwise; the next several subsections are devoted to proving {\it a priori} estimates for $u$. The compatibility condition \eqref{compatibility condition} will only be needed for the parabolic Evans-Krylov arguments at the end of arguments. We conclude with a final subsection on stability.

\subsection{Uniform estimate} We begin with an $L^\infty$-estimate for $u$.
\begin{proposition}\label{uniform estimate}
Let $u$ be a smooth solution of \eqref{parabolic Dirichlet problem Q T}. If $K_{2}>0$, then
\[
-\left(\|u_{0}\|_{C^{0}}+\frac{K_{1}}{K_{2}}\right)e^{K_{2}t} \leq u \leq 0 \ \ \text{in $\bar{Q}_{T}$}.
\]
If $K_{2}=0$, then there exists a constant $C$ depending only on $K_{1}$, $\mu$ and $(\Omega,\omega)$ such that
\[
-\|u_{0}\|_{C^{0}}-C \leq u \leq 0 \ \ \text{in $\bar{Q}_{T}$}.
\]
\end{proposition}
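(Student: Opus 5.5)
Both inequalities follow from the maximum principle applied to suitable barriers; the only delicate point is the lower bound when $K_2 = 0$.

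\textbf{Upper bound.} Since $u(\cdot,t)$ is plurisubharmonic for each $t$ and $u=0$ on $\p\Omega$, we get $u(\cdot,t)\le 0$ on $\Omega_t$ directly. This needs neither the equation nor the initial data.

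\textbf{Lower bound, $K_2>0$.} Here the obvious barrier is $\underline{u}(t) = -A e^{K_2 t}$ with $A = \|u_0\|_{C^0}+K_1/K_2$. I compare $u$ with $\underline u$ through a minimum-point argument. Fix $\e>0$ and take $v = u + (A+\e)e^{K_2 t}$. Then $v>0$ on the parabolic boundary: on $\Omega_0$ because $u_0\ge -\|u_0\|_{C^0}$, and on $\bar\Gamma_T$ because $u=0$ there. Suppose $v$ first reaches $0$ at an interior point $(z_0,t_0)$ with $t_0>0$. At that point:
\begin{itemize}
\item $u_t\le -K_2(A+\e)e^{K_2t_0}$, since $v_t\le 0$ at a first zero;
\item $\ddbar u(z_0,t_0)\ge 0$, from plurisubharmonicity;
\item $u(z_0,t_0) = -(A+\e)e^{K_2t_0}$.
\end{itemize}
Because $\mu$ is increasing, $F(u)\ge\mu(\MA(u))$ at that point, but this is only useful if $\MA(u)$ is bounded below. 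Since $\MA(u)$ can be tiny and $\mu(t)=\log t\to-\infty$, $F(u)$ is bounded above, not below. So I should not use $F$ from below. Instead I use $F(u)\le \mu(\MA(u))$ with $\MA(u)$ controlled at a minimum point of $v$ in $z$: there $\ddbar u(z_0,t_0) = \ddbar v\ge 0$. This is no upper bound on $\MA$ either. Hence the argument has to run through $u_t$ directly: $u_t = F(u)-f$.

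I reorganize the estimate accordingly. At a spatial minimum of $u(\cdot,t)$ we have $\ddbar u\ge0$, but no upper bound on the Hessian, and $F$ has the wrong sign for a lower bound on $u_t$. The fix is to compare $u$ with the subsolution $\underline u = -A e^{K_2 t} + B\rho$, or more simply to note that $F(u)\ge \mu(\MA(u))$ is the wrong direction and use the standard comparison principle for the parabolic operator $F(u)-u_t$. This operator is increasing in $\ddbar u$ and decreasing in $u_t$. The plan is therefore: show that $w=-Ae^{K_2t}+\phi(z)$, with $\phi\le0$ strictly psh and vanishing on $\p\Omega$ (a multiple of $\rho$, small enough that $w\le u$ on the parabolic boundary after adjusting $A$ by $\e$), satisfies
\[
F(w)-w_t \ge f(z,t,w).
\]
Then at a first touching point $u-w$ has a minimum, so $\ddbar u\ge\ddbar w$ and $u_t\le w_t$. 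This gives $F(u)-u_t\ge F(w)-w_t\ge f(w)=f(u)$ there, and with the strict $\e$-version a contradiction follows. In the $K_2>0$ case, $w_t = -AK_2e^{K_2t}$ gives $-w_t\ge K_1+K_2|w|-K_1\cdot 0$, matching $|f|\le K_1+K_2|w|$ up to the term $F(w)$. That term is bounded below only through $\mu(\MA(\phi))$, which for the stated clean bound must be absorbed. I expect the exact constant to require taking $\phi\to0$ in a limiting argument, or using the fact that the bound needs only to hold along the minimum.

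\textbf{Lower bound, $K_2=0$.} Here I use the time-independent barrier $w = -\|u_0\|_{C^0} + B\rho - C'$ with $B$ large. Then $\MA(w)\ge B^n\e_0^n$, so $F(w)\ge\mu(B^n\e_0^n)\ge K_1$ once $B$ is chosen via $\mu$; here $\mu\to\infty$ is needed, which holds for both choices of $\mu$. Since $w_t=0$, we get $F(w)-w_t\ge K_1\ge f$. The comparison is arranged so that $w\le u$ on the parabolic boundary, using $w\le -\|u_0\|_{C^0}\le u_0$ on $\Omega_0$ and $w\le 0$ on $\Gamma$. This yields $u\ge -\|u_0\|_{C^0}-B\sup|\rho|$, with the constant depending only on $K_1$, $\mu$ and $(\Omega,\omega)$.

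\textbf{Main obstacle.} The delicate step is the touching argument with $\e$-strictness, especially keeping the constants exactly as stated in the $K_2>0$ case.
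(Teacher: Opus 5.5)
Your upper bound and your $K_2=0$ argument are correct and essentially the paper's: it uses the barrier $u_0+A\rho$ where you use $-\|u_0\|_{C^0}+B\rho$. The $K_2>0$ case, however, is not proved. For $w=-Ae^{K_2t}+\phi$ with $A=\|u_0\|_{C^0}+K_1/K_2$ you do not get $-w_t\ge K_1+K_2|w|$; instead $-w_t=K_1+K_2|w|-(K_1+K_2|\phi|)$. So you would need $\mu(\MA(\phi))\ge K_1+K_2|\phi|$ everywhere. For $\phi=B\rho$ and $\mu=\log$ this fails for every $B$ once $K_2$ is large, since the left side grows like $n\log B$ and the right like $K_2B$. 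Letting $\phi\to0$ only makes it worse, since $\mu(\MA(\phi))\to-\infty$. The missing idea is to let the strictly psh part of the barrier carry the factor $e^{K_2t}$ as well. For
\[
w=e^{K_2t}\bigl(\e_0^{-1}\rho-\|u_0\|_{C^0}\bigr)-\tfrac{K_1}{K_2}\bigl(e^{K_2t}-1\bigr)
\]
one has $w_t=K_2w-K_1$, i.e. $-w_t=K_1+K_2|w|$ exactly. Also $\ddbar w\ge e^{K_2t}\omega$, which gives $F(w)\ge\mu(1)=0$. Hence $F(w)-w_t\ge f(z,t,w)$ and $w\le u$ on $\p^*Q_T$. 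Now run your touching argument on $w-\e e^{(K_2+1)t}$. This perturbation raises $-w_t$ by $\e(K_2+1)e^{(K_2+1)t}$ but $K_2|w|$ only by $\e K_2e^{(K_2+1)t}$, so the inequality becomes strict. It yields
\[
u\ge-\bigl(\|u_0\|_{C^0}+\e_0^{-1}\sup_\Omega|\rho|+K_1/K_2\bigr)e^{K_2t}.
\]

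The extra term $\e_0^{-1}\sup_\Omega|\rho|$ cannot be dropped. The $K_2>0$ bound is false as stated, so no argument can close your gap. Take the following data:
\begin{itemize}
\item $\Omega$ the unit ball with $\omega=\ddbar|z|^2$;
\item $f\equiv0$, which is admissible with $K_1=K_3=0$ and any $K_2>0$;
\item $u_0=h(|z|^2)$, where $h(1)=0$, $h'$ is nondecreasing, $h'\equiv1$ near $r=1$ and $h'\equiv c\in(0,1)$ near $r=0$.
\end{itemize}
With $r=|z|^2$, $\MA(u_0)=(h')^{n-1}(h'+rh'')$. It equals $1$ near $\p\Omega$, so all compatibility conditions hold and a smooth solution exists. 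It equals $c^n$ near the minimum point $0$, and $\|u_0\|_{C^0}=\int_0^1h'\,dr\le1$. Hence $u_t(0,0)=n\log c$, so $u(0,t)=-\|u_0\|_{C^0}+nt\log c+O(t^2)$. For small $t>0$ this lies below $-\|u_0\|_{C^0}e^{K_2t}$ whenever $K_2<n|\log c|$.

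The paper's own proof of this case has exactly the defect you sensed. Its barrier $w=-\frac{K_1}{K_2}(1-e^{-K_2t})-\|u_0\|_{C^0}$ is spatially constant. The asserted identity $e^{-K_2t}F(e^{K_2t}w)-w_t=K_1e^{-K_2t}$ silently sets $F(e^{K_2t}w)=0$, whereas $\MA(e^{K_2t}w)=0$ and so $F=\mu(0)=-\infty$ there. A constant is also not an admissible strictly psh comparison function. The corrected bound above is still of the form $u\ge-C(T)$, which is all the later estimates use.
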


\begin{proof}
For any fixed $t>0$, $u(\cdot,t)$ is plurisubharmonic in $\Omega_{t}$. By the maximum principle, we have
\[
\sup_{\Omega_{t}}u(\cdot,t) \leq \sup_{\p\Omega_{t}}u(\cdot,t) = 0.
\]
It follows that $u \leq 0$ in $Q_{T}$. It thus suffices to establish a lower bound for $u$.\\

{\noindent \bf Case 1:} $K_{2}>0$. Set $v=e^{-K_{2}t}u$ and compute
\[
\begin{split}
e^{-K_{2}t}F(e^{K_{2}t}v)-v_{t}
= {} & e^{-K_{2}t}F(u)-e^{-K_{2}t}u_{t}+K_{2}e^{-K_{2}t}u \\
= {} & e^{-K_{2}t}f+K_{2}e^{-K_{2}t}u \\
\leq {} & e^{-K_{2}t}(K_{1}-K_{2}u)+K_{2}e^{-K_{2}t}u \\
= {} & K_{1}e^{-K_{2}t}.
\end{split}
\]
Define
\[
w = -\frac{K_{1}}{K_{2}}\left(1-e^{-K_{2}t}\right)-\|u_{0}\|_{C^{0}}.
\]
It follows that
\[
e^{-K_{2}t}F(e^{K_{2}t}w)-w_{t} = K_{1}e^{-K_{2}t}
\]
and so
\[
\begin{cases}
\ e^{-K_{2}t}F(e^{K_{2}t}v)-v_{t} \leq e^{-K_{2}t}F(e^{K_{2}t}w)-w_{t} & \mbox{in $Q_{T}$},  \\[1mm]
\ v \geq w & \mbox{on $\p^{*}Q_{T}$}.
\end{cases}
\]
Since the operator $f \mapsto e^{-K_2t}F(e^{K_2 t} f)$ is still elliptic for $f\in\mathcal{H}(\Omega)$, the parabolic comparison principle now shows that $v\geq w$ in $Q_{T}$, which implies
\[
u \geq -\frac{K_{1}}{K_{2}}\left(e^{K_{2}t}-1\right)-e^{K_{2}t}\cdot \|u_{0}\|_{C^{0}} \geq -\left(\|u_{0}\|_{C^{0}}+\frac{K_{1}}{K_{2}}\right)e^{K_{2}t}.
\]

\medskip
{\noindent \bf Case 2: $K_{2}=0$}. In this case, we have $|f|\leq K_{1}$. Since $\Omega$ is a strongly pseudoconvex manifold, then the defining function $\rho$ satisfies $\rho\leq0$ and $\ddbar\rho\geq\e_{0}\omega$ for some $\ve_{0}>0$. Set $\underline{u}=u_{0}+A\rho$. By $\ddbar u_{0}\geq0$, we have
\[
\MA(\underline{u}) = \MA(u_{0}+{A}\rho) \geq \MA({A}\rho) \geq {A}^{n}\ve_{0}^{n},
\]
and so
\[
\begin{cases}
\ F(\underline{u})-\underline{u}_{t} \geq \mu({A}^{n}\ve_{0}^{n}) & \mbox{in $Q_{T}$},  \\[1mm]
\ \underline{u} = u_{0}+{A}\rho \leq u_{0} & \mbox{on $\Omega_{0}$}, \\[1mm]
\ \underline{u} = 0 & \mbox{on $\bar{\Gamma}_{T}$}.
\end{cases}
\]
Using $\lim_{t\to\infty}\mu(t)=\infty$, we choose ${A}$ sufficiently large such that
\[
\mu({A}^{n}\ve_{0}^{n}) \geq K_{1} \geq f = F(u)-u_{t}.
\]
It follows from the comparison principle that
\[
u \geq \underline{u} \geq -\norm{u_0}_{C^0} - C \ \ \text{in $Q_{T}$}.\qedhere
\]
\end{proof}

\subsection{Estimate for $u_{t}$}
\begin{proposition}\label{u t estimate}
Let $u$ be a smooth solution of \eqref{parabolic Dirichlet problem Q T}. Then there exists a constant $C$ depending only on $u_{0}$, $K_{1}$, $K_{2}$, $K_{3}$, $\mu$ and $(\Omega,\omega)$ such that
\[
\sup_{\bar{Q}_{T}}|u_{t}| \leq CM,
\]
where $M=\|u\|_{C^{0}(\ov{Q}_T)}+1$.
\end{proposition}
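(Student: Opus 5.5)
We bound $u_t$ by the maximum principle applied to the equation satisfied by $u_t$, with separate treatment of the initial time and of the lateral boundary $\bar\Gamma_T$.

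\textbf{Interior equation.} Differentiating $F(u)-u_t=f(z,t,u)$ in $t$ and writing $L=\mu'(\MA(u))\,\MA(u)\,(\ddbar u)^{-1}$ for the linearized operator (elliptic, since $\mu'>0$), the function $w=u_t$ satisfies
\[
L w - w_t = f_t + f_u\, w .
\]
Because $|f_t|+|f_u|\le K_3$, set $\tilde w = e^{-(K_3+1)t}w$. At an interior positive maximum of $\tilde w$ (or negative minimum) we have $L\tilde w\le 0$ and $\tilde w_t\ge0$. Hence $0\ge e^{-(K_3+1)t}(f_t+f_u w)+(K_3+1)\tilde w$, which forces $|\tilde w|\le C$. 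Using $e^{(K_3+1)t}$ alone would make the constant depend on $T$. Instead I would run the standard argument on $w$ directly: at an interior maximum, $|w|\le K_3/\ldots$ fails when $f_u$ has the wrong sign. So I accept dependence through $M$ and $T$ where needed, and aim for $\sup|u_t|\le C\max(\sup_{\partial^*Q_T}|u_t|,1)$ via the exponential weight. Note that the statement's $C$ does not explicitly depend on $T$; if $T$-independence is needed, I would instead compare with $u_t$ directly through the bound $|f_u w|$ being absorbed. The interior step is routine; the real content lies on $\partial^*Q_T$.

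\textbf{Initial time.} At $t=0$ we have $u_t=F(u_0)-f(z,0,u_0)$. Since $\ddbar u_0\ge\delta_0\omega$ and $u_0$ is smooth, $\MA(u_0)$ is bounded above and below by positive constants, so $|u_t|\le C(u_0,K_1,K_2,\mu)$.

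\textbf{Lateral boundary.} On $\bar\Gamma_T$ we have $u\equiv0$, so $u_t=0$ there. Hence no barrier is needed for $u_t$ itself; the boundary values are trivial. Where, then, does the factor $M$ enter? It enters through $|f|\le K_1+K_2|u|\le CM$ if we also bound $F(u)$. Alternatively, I would bound $u_t$ via the difference-quotient form $\frac{u(z,t+h)-u(z,t)}{h}$. Both $u(\cdot,t+h)$ and $u(\cdot,t)$ solve the same equation with $f$ evaluated at shifted times. The comparison principle with the $K_3$-Lipschitz dependence of $f$ in $t$ and $u$ then gives $|u(t+h)-u(t)|\le h\,C\sup_{\Omega}|u(h)-u_0|/h$ plus $h K_3$ terms.

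The main obstacle is getting a \emph{linear} dependence on $M$ while avoiding $T$-dependence when $f_u$ may be positive. I would handle this by estimating the initial segment $|u(h)-u_0|\le Ch$. For that I would use sub- and super-barriers $u_0\pm Ct$, together with the sub-barrier $u_0+A\rho$-type construction from Proposition~\ref{uniform estimate}, where the growth $|f|\le K_1+K_2 M$ produces the factor $M$. Finally, the bound $\|u\|_{C^0}\le M$ controls $f$ along the comparison.
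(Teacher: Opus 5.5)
Your argument has a genuine gap: it never produces a constant independent of $T$, and the statement requires one. There $C$ depends only on $u_0,K_1,K_2,K_3,\mu,(\Omega,\omega)$, and this is used later, in the $T$-uniform assertion of Theorem~\ref{thm:estimate of flow} and in Step 5 of Section~\ref{superlinear section}, where $C_1$ in $|u^{s_0}_t|\le C_1(1+N_t)$ must not depend on $t$. The weight $e^{-(K_3+1)t}$ gives only $|u_t|\le Ce^{(K_3+1)T}$. That suffices for existence on a fixed interval, but it is not the statement. Your proposed repairs do not remove the exponential:
\begin{itemize}
\item In $Lw-w_t=f_t+f_uw$ there is no term of favorable sign against which $f_uw$ could be absorbed when $f_u>0$.
\item Comparing $u(\cdot,t+h)$ with $u(\cdot,t)$ is a linear comparison with zeroth-order coefficient $\int_0^1 f_u$, so it carries the same factor $e^{KT}$ as Lemma~\ref{stability lemma}.
\item Controlling $|u(h)-u_0|\le Ch$ handles only the initial layer, not propagation to large $t$.
\end{itemize}
You also locate the difficulty in the wrong place. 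The values of $u_t$ on $\partial^{*}Q_T$ are trivial: $u_t=0$ on $\bar\Gamma_T$, and $u_t=F(u_0)-f(z,0,u_0)$ at $t=0$. The whole content is the interior estimate.

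The missing idea is to trade the $C^0$ bound $|u|\le M-1$ for $T$-independence. The paper works with $G=u_t/(2M-u)$ at its minimum point. Because $\partial_t$ also hits the denominator, $G_t\le 0$ gives $u_t^2\le -u_{tt}(2M-u)$, which is a quadratic good term. Tracing $G_{i\bar j}\ge 0$ against $u^{i\bar j}$ (the gradient terms cancel at the critical point) gives $-u^{i\bar j}u_{ti\bar j}(2M-u)\le nu_t$. There are then two cases at that point:
\begin{itemize}
\item If $\MA(u)\ge 1$, then $F\ge 0$, and the equation gives $u_t\ge -K_1-K_2M$ directly.
\item Otherwise $F=\log\MA$ near the point, so $F^{i\bar j}=u^{i\bar j}$. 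Substituting the $t$-differentiated equation $u^{i\bar j}u_{i\bar jt}-u_{tt}=f_t+f_uu_t$, and using $2M-u\le 3M$, gives $u_t^2\le nu_t+CM+CM|u_t|$. Hence $u_t\ge -CM$.
\end{itemize}
The upper bound is analogous, using $G=u_t/(2M+u)$ at its maximum. One may assume $\MA(u)>e^2$, so that $F=\MA^{1/p}$ and $\mu'\MA=\frac1p(u_t+f)$. This produces $u_t^2\le\frac np u_t^2+CM+CM|u_t|$, which closes because $p>n$. The linear factor $M$ in the conclusion is exactly the price paid for eliminating the dependence on $T$.
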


\begin{proof}

We first establish the lower bound $u_{t}\geq-CM$. Consider the quantity
\[
G = \frac{u_{t}}{2M-u}.
\]
Let $p_{0}=(z_{0},t_{0})$ be the minimum point of $G$ in $\bar{Q}_{T}$; it will suffice to show $u_{t}(p_{0})\geq-CM$. If $t_0 = 0$, then we have $\ddbar u_0 \geq \delta_0\ddbar\omega$, so that
\[
u_t(p_0) = F(u(p_0)) - f(z_0, t_0, u(p_0))  \geq \mu(\delta_0^n) - K_1-K_2M.
\]
If $p_0 \in \ov{\Gamma}_T$,  then clearly $u_t = 0$. We thus consider when $z_{0}$ is an interior point of $\Omega_{t_{0}}$. Choose a holomorphic normal coordinate system $(U,\{z_{i}\}_{i=1}^{n})$ centered at $z_{0}$. Then at $p_{0}$, we have
\begin{equation}\label{t derivative equation}
0 \geq G_{t} = \frac{u_{tt}}{2M-u}+\frac{u_{t}^{2}}{(2M-u)^{2}},
\end{equation}
\[
0 = G_{i} = \frac{u_{ti}}{2M-u}+\frac{u_{t}u_{i}}{(2M-u)^{2}},\quad\quad 0 = G_{\ov{j}} = \frac{u_{t\ov{j}}}{2M-u}+\frac{u_{t}u_{\ov{j}}}{(2M-u)^{2}},
\]
and
\[
\begin{split}
0 \leq (G_{i\bar j})
= {} & \left(\frac{u_{ti\bar j}}{2M-u}+\frac{u_{ti}u_{\bar j}+u_{t\bar j}u_i+u_tu_{i\bar j}}{(2M-u)^2}
+\frac{2u_tu_iu_{\bar j}}{(2M-u)^3} \right) \\
= {} & \left(\frac{u_{ti\bar j}}{2M-u}+\frac{u_tu_{i\bar j}}{(2M-u)^2}
+\frac{G_{i}u_{\ov{j}}+u_{i}G_{\ov{j}}}{(2M-u)} \right) \\
= {} & \left(\frac{u_{ti\bar j}}{2M-u}+\frac{u_tu_{i\bar j}}{(2M-u)^2}\right).
\end{split}
\]
By rearranging \eqref{t derivative equation} and taking the trace of $(G_{i\ov{j}})$, we see:
\begin{equation}\label{u t estimate eqn 1}
u_t^2 \leq -u_{tt}(2M-u), \quad\text{ and }\quad
-u^{i\ov{j}}u_{ti\ov{j}}(2M-u) \leq u^{i\ov{j}}u_{i\ov{j}}u_{t} = nu_{t}.
\end{equation}
Without loss of generality, we assume that $\MA(u)<1$ at $p_{0}$ (otherwise $\MA(u)\geq1$ implies $F(u)\geq0$ and so $u_{t}=F(u)-f\geq-K_{1}-K_{2}M$, as required). Then $F(u)=\log\MA(u)$ near $p_{0}$, and differentiating the flow \eqref{parabolic Dirichlet problem Q T} with respect to $t$ gives
\[
u^{i\bar j}u_{i\bar jt}-u_{tt} = f_t+f_uu_t.
\]
Combining this with \eqref{u t estimate eqn 1},
\[
\begin{split}
u_t^2 \leq {} & -u_{tt}(2M-u) \\
= {} & - u^{i\bar j}u_{i\bar jt}(2M-u)+(f_t+f_uu_t)(2M-u) \\[1mm]
\leq {} & \,  nu_t+(f_t+f_uu_t)(2M-u) \\[1mm]
\leq {} & \, nu_t+CM+CM|u_{t}|.
\end{split}
\]
By the Cauchy--Schwarz inequality, we obtain $u_{t}\geq-CM$ at $p_{0}$, as required.\\

The upper bound $u_{t}\leq C$ can be proved by considering the related quantity $G = \frac{u_{t}}{2M+u}$. Let $p_{0}=(z_{0},t_{0})$ be the maximum point of $G$ in $\bar{Q}_{T}$; it suffices to show $u_{t}(p_{0})\leq CM$. Repeating the arguments above, we may assume that $p_{0}$ is an interior point. Taking derivatives of $G$ at $p_0$ then similarly gives
\[
\frac{u_{t}^{2}}{(2M+u)^{2}}-\frac{u_{tt}}{2M+u} \leq 0, \ \ \
\left(\frac{u_{ti\bar j}}{2M+u}-\frac{u_tu_{i\bar j}}{(2M+u)^2}\right) \leq 0,
\]
and so
\begin{equation}\label{u t estimate eqn 2}
u_t^2 \leq u_{tt}(2M+u), \ \ \
u^{i\ov{j}}u_{ti\ov{j}}(2M+u) \leq u^{i\ov{j}}u_{i\ov{j}}u_{t} = nu_{t}.
\end{equation}
Without loss of generality, we assume that $\MA(u)>e^{2}$ at $p_{0}$ (otherwise $\MA(u)\leq e^{2}$ implies $F(u)\leq C$ and so $u_{t}=F(u)-f\leq C+K_{1}+K_{2}M$ as required). Then $F(u)=\MA(u)^{1/p}$ near $p_{0}$ and apply $\de_{t}$ to \eqref{parabolic Dirichlet problem Q T} now gives,
\[
\frac{1}{p} \cdot \MA(u)^{\frac{1}{p}} \cdot u^{i\bar j}u_{i\bar jt}-u_{tt} = f_t+f_uu_t.
\]
Using \eqref{parabolic Dirichlet problem Q T} and \eqref{u t estimate eqn 2} now gives,
\[
\begin{split}
u_t^2 \leq {} & u_{tt}(2M+u) \\[1mm]
= {} & \frac{1}{p} \cdot (u_{t}+f) \cdot u^{i\bar j}u_{i\bar jt}(2M+u)-(f_t+f_uu_t)(2M+u) \\[1mm]
\leq {} & \frac{1}{p} \cdot (u_{t}+f) \cdot nu_t-(f_t+f_uu_t)(2M+u) \\[1mm]
\leq {} & \frac{n}{p} \cdot u_{t}^{2}+CM+CM|u_{t}|.
\end{split}
\]
Recalling $p> n$ and using the Cauchy--Schwarz inequality, we obtain $u_{t}\leq CM$ at $p_{0}$, as required.
\end{proof}

\begin{corollary}\label{MA estimate}
Let $u$ be a smooth solution of \eqref{parabolic Dirichlet problem Q T}. Then there exists a positive constant $C$ depending only on $u_{0}$, $K_{1}$, $K_{2}$, $K_{3}$, $\mu$ and $(\Omega,\omega)$ such that
\[
e^{-CM} \leq \MA(u) \leq CM^{p},
\]
and
\[
C^{-1} \leq \mu'(\MA(u)) \cdot \MA(u) \leq CM,
\]
where $M=\|u\|_{C^{0}(\ov{Q}_T)}+1$.
\end{corollary}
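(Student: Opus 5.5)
The corollary follows from the flow equation together with Proposition~\ref{u t estimate} and Proposition~\ref{uniform estimate}. The flow gives
\[
F(u)=\mu(\MA(u)) = u_t + f(z,t,u),
\]
and the growth assumption gives $|f|\le K_1+K_2|u|\le K_1+K_2M$. Combined with $|u_t|\le CM$ from Proposition~\ref{u t estimate}, this yields $|\mu(\MA(u))|\le CM$ on $\bar Q_T$, with $C$ depending only on the listed data. The plan is to invert $\mu$ to bound $\MA(u)$, and then use the explicit form of $\mu$ to control $t\mu'(t)$.

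\emph{Step 1: bounds on $\MA(u)$.} Since $\mu'>0$, $\mu$ is strictly increasing.

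For the lower bound, assume first that $\MA(u)\le 1$. Then $\mu=\log$, so $\log\MA(u)\ge -CM$ and $\MA(u)\ge e^{-CM}$. If $\MA(u)>1$, the bound $\MA(u)\ge e^{-CM}$ holds trivially.

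For the upper bound, assume first that $\MA(u)\ge e^2$. Then $\mu(t)=t^{1/p}$, so $\MA(u)^{1/p}\le CM$ and $\MA(u)\le C^pM^p$. If $\MA(u)<e^2$, the bound holds after enlarging $C$, since $M\ge1$.

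When $p=\infty$ we have $\mu=\log$ throughout, and only the lower bound is meaningful. The upper bound is then read as $\MA(u)\le e^{CM}$, with $M^p$ interpreted accordingly.

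\emph{Step 2: bounds on $\mu'(\MA(u))\MA(u)$.} Set $h(t)=t\mu'(t)$. Then $h=1$ on $(0,1]$, and $h(t)=\tfrac1p t^{1/p}$ for $t\ge e^2$. On the compact interval $[1,e^2]$, $h$ is continuous and positive, because $\mu'>0$. Hence $h$ is bounded above and below there by constants depending only on $\mu$.

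Consequently $h\ge c>0$ everywhere. Indeed, $h=1$ on $(0,1]$, and $\tfrac1p t^{1/p}\ge \tfrac1p e^{2/p}$ for $t\ge e^2$. This gives the lower bound $C^{-1}\le \mu'(\MA(u))\MA(u)$.

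For the upper bound, if $\MA(u)\ge e^2$ then $h(\MA(u))=\tfrac1p\MA(u)^{1/p}=\tfrac1p F(u)\le CM$, using $F(u)\le CM$ from the flow equation. In the remaining ranges $h$ is bounded by a constant depending only on $\mu$, which is at most $CM$ because $M\ge1$. In the case $p=\infty$ we have $h\equiv1$ identically.

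\emph{Main obstacle.} There is little here beyond bookkeeping. The one point requiring care is the middle range $[1,e^2]$, where $\mu$ is not explicit. This is handled by compactness and the positivity $\mu'>0$ from Lemma~\ref{construction of mu}, so the constants there depend only on $\mu$.
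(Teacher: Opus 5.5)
Your proof is correct and follows the paper's argument: bound $|\mu(\MA(u))| = |u_t+f|\le CM$ via Proposition~\ref{u t estimate}, invert $\mu$ on the explicit ranges $(0,1]$ and $[e^2,\infty)$ to bound $\MA(u)$, and treat $\mu'(\MA)\MA$ separately on $(0,1)$, $[1,e^2]$ and $(e^2,\infty)$. Your remark that for $p=\infty$ the upper bound should be read as $\MA(u)\le e^{CM}$ is a sensible clarification that the paper leaves implicit.
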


\begin{proof}
For notational convenience, we write $\MA=\MA(u)$. By Proposition \ref{u t estimate},
\begin{equation}\label{MA estimate eqn}
|\mu(\MA)| = |u_{t}+f| \leq CM.
\end{equation}
Up to increasing $C$, the bounds on $\MA$ are immediate, e.g. either $\MA \leq e^2 \leq CM^p$ or $\mu(\MA) = \MA^{1/p}$ and $\MA \leq CM^p$. To estimate $\mu'(\MA)\cdot\MA$, there are three cases:
\begin{itemize}\setlength{\itemsep}{1mm}
\item if $\MA\in(0,1)$, then $\mu'(\MA)\cdot\MA=1$.\\[-3mm]
\item if $\MA\in[1,e^{2}]$, then
\[
\min_{s\in[1,e^{2}]}\mu'(s) \leq \mu'(\MA)\cdot\MA \leq \max_{s\in[1,e^{2}]}\mu'(s)\cdot e^{2}.
\]
\item if $\MA\in(e^{2},\infty)$, then
\[
\frac{1}{p}\cdot e^{\frac{2}{p}} \leq \frac{1}{p}\cdot\MA^{\frac{1}{p}} = \mu'(\MA)\cdot\MA \leq \frac{1}{p}\cdot |u_t + f| \leq CM.\qedhere
\]
\end{itemize}
\end{proof}

\subsection{Gradient estimate} We start by bounding the gradient along the parabolic boundary.

\begin{lemma}\label{boundary gradient estimate}
Let $u$ be a smooth solution of \eqref{parabolic Dirichlet problem Q T}. Then there exists a constant $C$ depending only on $u_{0}$, $K_{1}$, $K_{2}$, $\mu$ and $(\Omega,\omega)$ such that
\[
\sup_{\de^{*}Q_{T}}|\de u| \leq CM^{p},
\]
where $M=\|u\|_{C^{0}(\ov{Q}_T)}+1$.
\end{lemma}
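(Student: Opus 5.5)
On the bottom $\Omega_0$ of the parabolic boundary we have $u=u_0$, so $|\de u|=|\de u_0|\le C$ there. On the lateral part $\bar\Gamma_T$ we have $u=0$, so the tangential derivatives vanish and only the normal derivative $|u_\nu|$ needs bounding. The plan is the usual barrier argument: squeeze $u$ near $\de\Omega$ between a lower barrier $\underline u$ and an upper barrier $0$, both vanishing on $\de\Omega$. Since $u\le 0$ by Proposition~\ref{uniform estimate}, the upper side is free. This gives
\[
0\ge u_\nu\ge \underline u_\nu \quad\text{on }\bar\Gamma_T,
\]
with $\nu$ the outward normal, so it suffices to build $\underline u$ with $|\de\underline u|\le CM^p$.

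For the lower barrier I would take, as in Case 2 of Proposition~\ref{uniform estimate},
\[
\underline u = u_0 + A\rho ,
\]
where $\rho$ is the strongly pseudoconvex defining function and $A$ is to be chosen. Then $\underline u$ is independent of $t$, so $\underline u_t=0$, and
\[
\MA(\underline u)\ge A^n\ve_0^n .
\]
By Corollary~\ref{MA estimate}, $\MA(u)\le CM^p$ throughout $Q_T$. Choose $A$ with $A^n\ve_0^n\ge CM^p$, i.e.\ $A\approx CM^{p/n}$; for $M\ge1$ this is at most $CM^p$. Monotonicity of $\mu$ then gives
\[
F(\underline u)-\underline u_t\ \ge\ \mu(CM^p)\ \ge\ F(u) = F(u)-u_t+u_t .
\]

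The comparison has to be run as an elliptic one, slice by slice, because the time derivative does not cancel. For each fixed $t$ we have $\MA(\underline u)\ge \MA(u(\cdot,t))$ in $\Omega$ and $\underline u=0=u$ on $\de\Omega$. The elliptic comparison principle for the complex Monge--Amp\`ere operator on psh functions then gives $\underline u\le u(\cdot,t)$. This avoids any issue with $u_t$: we only use the pointwise bound on $\MA(u)$, which already absorbed the $u_t$ estimate of Proposition~\ref{u t estimate}.

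It follows that on $\bar\Gamma_T$,
\[
|u_\nu|\le|\underline u_\nu|\le |\de u_0|+A|\de\rho|\le C+CM^{p/n}\le CM^p .
\]
Together with the bound on $\Omega_0$, this proves the lemma. The only delicate step is justifying the slice-wise comparison. Here $u(\cdot,t)$ is psh and smooth and $\underline u$ is psh with the same boundary values, so the standard Bedford--Taylor comparison, or the classical maximum principle applied to $\underline u - u(\cdot,t)$ using strict ellipticity, suffices. The dependence of the constant on $K_1,K_2,\mu$ enters only through Corollary~\ref{MA estimate}.
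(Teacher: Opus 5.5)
Your argument is sound as far as it goes, but it does not prove the lemma with the stated dependence of the constant. You bound $\MA(u)\le CM^p$ using Corollary~\ref{MA estimate}. That corollary rests on the $u_t$ estimate of Proposition~\ref{u t estimate}, whose constant depends on $K_3$, since that proof differentiates the flow in $t$ and uses $|f_t|+|f_u|\le K_3$. So your $C$ depends on $K_3$, whereas the lemma asserts dependence only on $u_0$, $K_1$, $K_2$, $\mu$ and $(\Omega,\omega)$. Your closing remark that the dependence ``enters only through Corollary~\ref{MA estimate}'' therefore hides a $K_3$. This is harmless for the later use in Proposition~\ref{gradient estimate}, where $K_3$ is allowed, but it is a weaker statement than the one claimed.

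The missing observation is that the parabolic comparison principle needs no information on $u_t$. Your claim that the comparison ``has to be run as an elliptic one, slice by slice, because the time derivative does not cancel'' is not right. The barrier is time-independent, so $\underline u_t=0$. One only needs $F(\underline u)\ge F(u)-u_t=f(z,t,u)$, and the zeroth-order bound gives $|f(z,t,u)|\le K_1+K_2|u|\le (K_1+K_2)M$.

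This is exactly what the paper does. Take $\underline u=u_0+(AM)^{p/n}\rho$, so that $\MA(\underline u)\ge (AM)^p\ve_0^n$. Choose $A$ with $A^p\ve_0^n\ge e^2$ and $A\ve_0^{n/p}\ge K_1+K_2$. Then $F(\underline u)-\underline u_t\ge AM\ve_0^{n/p}\ge (K_1+K_2)M\ge F(u)-u_t$. Also $\underline u\le u$ on $\de^*Q_T$ because $\rho\le 0$. Parabolic comparison then yields $\underline u\le u$ in $Q_T$.

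The barrier and the $M^{p/n}$ scaling are the same as yours; only the comparison mechanism differs, and the parabolic one is what removes $K_3$. Your slice-wise elliptic comparison, fed by Corollary~\ref{MA estimate}, is what the paper uses later in Step 3 of Lemma~\ref{boundary Hessian estimate}. There the resulting dependence on $f$ is permitted.

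There is also a small sign slip. With $\nu$ the outward normal, $u\le 0$ in $\Omega$ and $\underline u\le u$ with equality on $\de\Omega$ give $0\le u_\nu\le \underline u_\nu$, not $0\ge u_\nu\ge \underline u_\nu$. The conclusion $|u_\nu|\le|\underline u_\nu|$ is unaffected.
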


\begin{proof}

Let $\rho$ be the defining function of $\Omega$ and set
\[
\underline{u}=u_{0}+({A}M)^{\frac{p}{n}}\rho.
\]
We claim that the constant ${A}$ can be chosen sufficiently large such that
\begin{equation}\label{boundary gradient estimate claim}
\underline{u} \leq u \ \ \text{in $Q_{T}$}.
\end{equation}
Given this claim, for any fixed $t\in(0,T)$, we have
\[
\underline{u} \leq u \leq 0 \ \ \mbox{in $\Omega_{t}$}, \ \ \
\underline{u} = u = 0 \ \ \mbox{on $\de\Omega_{t}$}.
\]
This implies
\[
\sup_{\de\Omega_{t}}|\de u| \leq \sup_{\de\Omega_{t}}|\de \underline{u}| \leq CM^{\frac{p}{n}} \leq CM^{p}.
\]
Since $t\in(0,T)$ is arbitrary, then $\sup_{\bar{\Gamma}_{T}}|\de u| \leq CM^{p}$ and so
\[
\sup_{\de^{*}Q_{T}}|\de u|
= \max\left(\sup_{\Omega_{0}}|\de u_{0}|,\, \sup_{\bar{\Gamma}_{T}}|\de u|\right) \leq CM^{p}.
\]

\medskip

Now we prove the claim \eqref{boundary gradient estimate claim}. It follows from $\ddbar u_{0}\geq0$ and $\ddbar\rho\geq\e_{0}\omega$ that
\[
F(\underline{u}) = \mu(\MA(\underline{u})) \geq \mu({A}^{p}M^{p}\ve_{0}^{n}).
\]
Chose ${A}$ sufficiently large such that ${A}^{p}\ve_{0}^{n} \geq e^{2}$ and ${A}\ve_{0}^{\frac{n}{p}} \geq K_{1}+K_{2}$. Then
\[
\mu({A}^{p}M^{p}\ve_{0}^{n}) = {A}M\ve_{0}^{\frac{n}{p}} \geq (K_{1}+K_{2})M  \geq \abs{f(z, t, u)}.
\]
It follows that
\[
\begin{cases}
\ F(u)-u_{t} \leq F(\underline{u})-\underline{u}_{t} & \mbox{in $Q_{T}$}, \\[1mm]
\ u \geq \underline{u} & \mbox{on $\p^{*}Q_{T}$}.
\end{cases}
\]
By the comparison principle, we obtain \eqref{boundary gradient estimate claim}.
\end{proof}

We now bound the gradient in the interior, using the maximum principle.

\begin{proposition}\label{gradient estimate}
Let $u$ be a smooth solution of \eqref{parabolic Dirichlet problem Q T}. Then there exists a constant $C$ depending only on $u_{0}$, $K_{1}$, $K_{2}$, $K_{3}$, $\mu$ and $(\Omega,\omega)$ such that
\[
\sup_{\bar{Q}_{T}}|\de u| \leq CM^{p},
\]
where $M=\|u\|_{C^{0}(\ov{Q}_T)}+1$.
\end{proposition}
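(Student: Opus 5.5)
We will prove the interior gradient bound by the maximum principle, applied to a quantity of Błocki type on the manifold. Lemma \ref{boundary gradient estimate} already gives $|\de u|\le CM^p$ on $\de^*Q_T$. It therefore suffices to bound $|\de u|^2$ at an interior maximum point of a suitable test function.

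We consider
\[
Q=\log|\de u|^2_\omega+\varphi(u),\qquad \varphi(u)=-\log(2M-u)\ \text{ or }\ \varphi(u)=\tfrac{A}{M}u,
\]
where the specific choice is to be tuned. We write $\mathcal{L}=\mu'(\MA)\MA\, u^{i\bar j}\de_i\de_{\bar j}-\de_t$ for the linearized operator. This is the natural operator, since differentiating $F(u)=\mu(\MA(u))$ produces $\mu'(\MA)\MA\,u^{i\bar j}$. Set $a:=\mu'(\MA)\MA$; by Corollary \ref{MA estimate}, $C^{-1}\le a\le CM$.

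At an interior maximum $p_0$ we work in normal coordinates with $(u_{i\bar j})$ diagonal. Differentiating the equation in a direction $z_k$ gives
\[
a\,u^{i\bar j}u_{i\bar jk}-u_{tk}=f_k+f_uu_k,
\]
and the right-hand side is bounded by $K_3(1+|\de u|)$. The standard computation then yields
\[
0\ge \mathcal{L}Q\ge \frac{a\,u^{i\bar i}|u_{ki}|^2+a\,u^{i\bar i}|u_{k\bar i}|^2}{|\de u|^2}-\frac{a\,u^{i\bar i}|\de_i|\de u|^2|^2}{|\de u|^4}-C a\sum_i u^{i\bar i}-\frac{C}{|\de u|}+\varphi'\mathcal{L}u+a\varphi''u^{i\bar i}|u_i|^2 .
\]
Here the curvature of $\omega$ contributes the term $-Ca\sum u^{i\bar i}$. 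For $\mathcal{L}u$ we have
\[
\mathcal{L}u=na-u_t=na-(F(u)-f),
\]
which is controlled by $CM$ via Proposition \ref{u t estimate}. Using $Q_i=0$, the bad third-derivative term is absorbed in the usual way against the $|u_{ki}|^2$ terms together with $\varphi'^2|u_i|^2$. This leaves the good term $a(\varphi''-\varphi'^2)u^{i\bar i}|u_i|^2$.

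With $\varphi=-\log(2M-u)$ one has $\varphi''-\varphi'^2=0$, so we would instead take $\varphi=-\log(2M-u)+\log(3M-u)$, or more simply $\varphi=e^{-Bu/M}$-type choices, to get $\varphi''-\varphi'^2\ge c/M^2$. This gives
\[
\frac{c\,a}{M^2}\,u^{i\bar i}|u_i|^2\le Ca\sum u^{i\bar i}+\frac{CM}{M}+\frac{C}{|\de u|}.
\]
Choosing the coordinates so that $|u_1|\approx|\de u|$ makes the left side at least $\frac{ca}{M^2}u^{1\bar1}|\de u|^2$. To close, we need to control the curvature term $\sum u^{i\bar i}$ against $u^{1\bar 1}|\de u|^2$. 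Standard trick: add the term $\varphi'\,a\sum u^{i\bar i}$ coming from using $u-\epsilon\rho$-type modifications, or use the fact that in $\mathcal{L}u$ the term $\varphi'\,a\,u^{i\bar i}u_{i\bar i}$ can be replaced by comparing with $\ddbar\rho\ge\e_0\omega$. Concretely, we include $+B\rho$ in $Q$, so that $\mathcal{L}\rho\ge \e_0 a\sum u^{i\bar i}$. With $B$ large this absorbs the curvature term. The resulting inequality $|\de u|^2\lesssim M^{2}\cdot(\text{powers of }M)$ then gives a polynomial bound $|\de u|\le CM^{p}$, after using $a\le CM$ and the bounds on $\MA$ from Corollary \ref{MA estimate}; the exponents may need bookkeeping to land at $M^p$.

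The main obstacle is the bookkeeping needed to keep the final dependence polynomial, namely $CM^p$. Because $\mu$ is not $\log$ for large $\MA$, one must also check the third-order terms involving $\mu''$. These appear only through $\mathcal{L}$ applied to $u_k$, namely $\mu''\MA^2(u^{i\bar j}u_{i\bar jk})(\cdots)$. They are handled by the structural condition $t^2\mu''+t\mu'\le \frac{t}{p}\mu'$, which makes $\mu(\MA)$ concave in $\log\MA$ up to a good sign. We would therefore first verify that the $\mu''$ contributions either have a good sign or are bounded by $\frac{n}{p}a\,u^{i\bar i}|u_{k\bar i}|^2/|\de u|^2$. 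The latter is absorbable since $p>n+1$.
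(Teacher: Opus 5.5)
Your outline matches the paper's: a maximum principle for $\log|\partial u|^2+\varphi(u)+B\rho$ with $\mathcal{L}=a\,u^{i\bar j}\partial_i\partial_{\bar j}-\partial_t$, using $C^{-1}\le a\le CM$ and $|u_t|\le CM$ (the paper takes $\varphi=\frac12\log(u^2+5M^2)$). There is a genuine gap, though, at the step you call routine.

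\textbf{The bad term versus the barrier.} Ending up with $a(\varphi''-\varphi'^2)u^{i\bar i}|u_i|^2$ corresponds to substituting $Q_i=0$ directly into $-a\,u^{i\bar i}|(|\partial u|^2)_i|^2/|\partial u|^4$. Once $B\rho$ is part of $Q$, that substitution produces $-a\,u^{i\bar i}|\varphi'u_i+B\rho_i|^2$, which contains $-B^2a\,u^{i\bar i}|\rho_i|^2$. Since $d\rho\neq0$ near $\partial\Omega$, this can be as large as $cB^2a\sum_iu^{i\bar i}$. Meanwhile $B\mathcal{L}\rho$ is only guaranteed to supply $B\epsilon_0a\sum_iu^{i\bar i}$. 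So you cannot ensure a net gain larger than $\max_B(B\epsilon_0-cB^2)=\epsilon_0^2/(4c)$, which need not beat the curvature constant, and ``with $B$ large this absorbs the curvature term'' fails.

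The missing idea is to cancel the bad term completely with the $|u_{ik}|^2$ terms (summation over $i$):
\[
|\partial u|^2\sum_k u^{i\bar i}|u_{ik}|^2\ \ge\ u^{i\bar i}\Big|\sum_k u_{ik}u_{\bar k}\Big|^2=u^{i\bar i}\big|(|\partial u|^2)_i-u_iu_{i\bar i}\big|^2\ \ge\ u^{i\bar i}\big|(|\partial u|^2)_i\big|^2-2\,\mathrm{Re}\big((|\partial u|^2)_i\,u_{\bar i}\big).
\]
The leftover $-2\,\mathrm{Re}((|\partial u|^2)_iu_{\bar i})/|\partial u|^4$ carries no $u^{i\bar i}$ weight. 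By $Q_i=0$ it equals $2\varphi'+2B\,\mathrm{Re}(\rho_iu_{\bar i})/|\partial u|^2\ge -C_B$ when $|\partial u|\ge1$. You then keep all of $a\varphi''u^{i\bar i}|u_i|^2$, so $\varphi''\gtrsim M^{-2}$ suffices and your modification of $\varphi$ is unnecessary. Moreover $B$ can now be taken with $B\epsilon_0$ exceeding the curvature constant by $1$.

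\textbf{The closing step.} After dividing by $a\ge C^{-1}$ you get, at the maximum point,
\[
\frac{c}{M^2}\sum_iu^{i\bar i}|u_i|^2+\sum_iu^{i\bar i}\le C .
\]
It is the surplus bound $\sum_iu^{i\bar i}\le C$, combined with $\prod_ju_{j\bar j}=\MA\le CM^p$, that gives $u_{i\bar i}\le\MA\,(\sum_ju^{j\bar j})^{n-1}\le CM^p$. Hence $u^{i\bar i}\ge(CM^p)^{-1}$ and $|\partial u|^2\le CM^{p+2}\le CM^{2p}$ at that point. This transfers to $\bar Q_T$ because $\varphi$ and $B\rho$ have bounded oscillation. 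Without this eigenvalue bound, $u^{i\bar i}|u_i|^2\le CM^2$ says nothing about $|\partial u|$; this step, not ``bookkeeping'', is where the power of $M$ comes from.

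\textbf{Minor points.} The obstacle you single out is not there: the estimate differentiates the equation only once, giving $a\,u^{i\bar j}u_{i\bar jk}$ with no $\mu''$. The structural condition on $\mu$ is needed only for the second-order estimates. Also, the $f_uu_k$ term contributes $-C$, not $-C/|\partial u|$, after dividing by $|\partial u|^2$.
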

\begin{proof}
Consider the quantity
\[
G = \log|\de u|^{2}+\frac{1}{2}\log(u^{2}+5M^{2})+A\rho,
\]
where $A$ is a constant to be determined later. Let $p_{0}=(z_{0},t_{0})$ be a maximum point of $G$ in $\bar{Q}_{T}$. By Lemma \ref{boundary gradient estimate}, we assume without loss of generality that $p_{0}\notin\de^{*}Q_{T}$. Then $z_{0}$ is an interior point of $\Omega_{t_{0}}$. For notational convenience, we write
\[
\mu' = \mu'\big(\MA(u)\big), \ \ \ \MA=\MA(u).
\]
Define the parabolic operator
\begin{equation}
\label{definition of L}
L = F^{i\ov{j}}\nabla_{\ov{j}}\nabla_{i}-\de_{t}  \quad  \text{where}\quad \ F^{i\ov{j}} = \mu'\cdot\MA\cdot u^{i\ov{j}}.
\end{equation}
By the maximum principle, we have $LG\leq0$ at $p_{0}$. Choose a holomorphic normal coordinate system $(U,\{z_{i}\}_{i=1}^{n})$ centered at $z_{0}$ such that
\[
g_{i\ov{j}} = \delta_{ij}, \ \ \
\de_{k}g_{i\ov{j}} = 0, \ \ \
u_{i\ov{j}} = u_{i\ov{i}}\delta_{ij} \ \ \
\text{at $z_{0}$}.
\]
The proposition will follow if we can show $|\de u|^{2}\leq CM^{2p}$ at $p_{0}$; indeed, if this holds, then we have:
\[
\log\abs{\p u}^2 = G - \frac{1}{2}\log(u^2 + 5M^2) - A\rho \leq \log \abs{\p u(p_0)}^2 + \frac{1}{2}\log \frac{6M^2}{5M^2} + C \leq \log M^{2p} + C,
\]
which implies $\abs{\p u} \leq C M^{p}$ on all of $\ov{Q}_T$.

We proceed to bond $\abs{\p u}^2$ at $p_0$. Without loss of generality, we assume that $|\de u|^{2}\geq 1$. We compute
\begin{equation}\label{gradient estimate eqn 4}
\begin{split}
0 \geq LG
& = \frac{L(|\de u|^{2})}{|\de u|^{2}}-\frac{F^{i\ov{i}}|(|\de u|^{2})_{i}|^{2}}{|\de u|^{4}}+AF^{i\ov{i}}\rho_{i\ov{i}} \\
& +\frac{uF^{i\ov{i}}u_{i\ov{i}}}{u^{2}+5M^{2}}+\left(1-\frac{2u^{2}}{u^{2}+5M^{2}}\right)\frac{F^{i\ov{i}}|u_{i}|^{2}}{u^{2}+5M^{2}}-\frac{uu_{t}}{u^{2}+5M^{2}}.
\end{split}
\end{equation}
Let us deal with the three terms in the second line. By Corollary \ref{MA estimate}, we have
\[
F^{i\ov{i}}=\mu'\cdot\MA\cdot u^{i\ov{i}}\leq CM u^{i\ov{i}},
\]
and then
\[
\frac{uF^{i\ov{i}}u_{i\ov{i}}}{u^{2}+5M^{2}} \geq -\frac{M\cdot CM \cdot n}{u^{2}+5M^{2}} \geq -C.
\]
It is clear that
\[
\left(1-\frac{2u^{2}}{u^{2}+5M^{2}}\right)\frac{F^{i\ov{i}}|u_{i}|^{2}}{u^{2}+5M^{2}}
\geq \left(1-\frac{2M^{2}}{5M^{2}}\right)\frac{F^{i\ov{i}}|u_{i}|^{2}}{u^{2}+5M^{2}}
\geq \frac{F^{i\ov{i}}|u_{i}|^{2}}{2(u^{2}+5M^{2})}.
\]
By Proposition \ref{u t estimate}, we have $|u_{t}|\leq CM$ and so
\[
-\frac{uu_{t}}{u^{2}+5M^{2}} \geq -\frac{M\cdot CM}{u^{2}+5M^{2}} \geq -C.
\]
Substituting the above into \eqref{gradient estimate eqn 4},
\begin{equation}\label{gradient estimate eqn 1}
0 \geq \frac{L(|\de u|^{2})}{|\de u|^{2}}-\frac{F^{i\ov{i}}|(|\de u|^{2})_{i}|^{2}}{|\de u|^{4}}+AF^{i\ov{i}}\rho_{i\ov{i}}+\frac{F^{i\ov{i}}|u_{i}|^{2}}{2(u^{2}+5M^{2})}-C.
\end{equation}
We now discuss the first term in the above. By direct calculation, we have
\[
\begin{split}
L(|\de u|^{2}) = {} & \sum_{k}\left(F^{i\ov{i}}u_{k}u_{\ov{k}i\ov{i}}-u_{k}u_{\ov{k}t}+F^{i\ov{i}}u_{\ov{k}}u_{ki\ov{i}}-u_{\ov{k}}u_{kt}\right)
+\sum_{k}F^{i\ov{i}}(|u_{ik}|^{2}+|u_{i\ov{i}}|^{2}). \\
\end{split}
\]
Since $(\Omega,\omega)$ is K\"ahler, we have the commutation formula for the covariant derivatives:
\[
u_{\ov{k}i\ov{i}} = u_{i\ov{i}\ov{k}}, \ \ \
u_{ki\ov{i}} = u_{ik\ov{i}} = u_{i\ov{i}k}+\sum_{p}R_{i\ov{i}k\ov{p}}u_{p}
\]
and so
\begin{align*}
&L(|\de u|^{2}) \\
= {} & \sum_{k}\left(F^{i\ov{i}}u_{k}u_{i\ov{i}\ov{k}}-u_{k}u_{\ov{k}t}+F^{i\ov{i}}u_{\ov{k}}u_{i\ov{i}k}-u_{\ov{k}}u_{kt}\right) +\sum_{k}F^{i\ov{i}}R_{i\ov{i}k\ov{p}}u_{p}u_{\ov{k}} +\sum_{k}F^{i\ov{i}}(|u_{ik}|^{2}+|u_{i\ov{i}}|^{2}) \\
= {} & 2\sum_{k}\mathrm{Re}\left(F^{i\ov{i}}u_{k}u_{i\ov{i}\ov{k}}-u_{k}u_{t\ov{k}}\right) +\sum_{k}F^{i\ov{i}}R_{i\ov{i}k\ov{p}}u_{p}u_{\ov{k}}+\sum_{k}F^{i\ov{i}}(|u_{ik}|^{2}+|u_{i\ov{i}}|^{2}) \\
\geq {} & 2\sum_{k}\mathrm{Re}\left(F^{i\ov{i}}u_{k}u_{i\ov{i}\ov{k}}-u_{k}u_{t\ov{k}}\right)-C|\de u|^{2}\sum_{i}F^{i\ov{i}}+\sum_{k}F^{i\ov{i}}|u_{ik}|^{2}.
\end{align*}
To eliminate the third order terms, apply $\nabla_{\ov{k}}$ to the flow \eqref{parabolic Dirichlet problem Q T} to see
\[
F^{i\ov{i}}u_{i\ov{i}\ov{k}}-u_{t\ov{k}} = f_{\ov{k}}+f_{u}u_{\ov{k}}.
\]
Combining the above, using Cauchy--Schwarz and the bounds on $f_u, f_k$, we obtain
\begin{equation}\label{gradient computation}
L(|\de u|^{2}) \geq \sum_{k}F^{i\ov{i}}|u_{ik}|^{2}-C|\de u|^{2}\sum_{i}F^{i\ov{i}}-C|\de u|^{2}.
\end{equation}
We now substitute this into \eqref{gradient estimate eqn 1}, using in addition that $\ddbar\rho\geq\ve_{0}\omega$, to get:
\begin{equation}
0 \geq \frac{\sum_{k}F^{i\ov{i}}|u_{ik}|^{2}}{|\de u|^{2}}
-\frac{F^{i\ov{i}}|(|\de u|^{2})_{i}|^{2}}{|\de u|^{4}}
+\frac{F^{i\ov{i}}|u_{i}|^{2}}{2u^{2}+10M^{2}}+(A\ve_{0}-C)\sum_{i}F^{i\ov{i}}-C.
\end{equation}
Choosing $A$ sufficiently large such that $A\ve_{0}-C\geq1$, and recalling that $F^{i\ov{i}}=\mu'\cdot\MA\cdot u^{i\ov{i}}$:
\begin{equation}\label{gradient estimate eqn 2}
0 \geq \frac{\sum_{k}u^{i\ov{i}}|u_{ik}|^{2}}{|\de u|^{2}}
-\frac{u^{i\ov{i}}|(|\de u|^{2})_{i}|^{2}}{|\de u|^{4}}
+\frac{u^{i\ov{i}}|u_{i}|^{2}}{2u^{2}+10M^{2}}+\sum_{i}u^{i\ov{i}}-\frac{C}{\mu'\cdot\MA}.
\end{equation}
To simplify the negative second term, we apply Cauchy--Schwarz to the first term as:
\[
\begin{split}
|\de u|^{2}\sum_{k}u^{i\ov{i}}|u_{ik}|^{2}
\geq {} & u^{i\ov{i}}\bigg|\sum_{k}u_{ik}u_{\ov{k}}\bigg|^{2} \\[-1.6mm]
= {} & u^{i\ov{i}}\bigg|(|\de u|^{2})_{i}-\sum_{k}u_{k}u_{i\ov{k}}\bigg|^{2} \\[-1mm]
= {} & u^{i\ov{i}}\left|(|\de u|^{2})_{i}-u_{i}u_{i\ov{i}}\right|^{2} \\[2mm]
= {} & u^{i\ov{i}}|(|\de u|^{2})_{i}|^{2}-2\mathrm{Re}\big((|\de u|^{2})_{i}u_{\ov{i}}\big)+|u_{i}|^{2}u_{i\ov{i}} \\[2mm]
\geq {} & u^{i\ov{i}}|(|\de u|^{2})_{i}|^{2}-2\mathrm{Re}\big((|\de u|^{2})_{i}u_{\ov{i}}\big),
\end{split}
\]
which implies
\[
\frac{\sum_{k}u^{i\ov{i}}|u_{ik}|^{2}}{|\de u|^{2}}
\geq \frac{u^{i\ov{i}}|(|\de u|^{2})_{i}|^{2}}{|\de u|^{4}}-\frac{2\mathrm{Re}\big((|\de u|^{2})_{i}u_{\ov{i}}\big)}{|\de u|^{4}}.
\]
Substituting this into \eqref{gradient estimate eqn 2} and simplifying gives,
\begin{equation}\label{gradient estimate eqn new}
0 \geq -\frac{2\mathrm{Re}\big((|\de u|^{2})_{i}u_{\ov{i}}\big)}{|\de u|^{4}}
+\frac{u^{i\ov{i}}|u_{i}|^{2}}{2u^{2}+10M^{2}}+\sum_{i}u^{i\ov{i}}-\frac{C}{\mu'\cdot\MA}.
\end{equation}
Since $G_{i}=0$ at $p_{0}$, we have
\[
{\frac{(|\de u|^{2})_{i}}{|\de u|^{2}}+\frac{uu_{i}}{u^2+5M^2}+A\rho_{i}}=0.
\]
Using this in \eqref{gradient estimate eqn new}, along with Cauchy--Schwarz and $\mu'\cdot\MA\geq C^{-1}$ (see Corollary \ref{MA estimate}), we obtain:
\begin{align*}
0 &\geq \frac{2u|\de u|^{2}}{(u^{2}+5M^{2})|\de u|^{2}}+\frac{2\mathrm{Re}(A\rho_{i}u_{\ov{i}}\big)}{|\de u|^{2}}
+\frac{u^{i\ov{i}}|u_{i}|^{2}}{2u^{2}+10M^{2}}+\sum_{i}u^{i\ov{i}}-\frac{C}{\mu'\cdot\MA}\\
&\geq  \frac{u^{i\ov{i}}|u_{i}|^{2}}{2u^{2}+10M^{2}}+\sum_{i}u^{i\ov{i}} - C.
\end{align*}
This implies both
\begin{equation}\label{gradient estimate eqn 3}
\sum_{i}u^{i\ov{i}} \leq C \quad \text{ and }\quad u^{i\ov{i}}|u_{i}|^{2} \leq CM^{2}.
\end{equation}
Now by Corollary \ref{MA estimate},
\[
\prod_{j}u_{j\ov{j}} = \MA \leq CM^{p}.
\]
Combining this with \eqref{gradient estimate eqn 3}, we have for each $i$ that:
\[
u_{i\ov{i}} = \left(\prod_{j}u_{j\ov{j}}\right) \cdot \left(\prod_{j\neq i}u^{j\ov{j}} \right)
\leq \left(\prod_{j}u_{j\ov{j}}\right)\cdot\left(\sum_{j}u^{j\ov{j}} \right)^{n-1}
\leq CM^{p},
\]
so that
\[
u^{i\ov{i}} \geq \frac{1}{CM^{p}}.
\]
Using \eqref{gradient estimate eqn 3} again,
\[
\frac{|\de u|^{2}}{CM^{p}} \leq u^{i\ov{i}}|u_{i}|^{2} \leq CM^{2}.
\]
Combining this with $p > n + 1 \geq 2$ gives
\[
|\de u|^{2} \leq CM^{p + 2} \leq CM^{2p},
\]
as required.
\end{proof}

\subsection{Hessian estimate} We are left to bound the (real) spatial Hessian of $u$. We again start with the boundary estimate:
\begin{lemma}\label{boundary Hessian estimate}
Let $u$ be a smooth solution of \eqref{parabolic Dirichlet problem Q T}. Then there exists a constant $C_{M}$, depending only on $u_{0}$, $f$, $\mu$, $(\Omega,\omega)$, and an upper bound for $\|u\|_{C^{0}(\ov{Q}_T)}$, such that
\[
\sup_{\de^{*}Q_{T}}|\nabla^{2}u| \leq C_{M}.
\]
\end{lemma}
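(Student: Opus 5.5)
On $\Omega_0$ the bound is immediate, since there $u=u_0$ is smooth. The real work is on $\bar\Gamma_T$. I would follow the classical Caffarelli--Kohn--Nirenberg--Spruck / Guan scheme for boundary second-order estimates, adapted to the parabolic operator $L$ from \eqref{definition of L}. The available inputs are the bounds on $|u|$ and $|u_t|$ (Propositions \ref{uniform estimate}, \ref{u t estimate}), the two-sided bounds on $\MA(u)$ and $\mu'\cdot\MA$ (Corollary \ref{MA estimate}), and the gradient bound (Proposition \ref{gradient estimate}). Together these make $F$ uniformly elliptic in the Monge--Ampère sense, with all constants depending on $M$.

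Fix $(z_0,t_0)\in\Gamma_T$ and choose local coordinates near $z_0$ in which $\partial\Omega$ is locally $\{x_{2n}=\rho\text{-graph}\}$ with inner normal along $x_{2n}$. The proof splits by type of second derivative.

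1. \textbf{Tangential--tangential derivatives.} Since $u=0$ on $\partial\Omega$ for every $t$, we have $u_{\alpha\beta}=-u_{\nu}\,\mathrm{II}_{\alpha\beta}$. The gradient bound then gives this directly.

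2. \textbf{Tangential--normal derivatives.} Apply a tangential vector field $T=\partial_\alpha-(\rho_\alpha/\rho_{2n})\partial_{2n}$ to the flow. Using the bounds on $|\nabla f|$ and $|f_u|$, one gets $|L(Tu)|\le C(1+\sum_i F^{i\bar i})$ near $z_0$.
   Build a barrier $\Psi = A_1 v + A_2|z-z_0|^2$ with $v=(u-\underline u)+t\rho - N\rho^2$, or simply $v$ built from $\rho$. Here $\underline u = u_0+(AM)^{p/n}\rho$ is the subsolution from Lemma \ref{boundary gradient estimate}.
   Compare $\pm Tu$ with $\Psi$ on the parabolic boundary of $(\Omega\cap B_\delta(z_0))\times(0,t_0]$. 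The key inequality is $L\Psi\le -c(1+\sum F^{i\bar i})$. It comes from the strict plurisubharmonicity of $\rho$: we have $F^{i\bar i}\rho_{i\bar i}\ge \varepsilon_0\sum F^{i\bar i}$, and $\sum F^{i\bar i}\ge n(\mu'\MA)\MA^{-1/n}\ge c_M>0$ by Corollary \ref{MA estimate}. The time derivatives of $u$ and $\underline u$ are controlled.
   The parabolic maximum principle then gives $|\partial_{2n}(Tu)(z_0,t_0)|\le C_M$.

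3. \textbf{Normal--normal derivative.} Since $\MA(u)$ is bounded above and below, it suffices to bound $u_{n\bar n}$ from above. Expand $\det(u_{i\bar j})=u_{n\bar n}\det(u_{\alpha\bar\beta})_{\alpha,\beta<n}+O(C_M)$, using the bounds from steps 1 and 2. It then remains to show that the tangential minor $(u_{\alpha\bar\beta})$ is bounded below by $c_M>0$ on $\partial\Omega$.

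I expect step 3 to be the main obstacle. The standard approach, as in Caffarelli--Kohn--Nirenberg--Spruck and Guan, runs as follows. Take the minimum over $\Gamma_T$ of the smallest eigenvalue of the tangential Levi form $u_{\xi\bar\xi}=-u_\nu\,\mathcal L_\rho(\xi,\bar\xi)$. Note $-u_\nu\ge -\underline u_\nu\ge c>0$ by comparison with the subsolution, since $u\ge\underline u$ and the Levi form of $\rho$ is positive. Hence $u_{\xi\bar\xi}\ge c\,\mathcal L_\rho$, and the tangential minor is actually bounded below directly.

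This simplification works because $u<0$ and $u=0$ on the boundary. For the lower bound on the normal derivative we need $|u_\nu|\ge c$. Without it, I would instead use the Hopf-type comparison $u\le \epsilon\rho$, obtained from $\MA(u)\ge e^{-CM}$ and a supersolution argument, or fall back to Guan's barrier argument applied at the minimizing point. Once the minor is bounded below, $u_{n\bar n}\le C_M$ follows. Then all mixed real second derivatives are bounded, giving $\sup_{\partial^*Q_T}|\nabla^2 u|\le C_M$.
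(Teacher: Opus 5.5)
Your skeleton (tangential--tangential from the boundary condition, tangential--normal via a barrier for $\pm Tu$, normal--normal via the determinant expansion and a lower bound on $|u_\nu|$) matches the paper's, and your Step 1 is the paper's. However, Step 2 has a genuine gap: the bound $|L(Tu)|\le C(1+\sum_i F^{i\ov i})$ does not follow from differentiating the flow. Since $T=\de_{x_\alpha}+a\,\de_{x_{2n}}$ with $a=-\rho_{x_\alpha}/\rho_{x_{2n}}$ non-constant, $L(Tu)$ contains the cross term $2\mathrm{Re}\big(F^{i\ov j}a_i\,\de_{\ov j}\de_{x_{2n}}u\big)$, in addition to $F^{i\ov j}T(u_{i\ov j})-T(u_t)$, which the differentiated equation does control. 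Because $\de_{\ov j}\de_{x_{2n}}u=\sqrt{-1}(u_{n\ov j}-u_{\ov j\ov n})$, the first piece contracts to a multiple of $\mu'\cdot\MA\cdot a_n$ and is harmless. The second piece is $F^{i\ov j}a_iu_{\ov j\ov n}$, and the $u_{\ov j\ov n}$ are not entries of the complex Hessian. Neither the equation nor $\sum_iF^{i\ov i}$ controls them; they are among the second derivatives you are trying to bound. In the real Monge--Amp\`ere setting the analogous term is harmless because $F^{ij}u_{jn}$ is a multiple of $\delta_{in}$, and this is exactly what breaks in the complex case. Applying $L$ to your barrier $A_1v+A_2|z-z_0|^2$ produces no quadratic good term to absorb this, so the comparison does not close.

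The missing idea is a quadratic correction in the barrier. With $U=u-u_0$, the paper takes
\[
w_\pm=-A\rho+K|z|^2\pm T_\alpha U-(\de_{x_{2n-1}}U)^2 .
\]
Writing $\de_{x_{2n}}=\sqrt{-1}(2\de_{z_n}-\de_{x_{2n-1}})$, the bad term is bounded by $2|F^{i\ov j}a_i\de_{\ov j}\de_{x_{2n-1}}U|+C(1+\sum_iF^{i\ov i})$. Meanwhile $L\big(-(\de_{x_{2n-1}}U)^2\big)$ produces $-2F^{i\ov j}(\de_i\de_{x_{2n-1}}U)(\de_{\ov j}\de_{x_{2n-1}}U)$ plus terms bounded by $C(1+\sum_iF^{i\ov i})$. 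Cauchy--Schwarz then absorbs the bad term at the cost of $C\sum_iF^{i\ov i}$, which $-AF^{i\ov j}\rho_{i\ov j}$ together with $\sum_iF^{i\ov i}\ge c_M$ handles as you describe. The boundary comparison survives because on $\de\Omega$ one has $T_{2n-1}U=0$ and $\rho_{x_{2n-1}}(z_0)=0$. Hence $|\de_{x_{2n-1}}U|\le C|z|$ there, and the square is dominated by $K|z|^2$; using $U$ rather than $u$ also makes every term vanish on $\{t=0\}$.

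Separately, in Step 3 your first argument has the inequality backwards. The relation $u\ge\underline u$ with equal boundary values gives $|u_\nu|\le|\underline u_\nu|$, which is an upper bound. The needed lower bound comes from your fallback, which is the paper's argument. Since $\MA(u)\ge e^{-CM}$, for $A$ large one has $\MA(\rho/A)\le\MA(u)$ on each time slice, hence $u\le\rho/A$ and $-u_{x_{2n}}(0)\ge c_M>0$. Therefore $u_{i\ov j}(0)=-u_{x_{2n}}(0)\rho_{i\ov j}(0)\ge c\,\delta_{ij}$ for $i,j<n$.
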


\begin{proof}
For notational convenience, we denote the constant $C_{M}$ by just $C$ throughout the proof. Since $u=u_{0}$ in $\Omega_{0}$, then
\[
\sup_{\Omega_{0}}|\nabla^{2}u| = \sup_{\Omega_{0}}|\nabla^{2}u_{0}| \leq C.
\]
Since $\de^{*}Q_{T}=\Omega_{0}\cup\bar{\Gamma}_{T}$, it will suffice to bound the Hessian on $\ov{\Gamma}_T$. Fix a point $(z_{0},t_{0})\in\bar{\Gamma}_{T}=\de\Omega\times[0,T]$ then, and let $(\{z_{i}\}_{i=1}^{n},B_{R})$ be a holomorphic coordinate system centered at $z_{0}$ such that
\begin{itemize}\setlength{\itemsep}{1mm}
\item $B_{R}$ is an Euclidean ball of radius $R$;
\item $g_{i\ov{j}}(0)=\delta_{ij}$ and $\frac{1}{2}(\delta_{ij})\leq (g_{i\ov{j}})\leq 2(\delta_{ij})$ in $B_{R}\cap\Omega$;
\item $\rho_{x_{\alpha}}(0)=0$ for $\alpha=1,\ldots,2n-1$, where $z_i = x_{2i-1}+\sqrt{-1}x_{2i}$ for $i=1,\ldots,n$; and
\item $x_{2n} > 0$ in $B_{R}\cap\Omega$.
\end{itemize}
After rescaling $\rho$ by a positive constant if necessary, we can assume
\[
\rho(z) = -x_{2n} + O(|z|^2) \ \ \text{in $B_{R}\cap\Omega$}.
\]
For $\alpha=1,\ldots,2n-1$, define the tangential operators:
\[
T_\alpha = \frac{\p}{\p x_\alpha} - \frac{\rho_{x_\alpha}}{\rho_{x_{2n}}}\frac{\p}{\p x_{2n}}.
\]
We split the argument into three steps.\\

\noindent
{\bf Step 1.} Bounding the tangent-tangent derivatives.\\

Since $u\equiv0$ on $\de\Omega_{t_{0}}$, we have $T_{\alpha}T_{\beta}u(0)=0$ for $\alpha,\beta=1,\ldots,2n-1$. This implies
\[
\p_{x_\alpha}\p_{x_\beta}u(0)+\p_{x_{2n}}u(0)\cdot\p_{x_\alpha}\p_{x_\beta} \rho(0) = 0.
\]
Combining this with Proposition \ref{gradient estimate} gives
\[
|\p_{x_\alpha}\p_{x_\beta}u(0)|  \leq C.
\]

\noindent
{\bf Step 2.} Bounding the tangent-normal derivatives.\\

Set $U=u-u_{0}$ and consider the following functions:
\[
w_{\pm} = -A\rho+K|z|^{2}\pm T_{\alpha}U-(\de_{x_{2n-1}}U)^{2},
\]
where $A$ and $K$ are constants to be determined later. We claim that
\begin{equation}\label{tangent normal claim}
\begin{cases}
\ Lw_{\pm} \leq 0 & \mbox{in $(B_{R}\cap\Omega)\times(0,T)$}, \\[1mm]
\ w_{\pm} \geq 0 & \mbox{on $\de^{*}\big((B_{R}\cap\Omega)\times(0,T)\big)$},
\end{cases}
\end{equation}
where $L$ is the operator defined by \eqref{definition of L}. Given this claim, the comparison principle shows that $w_{\pm}\geq0$ in $(B_{R}\cap\Omega)\times(0,T)$ and so
\[
\pm T_{\alpha}U \leq -A\rho+K|z|^{2}-(\de_{x_{2n-1}}U)^{2} \leq -A\rho+K|z|^{2}.
\]
At $0$, observe that both sides are $0$ and so
\[
|\de_{x_{2n}}T_{\alpha}U(0)| \leq |\de_{x_{2n}}(-A\rho+K|z|^{2})(0)| \leq C.
\]
By the definition of $T_{\alpha}$ and Proposition \ref{gradient estimate},
\begin{align*}
|\de_{x_{2n}}T_{\alpha}U(0)|
= {} & |\de_{x_{2n}}T_{\alpha}u(0)-\de_{x_{2n}}T_{\alpha}u_{0}(0)| \\
= {} & |\de_{x_{2n}}\de_{x_{\alpha}}u(0)+\de_{x_{2n}}\de_{x_{\alpha}}\rho(0)\cdot\de_{x_{2n}}u(0)-\de_{x_{2n}}T_{\alpha}u_{0}(0)| \\
\geq {} & |\de_{x_{2n}}\de_{x_{\alpha}}u(0)|-C.
\end{align*}
Then we obtain
\[
|\de_{x_{2n}}\de_{x_{\alpha}}u(0)| \leq |\de_{x_{2n}}T_{\alpha}U(0)|+C \leq C,
\]
as required. We are left to verify the two parts of the claim \eqref{tangent normal claim}; we do so only for $w_{+}$, as the argument for $w_{-}$ is almost identical.

\bigskip
\noindent
{\bf Step 2.1.} $w_{+}\geq0$ on $\de^{*}\big((B_{R}\cap\Omega)\times(0,T)\big)$.
\bigskip

Notice that
\begin{align*}
& \de^{*}\big((B_{R}\cap\Omega)\times(0,T)\big) \\
= {} & \big((B_{R}\cap\Omega)\times\{0\}\big) \cup \big((B_{R}\cap\de\Omega)\times[0,T]\big) \cup
\big((\de B_{R}\cap\Omega)\times[0,T]\big).
\end{align*}
On $\big((B_{R}\cap\Omega)\times\{0\}\big)$, we have $U\equiv0$, which implies $T_{\alpha}U\equiv0$ and $\de_{x_{2n}}U\equiv0$, and then
\[
w_{+} = -A\rho+C|z|^{2} \geq 0.
\]
Since $U\equiv0$ on $\de\Omega\times[0,T]$, then $T_{2n-1}U=0$ and so
\[
|\de_{x_{2n-1}}U| = \left|\frac{\rho_{x_{2n-1}}}{\rho_{x_{2n}}}\cdot\de_{x_{2n}}U\right|
\leq C|\rho_{x_{2n-1}}| \leq C|z| \ \ \text{on $(B_{R}\cap\de\Omega)\times[0,T]$},
\]
where we used Proposition \ref{gradient estimate} and $\rho_{x_{2n-1}}(0)=0$. On $(B_{R}\cap\de\Omega)\times[0,T]$, using $T_{\alpha}U\equiv0$ and choosing $K$ sufficiently large, we have
\[
w_{+} = -A\rho+K|z|^{2}-(\de_{x_{2n-1}}U)^{2}
\geq (K-C)|z|^{2} \geq 0.
\]
On $(\de B_{R}\cap\Omega)\times[0,T]$, by Proposition \ref{gradient estimate} and increasing $K$ if needed, we have
\[
w_{+} \geq -A\rho+KR^{2}-C \geq 0.
\]
Notice that the value of $K$ is fixed now.

\bigskip
\noindent
{\bf Step 2.2.} $Lw_{+}\leq 0$ in $(B_{R}\cap\Omega)\times(0,T)$.
\bigskip

Since $(\Omega,\omega)$ is K\"ahler, then
\[
L = F^{i\ov{j}}\nabla_{\ov{j}}\nabla_{i}-\de_{t} = F^{i\ov{j}}\de_{\ov{j}}\de_{i}-\de_{t}.
\]
By direct calculation,
\begin{equation}\label{L w eqn 1}
\begin{split}
Lw_{+} = {} & -AF^{i\ov{j}}\rho_{i\ov{j}}+K\sum_{i}F^{i\ov{i}}+L(T_{\alpha}U) \\
& -2(\de_{x_{2n-1}}U)L\big(\de_{x_{2n-1}}U\big)-2F^{i\ov{j}}(\de_{i}\de_{x_{2n-1}}U)(\de_{\ov{j}}\de_{x_{2n-1}}U).
\end{split}
\end{equation}
For the third term in \eqref{L w eqn 1}, it is clear that
\begin{equation}\label{L w eqn 2}
L(T_{\alpha}U) = L(T_{\alpha}u)-L(T_{\alpha}u_{0}) \leq L(T_{\alpha}u)+C\sum_{i}F^{i\ov{i}}.
\end{equation}
For convenience, write $a=-\frac{\rho_{x_\alpha}}{\rho_{x_{2n}}}$, so that $T_{\alpha}=\de_{x_{\alpha}}+a\de_{x_{2n}}$. Then
\begin{equation}\label{L w eqn 3}
\begin{split}
L(T_{\alpha}u & ) =  F^{i\ov{j}}\de_{\ov{j}}\de_{i}(\de_{x_{\alpha}}u+a\de_{x_{2n}}u)
-\de_{t}(\de_{x_{\alpha}}u+a\de_{x_{2n}}u)  \\
= {} & F^{i\ov{j}}T_{\alpha}(u_{i\ov{j}})+2\mathrm{Re}\big(F^{i\ov{j}}a_{i}\de_{\ov{j}}\de_{x_{2n}}u\big)
+(F^{i\ov{j}}a_{i\ov{j}})\de_{x_{2n}}u-T_{\alpha}(u_{t}) \\
\leq {} & F^{i\ov{j}}T_{\alpha}(u_{i\ov{j}})-T_{\alpha}(u_{t})
+2\mathrm{Re}\big(F^{i\ov{j}}a_{i}\de_{\ov{j}}\de_{x_{2n}}u\big)+C\sum_{i}F^{i\ov{i}}.
\end{split}
\end{equation}
Applying $T_{\alpha}$ to the equation in \eqref{parabolic Dirichlet problem Q T},
\[
\mu'(\MA(u))\cdot T_{\alpha}\left(\frac{\det u_{i\ov{j}}}{\det g_{i\ov{j}}}\right)-T_{\alpha}(u_{t}) = T_{\alpha}\big(f(z,t,u)\big).
\]
Combining this with Corollary \ref{MA estimate}, we obtain
\begin{equation}\label{L w eqn 4}
\begin{split}
& F^{i\ov{j}}T_{\alpha}(u_{i\ov{j}})-T_{\alpha}(u_{t}) \\
= {} & T_{\alpha}\big(f(z,t,u)\big)-\mu'(\MA(u))\cdot\det u_{i\ov{j}}\cdot T_{\alpha}(\det g^{i\ov{j}}) \\
= {} & T_{\alpha}\big(f(z,t,u)\big)-\mu'(\MA(u))\cdot\MA(u)\cdot \det g_{i\ov{j}}\cdot T_{\alpha}(\det g^{i\ov{j}}) \\[1mm]
\leq {} & C.
\end{split}
\end{equation}
Since $\de_{x_{2n}}=\sqrt{-1}(2\de_{z_{n}}-\de_{x_{2n-1}})$, then
\[
\de_{\ov{j}}\de_{x_{2n}}u = \sqrt{-1}(2u_{n\ov{j}}-\de_{\ov{j}}\de_{x_{2n-1}}u)
\]
and so
\[
\begin{split}
2\mathrm{Re}\big(u^{i\ov{j}}a_{i}\de_{\ov{j}}\de_{x_{2n}}u\big)
= {} & 2\mathrm{Im}\big(u^{i\ov{j}}a_{i}(\de_{\ov{j}}\de_{x_{2n-1}}u-2u_{n\ov{j}})\big) \\[0.6mm]
= {} & 2\mathrm{Im}\big(u^{i\ov{j}}a_{i}\de_{\ov{j}}\de_{x_{2n-1}}u\big)-4\mathrm{Im}(u^{i\ov{j}}a_{i}u_{n\ov{j}}) \\[0.6mm]
= {} & 2\mathrm{Im}\big(u^{i\ov{j}}a_{i}\de_{\ov{j}}\de_{x_{2n-1}}(U-u_{0})\big)-4\mathrm{Im}a_{n} \\[0.6mm]
\leq {} & 2\big|u^{i\ov{j}}a_{i}\de_{\ov{j}}\de_{x_{2n-1}}U\big|+C\sum_{i}u^{i\ov{i}}+C.
\end{split}
\]
Combining this with $F^{i\ov{j}}=\mu'\cdot\MA\cdot u^{i\ov{j}}$ and Corollary \ref{MA estimate},
\begin{equation}\label{L w eqn 5}
\begin{split}
2\mathrm{Re}\big(F^{i\ov{j}}a_{i}\de_{\ov{j}}\de_{x_{2n}}u\big)
\leq {} & 2\big|F^{i\ov{j}}a_{i}\de_{\ov{j}}\de_{x_{2n-1}}U\big|+C\sum_{i}F^{i\ov{i}}+C\cdot\mu'\cdot\MA \\
\leq {} & 2\big|F^{i\ov{j}}a_{i}\de_{\ov{j}}\de_{x_{2n-1}}U\big|+C\sum_{i}F^{i\ov{i}}+C.
\end{split}
\end{equation}
Substituting \eqref{L w eqn 3}, \eqref{L w eqn 4} and \eqref{L w eqn 5} into \eqref{L w eqn 2},
\begin{equation}\label{L w eqn 6}
L(T_{\alpha}U) \leq 2|F^{i\ov{j}}a_{i}\de_{\ov{j}}\de_{x_{2n-1}}U|+C\sum_{i}F^{i\ov{i}}+C.
\end{equation}
For the term $-2(\de_{x_{2n-1}}U)L\big(\de_{x_{2n-1}}U\big)$ in \eqref{L w eqn 1}, applying $\de_{x_{2n-1}}$ to \eqref{parabolic Dirichlet problem Q T} and arguing similarly to how we showed \eqref{L w eqn 4} above, we obtain
\[
\left|F^{i\ov{j}}\de_{x_{2n-1}}(u_{i\ov{j}})-\de_{x_{2n-1}}(u_{t})\right| \leq C.
\]
Thus
\[
\begin{split}
\left|L\big(\de_{x_{2n-1}}U\big)\right|
= {} & \left|L\big(\de_{x_{2n-1}}u\big)-L\big(\de_{x_{2n-1}}u_{0}\big)\right| \\[1mm]
= {} & \left|F^{i\ov{j}}\de_{x_{2n-1}}(u_{i\ov{j}})-\de_{x_{2n-1}}(u_{t})
-F^{i\ov{j}}\de_{i}\de_{\ov{j}}(\de_{x_{2n-1}}u_{0})\right|  \\[1mm]
\leq {} & C+C\sum_{i}F^{i\ov{i}}.
\end{split}
\]
Then
\begin{equation}\label{L w eqn 7}
-2(\de_{x_{2n-1}}U)L\big(\de_{x_{2n-1}}U\big) \leq -C\sum_{i}F^{i\ov{i}}-C.
\end{equation}
Substituting \eqref{L w eqn 6} and \eqref{L w eqn 7} into \eqref{L w eqn 1},
\[
\begin{split}
Lw_{+} \leq {} & -AF^{i\ov{j}}\rho_{i\ov{j}}+2\left|F^{i\ov{j}}a_{i}\de_{\ov{j}}\de_{x_{2n-1}}U\right|+C\sum_{i}F^{i\ov{i}}+C \\
& -2F^{i\ov{j}}(\de_{i}\de_{x_{2n-1}}U)(\de_{\ov{j}}\de_{x_{2n-1}}U).
\end{split}
\]
By the Cauchy--Schwarz inequality,
\[
\begin{split}
2\left|F^{i\ov{j}}a_{i}\de_{\ov{j}}\de_{x_{2n-1}}U\right|
\leq {} & F^{i\ov{j}}(\de_{i}\de_{x_{2n-1}}U)(\de_{\ov{j}}\de_{x_{2n-1}}U)+F^{i\ov{j}}a_{i}a_{\ov{j}} \\
\leq {} & F^{i\ov{j}}(\de_{i}\de_{x_{2n-1}}U)(\de_{\ov{j}}\de_{x_{2n-1}}U)+C\sum_{i}F^{i\ov{i}}.
\end{split}
\]
Recalling $\ddbar\rho\geq\ve_{0}\omega$ and so $\rho_{i\ov{j}}\geq\ve_{0}g_{i\ov{j}}\geq(\ve_{0}/2)\delta_{ij}$,
\[
Lw_{+} \leq -\left(\frac{A\ve_{0}}{2}-C\right)\sum_{i}F^{i\ov{i}}+C.
\]
From Corollary \ref{MA estimate} and the arithmetic-geometric mean inequality, we obtain
\[
\sum_{i}F^{i\ov{i}} = \mu'\cdot\MA\cdot\sum_{i}u^{i\ov{i}} \geq \mu'\cdot\MA\cdot n\left(\det(u^{i\ov{j}})\right)^{\frac{1}{n}} \geq C^{-1}.
\]
After choosing $A$ sufficiently large, we are done.\\

\noindent
{\bf Step 3.} Bounding the normal-normal derivative.
\bigskip

Using Corollary \ref{MA estimate} and Steps 1 and 2, we see that
\[
\left|
\det\left([u_{i\ov{j}}(0)]_{1\leq i,j\leq n}\right)
-u_{n\ov{n}}(0)\cdot\det\left([u_{i\ov{j}}(0)]_{1\leq i,j\leq n-1}\right)
\right| \leq C
\]
and so
\[
u_{n\ov{n}}(0)\cdot\det\left([u_{i\ov{j}}(0)]_{1\leq i,j\leq n-1}\right) \leq C.
\]
We claim that
\begin{equation}\label{normal normal claim}
\det\left([u_{i\ov{j}}(0)]_{1\leq i,j\leq n-1}\right) \geq C^{-1}.
\end{equation}
This claim implies $u_{n\ov{n}}(0)\leq C$. Combining it with Steps 1 and 2, we obtain $|u_{x_{2n}x_{2n}}(0)|\leq C$, as required.

To show \eqref{normal normal claim}, we set $\ov{u}=\rho/A$. Using Corollary \ref{MA estimate} and choosing $A$ sufficiently large, we see that
\[
\begin{cases}
\ \MA(u) \geq \MA(\ov{u}) & \mbox{in $\Omega_{t_{0}}$}, \\[1mm]
\ u = \ov{u} = 0 & \mbox{on $\de\Omega_{t_{0}}$}.
\end{cases}
\]
By the comparison principle:
\[
u \leq \ov{u} \leq 0 \ \ \mbox{in $\Omega_{t_{0}}$}
\]
and so
\[
-u_{x_{2n}}(0) \geq -A\rho_{x_{2n}}(0) \geq C^{-1}.
\]
Since $u\equiv0$ on $\de\Omega_{t_{0}}$, then
\[
u_{i\ov{j}}(0) = -u_{x_{n}}(0)\cdot\rho_{i\ov{j}}(0)
\geq C^{-1}\delta_{i\ov{j}} \ \ \text{for $1\leq i,j\leq n-1$},
\]
which implies \eqref{normal normal claim}.
\end{proof}

We now bound the complex Hessian in the interior:

\begin{proposition}\label{complex Hessian estimate}
Let $u$ be a smooth solution of \eqref{parabolic Dirichlet problem Q T}. Then there exists a positive constant $C_{M}$ depending only on an upper bound of $\|u\|_{C^{0}(\ov{Q}_T)}$, $u_{0}$, $f$, $\mu$ and $(\Omega,\omega)$ such that
\[
C_{M}^{-1}g_{i\ov{j}} \leq u_{i\ov{j}} \leq C_{M}g_{i\ov{j}}
\ \ \text{in $\bar{Q}_{T}$}.
\]
\end{proposition}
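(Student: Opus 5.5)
The two-sided bound reduces to an upper bound on $\Delta_\omega u = \operatorname{tr}_\omega \ddbar u$. Indeed, once $u_{i\ov{i}} \leq \operatorname{tr}_\omega\ddbar u \leq C_M$ is known, Corollary \ref{MA estimate} gives $\prod_j u_{j\ov{j}} = \MA(u) \geq e^{-CM}$. Hence each eigenvalue satisfies $u_{i\ov{i}} \geq e^{-CM}/C_M^{n-1}$, which is the lower bound. So the whole task is a maximum principle argument for $\operatorname{tr}_\omega\ddbar u$, using the operator $L = F^{i\ov{j}}\nabla_{\ov{j}}\nabla_i - \p_t$ from \eqref{definition of L}.

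I would consider
\[
Q = \log \operatorname{tr}_\omega \ddbar u + \frac{|\p u|^2}{CM^{2p}}\cdot 0 - A u + B\rho ,
\]
or more simply $Q = \log\operatorname{tr}_\omega\ddbar u - Au$, with $A$ large. If the maximum of $Q$ over $\ov{Q}_T$ lies on $\p^*Q_T$, then Lemma \ref{boundary Hessian estimate} bounds $\operatorname{tr}_\omega\ddbar u$ there. Combined with $\|u\|_{C^0}\leq M$, this bounds $Q$ and hence the trace everywhere.

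At an interior maximum, work in normal coordinates with $u_{i\ov{j}}$ diagonal, and differentiate the flow twice in $\p_k\p_{\ov{k}}$. Write $F(u) = \mu(e^{\log \MA})$ and $h(s) = \mu(e^s)$. Then
\[
F^{i\ov{i}} u_{i\ov{i}k\ov{k}} + h''\,|(\log\MA)_k|^2 + h'\,\big(-u^{i\ov{i}}u^{j\ov{j}}|u_{i\ov{j}k}|^2\big) - u_{tk\ov{k}} = \p_k\p_{\ov{k}}f,
\]
with $h' = \mu'\MA$.

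The key input is the structural condition $t^2\mu''+t\mu' \leq \frac{t}{p}\mu'$, i.e.\ $h'' \leq h'/p$. For $p<\infty$ the second-order term then has a bad sign only on the region $\MA > e^2$, and even there it is controlled: $h''|(\log\MA)_k|^2 \leq \frac{h'}{p}\,|u^{i\ov{i}}u_{i\ov{i}k}|^2$. By Cauchy--Schwarz this is at most $\frac{n}{p}\, h' \sum_i u^{i\ov{i}}u^{i\ov{i}}|u_{i\ov{i}k}|^2$. Since $p>n+1$, this is absorbed by the good concavity term from $-u^{i\ov{i}}u^{j\ov{j}}|u_{i\ov{j}k}|^2$, leaving a fraction $(1-\frac{n}{p})$ of it.

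That leftover fraction is then used, as in the Aubin--Yau/Siu computation, to absorb the gradient term $F^{i\ov{i}}|(\operatorname{tr}\ddbar u)_i|^2/(\operatorname{tr}\ddbar u)^2$. The curvature terms are bounded by $C\sum F^{i\ov{i}}$. The right-hand side $\Delta f$ involves $f_{uu}|\p u|^2 + f_u \Delta u + \dots$, which is bounded using Proposition \ref{gradient estimate} together with $\mu'\MA\geq C^{-1}$. The time derivative terms cancel against $-\p_t$.

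The term $-ALu = -A\big(F^{i\ov{i}}u_{i\ov{i}} - u_t\big) = -A\big(n\mu'\MA - u_t\big)$ is bounded by $C_M A$, using Corollary \ref{MA estimate} and Proposition \ref{u t estimate}. This term does not produce positivity, so for the good term I would use $+B\rho$ instead of, or in addition to, $-Au$. It gives $B F^{i\ov{i}}\rho_{i\ov{i}} \geq B\e_0\sum F^{i\ov{i}}$. Choosing $B$ larger than the curvature constant, we get
\[
0 \geq \sum_i F^{i\ov{i}} - C_M \quad \text{at the maximum.}
\]
Since $F^{i\ov{i}} \geq C^{-1} u^{i\ov{i}}$, this gives $\sum_i u^{i\ov{i}} \leq C_M$. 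With $\MA \leq CM^p$, the same product trick as in Proposition \ref{gradient estimate} yields $u_{i\ov{i}}\leq C_M$ at that point, and therefore everywhere via $Q$.

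The main obstacle is the sign of the second-derivative term $h''|(\log\MA)_k|^2$ in the region $\MA>e^2$, where $F = \MA^{1/p}$ is not concave in $\log\MA$ in the favourable direction. I expect the hypothesis $p>n+1$ together with the derivative condition on $\mu$ to be exactly what makes this absorption work. In the middle region $1\leq \MA\leq e^2$ I would rely on the same inequality $h''\leq h'/p$, which holds for all $t$ by Lemma \ref{construction of mu}.
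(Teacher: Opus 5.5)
The gap is in the absorption step. Write $S=\operatorname{tr}_\omega\ddbar u$, $\lambda_k=u_{k\ov{k}}$ and $h'=\mu'\MA$.

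After the $h''$-term has used up $\frac np$ of the diagonal concavity terms, what remains in $\frac{LS}{S}$ is $\frac{h'}{S}\sum_k\big[\sum_{i\neq j}u^{i\ov{i}}u^{j\ov{j}}|u_{i\ov{j}k}|^2+(1-\frac np)\sum_i(u^{i\ov{i}})^2|u_{i\ov{i}k}|^2\big]$. The Aubin--Yau bound $\sum_l\frac{F^{l\ov{l}}|S_l|^2}{S^2}\le\frac{h'}{S}\sum_{l,k}u^{l\ov{l}}u^{k\ov{k}}|u_{l\ov{k}k}|^2$, however, needs the fully diagonal terms $(u^{k\ov{k}})^2|u_{k\ov{k}k}|^2$ with coefficient one, and the constant cannot be improved. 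For instance, suppose $u_{1\ov{1}1}$ is the only nonzero third derivative at the point and $\lambda_1$ is almost all of $S$. Then the gradient term is essentially $\frac{h'}{S}(u^{1\ov{1}})^2|u_{1\ov{1}1}|^2$, while only $(1-\frac np)$ times this is available.

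So a fraction $1-\frac np<1$ of the concavity term does not control the gradient term pointwise. You must use the first-order condition at the maximum, which your argument never does. The naive way of using it does not close either. Splitting off $\frac np\frac{F^{l\ov{l}}|S_l|^2}{S^2}=\frac np B^2F^{l\ov{l}}|\rho_l|^2$ leaves a term of order $B^2\sum_lF^{l\ov{l}}$ against the gain $B\varepsilon_0\sum_lF^{l\ov{l}}$. Meanwhile $B$ must be large to beat the curvature term. For $p=\infty$, where $h''=0$, your argument is fine; the problem is the finite $p$ needed in Section~\ref{superlinear section}.

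The paper avoids this by maximizing $\log u_{\xi\ov{\xi}}+A\rho$ over unit vectors $\xi$, i.e.\ by working with the largest eigenvalue $u_{1\ov{1}}$. The bookkeeping then goes as follows.
\begin{itemize}
\item All diagonal ($i=j$) terms can be given away to the $h''$-term; this is where $p>n$ is used.
\item The off-diagonal terms with $j=1$, $i>1$ cancel exactly the gradient terms $F^{i\ov{i}}|u_{1\ov{1}i}|^2/u_{1\ov{1}}^2$ for $i>1$, since $u_{i\ov{1}1}=u_{1\ov{1}i}$.
\item The one remaining term, $F^{1\ov{1}}|u_{1\ov{1}1}|^2/u_{1\ov{1}}^2$, becomes $A^2|\rho_1|^2F^{1\ov{1}}$ via $\hat{G}_1=0$. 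This is harmless because $F^{1\ov{1}}=\mu'\MA/u_{1\ov{1}}\le CM$.
\end{itemize}

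If you prefer to keep the trace, the same mechanism can be made to work. Use a weighted Cauchy--Schwarz, with weights $\lambda_k$ for $k\neq l$ and $\lambda_l/(1-\frac np)$ for $k=l$. It shows that the deficit in direction $l$ is at most $\frac{n}{p-n}\frac{\lambda_l}{S}F^{l\ov{l}}\frac{|S_l|^2}{S^2}$. By $\lambda_lF^{l\ov{l}}=\mu'\MA$ and the first-order condition, this equals $\frac{n}{p-n}\frac{\mu'\MA}{S}B^2|\rho_l|^2$, which is bounded by Corollary~\ref{MA estimate} once $S\ge 1$. Some argument of this kind, exploiting that $\lambda_lF^{l\ov{l}}$ is bounded together with the first-order condition, is what your proposal is missing.
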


\begin{proof}
For notational convenience, we denote the constant $C_{M}$ by $C$. By $\ddbar u>0$ and Corollary \ref{MA estimate}, it will suffice to show $\ddbar u\leq C\omega$ in $\bar{Q}_{T}$. For $(z,t)\in\bar{Q}_{T}$ and a unitary vector $\xi\in T_{z}^{1,0}\Omega$, we consider the quantity
\[
G(z,t,\xi) = \log u_{\xi\ov{\xi}}+A\rho,
\]
where $A$ is a constant to be determined later. Let $(z_{0},t_{0},\xi_{0})$ be the maximum point of $G$. By Lemma \ref{boundary Hessian estimate}, we assume without loss of generality that $p_{0}=(z_{0},t_{0})\notin\de^{*}Q_{T}$. Then $z_{0}$ is an interior point of $\Omega_{t_{0}}$. Choose a holomorphic normal coordinate system $(U,\{z_{i}\}_{i=1}^{n})$ centered at $z_{0}$ such that
\[
g_{i\ov{j}} = \delta_{ij}, \ \ \
\de_{k}g_{i\ov{j}} = 0, \ \ \
u_{i\ov{j}} = u_{i\ov{i}}\delta_{ij}, \ \ \
\de_{1} = \xi_{0} \ \ \
\text{at $z_{0}$}.
\]
In $U$, the quantity
\[
\hat{G}(z,t) = \log\frac{u_{1\ov{1}}}{g_{1\ov{1}}}+A\rho
\]
is well-defined near $p_{0}$ and achieves its maximum at $p_{0}$. By the maximum principle, we have $L\hat{G}\leq0$ at $p_{0}$, where $L$ is the operator defined by \eqref{definition of L}.

To prove the proposition, it will then suffice to show $u_{1\ov{1}}\leq C$ at $p_{0}$. Without loss of generality, we assume that $u_{1\ov{1}}\geq1$. We compute
\begin{equation}\label{complex Hessian estimate eqn 1}
0 \geq L\hat{G}
= \frac{L(u_{1\ov{1}})}{u_{1\ov{1}}}-\frac{F^{i\ov{i}}|u_{1\ov{1}i}|^{2}}{u_{1\ov{1}}^{2}}+Au^{i\ov{i}}\rho_{i\ov{i}}.
\end{equation}
Applying $\nabla_{\ov{1}}\nabla_{1}$ on both sides of the equation gives
\[
\nabla_{1}\left(\mu'\cdot\MA\cdot u^{i\ov{j}}u_{i\ov{j}1}\right)-u_{t1\ov{1}} = \nabla_{1}\nabla_{\ov{1}}\big(f(z,t,u)\big)
\]
and so
\begin{equation}\label{complex Hessian estimate eqn 2}
\nabla_{1}\left(\mu'\cdot\MA\cdot u^{i\ov{j}}\right)u_{i\ov{j}1}+F^{i\ov{i}}u_{i\ov{i}1\ov{1}}-u_{t1\ov{1}} = \nabla_{1}\nabla_{\ov{1}}\big(f(z,t,u)\big).
\end{equation}
Since $(\Omega,\omega)$ is K\"ahler, then the commutation formula shows
\[
\left|u_{i\ov{i}1\ov{1}}-u_{1\ov{1}i\ov{i}}\right| = \left|\sum_{p}\left(R_{1\ov{1}i\ov{p}}u_{p\ov{i}}-R_{i\ov{i}1\ov{p}}u_{p\ov{1}}\right)\right|
= \left|R_{1\ov{1}i\ov{i}}(u_{i\ov{i}}-u_{1\ov{1}})\right| \leq Cu_{1\ov{1}}.
\]
Combining this with \eqref{complex Hessian estimate eqn 2} and recalling $u_{1\ov{1}}\geq1$, we see that
\[
L(u_{1\ov{1}}) \geq \mu'\cdot\MA \cdot u^{i\ov{i}}u^{j\ov{j}}|u_{i\ov{j}1}|^{2}-(\mu''\cdot\MA^{2}+\mu'\cdot\MA)\cdot|u^{i\ov{i}}u_{i\ov{i}1}|^{2}-Cu_{1\ov{1}}.
\]
Since $\mu$ satisfies $t^{2}\mu''(t)+t\mu'(t) \leq \frac{t}{p}\mu'(t)$, then by $p>n$ and the Cauchy-Schwarz inequality, we see that
\[
\begin{split}
(\mu''\cdot\MA^{2}+\mu'\cdot\MA) \cdot |u^{i\ov{i}}u_{i\ov{i}1}|^{2}
\leq \mu'\cdot \MA \cdot \frac{1}{n}\cdot|u^{i\ov{i}}u_{i\ov{i}1}|^{2}
\leq \mu'\cdot \MA \cdot u^{i\ov{i}}u^{i\ov{i}}|u_{i\ov{i}1}|^{2}.
\end{split}
\]
This shows that
\begin{equation}\label{complex Hessian estimate eqn 3}
\begin{split}
L(u_{1\ov{1}}) \geq {} & \mu'\cdot\MA\cdot \sum_{i\neq j}u^{i\ov{i}}u^{j\ov{j}}|u_{i\ov{j}1}|^{2}-Cu_{1\ov{1}} \\
= {} & \sum_{i\neq j}F^{i\ov{i}}u^{j\ov{j}}|u_{i\ov{j}1}|^{2}-Cu_{1\ov{1}}.
\end{split}
\end{equation}
Substituting this into \eqref{complex Hessian estimate eqn 1}, and using $\ddbar\rho\geq\ve_{0}\omega$, we obtain
\[
\begin{split}
0 \geq {} & \frac{\sum_{i\neq j}F^{i\ov{i}}u^{j\ov{j}}|u_{i\ov{j}1}|^{2}}{u_{1\ov{1}}}
-\frac{F^{i\ov{i}}|u_{1\ov{1}i}|^{2}}{u_{1\ov{1}}^{2}}
+AF^{i\ov{i}}\rho_{i\ov{i}}-C\sum_{i}F^{i\ov{i}}-C \\
\geq {} & \frac{\sum_{i>1}F^{i\ov{i}}u^{1\ov{1}}|u_{i\ov{1}1}|^{2}}{u_{1\ov{1}}}
-\frac{F^{i\ov{i}}|u_{1\ov{1}i}|^{2}}{u_{1\ov{1}}^{2}}
+(A\ve_{0}-C)\sum_{i}F^{i\ov{i}}-C \\
= {} & -\frac{F^{1\ov{1}}|u_{1\ov{1}1}|^{2}}{u_{1\ov{1}}^{2}}
+(A\ve_{0}-C)\sum_{i}F^{i\ov{i}}-C.
\end{split}
\]
By $\hat{G}_{1}=0$, we see that
\[
\frac{|u_{1\ov{1}1}|^{2}}{u_{1\ov{1}}^{2}} = A^{2}|\rho_{1}|^{2} \leq CA^{2}
\]
and so
\[
(A\ve_{0}-C)\sum_{i}F^{i\ov{i}} \leq CA^{2}F^{1\ov{1}}+C.
\]
Choosing $A$ sufficiently large such that $A\ve_{0}-C\geq1$, and recalling $F^{i\ov{i}}=\mu'\cdot\MA \cdot u^{i\ov{i}}$,
\[
\sum_{i}u^{i\ov{i}} \leq Cu^{1\ov{1}}+\frac{C}{\mu'\cdot\MA}.
\]
By Corollary \ref{MA estimate} and $u_{1\ov{1}}\geq1$, we obtain $\sum_{i}u^{i\ov{i}}\leq C$ and so $u_{1\ov{1}}\leq C$, as required.
\end{proof}

We can now use an argument entirely similar to Proposition \ref{complex Hessian estimate} to conclude a bound on the full Hessian:

\begin{proposition}\label{Hessian estimate}
Let $u$ be a smooth solution of \eqref{parabolic Dirichlet problem Q T}. Then there exists a constant $C_{M}$ depending only on an upper bound of $\|u\|_{C^{0}(\ov{Q}_T)}$, $u_{0}$, $f$, $\mu$ and $(\Omega,\omega)$ such that
\[
\sup_{\bar{Q}_{T}}|\nabla^{2} u| \leq C_{M}.
\]
\end{proposition}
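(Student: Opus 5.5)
We run a maximum principle argument for the largest real second derivative, mirroring Proposition \ref{complex Hessian estimate}. By Lemma \ref{boundary Hessian estimate} the Hessian is already bounded on $\de^{*}Q_{T}$, and by Proposition \ref{complex Hessian estimate} we have $C_M^{-1}\omega\leq \ddbar u\leq C_M\omega$ on $\bar Q_T$. So the operator $L$ of \eqref{definition of L} is uniformly parabolic with constants depending only on $C_M$, since Corollary \ref{MA estimate} bounds $\mu'\cdot\MA$ above and below. For $(z,t)\in\bar Q_T$ and a unit real tangent vector $V\in T_z\Omega$, consider
\[
G(z,t,V)=\nabla^2u(V,V)+A\rho+B|\de u|^2 ,
\]
with $A,B$ large constants to be chosen. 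We may assume the maximum $(z_0,t_0,V_0)$ is attained at an interior point. Extend $V_0$ to a local vector field, for instance by parallel transport along radial geodesics of normal coordinates, so that $\hat G=u_{VV}+A\rho+B|\de u|^2$ attains a local maximum at $p_0$. Then $L\hat G\leq 0$ there.

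The key step is differentiating the equation twice in the real direction $V$. This gives
\[
F^{i\ov j}u_{i\ov jVV}-u_{tVV}=\nabla_V\nabla_V f-\mu'\cdot\MA\cdot\big(\text{concave term in }u_{i\ov jV}\big)+\text{curvature terms}.
\]
Here the concave term equals
\[
-\mu'\MA\, u^{i\ov q}u^{p\ov j}u_{i\ov jV}u_{p\ov qV}+(\mu''\MA^2+\mu'\MA)\,|u^{i\ov j}u_{i\ov jV}|^2 .
\]
As in \eqref{complex Hessian estimate eqn 3}, the structural condition $t^2\mu''+t\mu'\leq \frac{t}{p}\mu'$ with $p>n$ and Cauchy--Schwarz make it nonpositive. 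So the two-sided bound has the right sign: $L(u_{VV})\geq -C(1+|\nabla^2u|)$. Commuting $u_{i\ov jVV}$ with $u_{VVi\ov j}$ costs curvature times $|\nabla^2 u|$, plus terms involving third derivatives. Because $\omega$ is Kähler, the third derivatives appear only linearly through $\nabla u\ast\nabla^3u$-type terms that can be absorbed.

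For the other pieces of $\hat G$:
\begin{itemize}
\item By \eqref{gradient computation}, $L(B|\de u|^2)\geq B\sum_{k}F^{i\ov i}(|u_{ik}|^2+|u_{i\ov k}|^2)-CB\sum F^{i\ov i}-CB$.
\item Since the complex Hessian is bounded, $F^{i\ov i}$ is comparable to the identity. So the good term $B\sum_k F^{i\ov i}|u_{ik}|^2$ controls $c_MB|\nabla^2u|^2$, which dominates the linear errors $C|\nabla^2u|$ and the term from extending $V$.
\item $AL\rho\geq A\ve_0\sum F^{i\ov i}$ absorbs the constants.
\end{itemize}
Together these give $0\geq c_MB|\nabla^2u|^2-C_M(1+|\nabla^2u|)$ at $p_0$. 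Hence $u_{V_0V_0}\leq C_M$ there, so $\nabla^2u(V,V)\leq C_M$ everywhere. Since $\Delta u$ is bounded by Proposition \ref{complex Hessian estimate}, the lower eigenvalues are bounded as well, which gives $|\nabla^2u|\leq C_M$.

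The main obstacle is the bookkeeping in the second real derivative of the equation: verifying that the fully nonlinear term has a sign despite $\mu$ not being exactly $\log$, and that the non-$(1,1)$ third derivatives $u_{ijV}$ arising from commutators are controlled by the $B|\de u|^2$ term. If the sign of the concave term proves delicate, the fallback is that the complex Hessian bound makes the equation uniformly parabolic and concave in $\ddbar u$, by the condition on $\mu$. Then parabolic Evans--Krylov, applied in the interior and near the boundary using Lemma \ref{boundary Hessian estimate}, gives a $C^{2+\alpha}$ bound and in particular the real Hessian bound.
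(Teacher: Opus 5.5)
Your argument is correct and is essentially the paper's proof. The paper maximizes $\nabla^2u(V,V)+|\de u|^2$, with $V$ extended with constant coefficients in normal coordinates. It uses $t^2\mu''+t\mu'\le \frac{t}{p}\mu'$ to get $L(u_{VV})\ge -C|\nabla^2u|\sum_i F^{i\ov{i}}-C|\nabla^2u|$, and lets the good term $\sum_k F^{i\ov{i}}|u_{ik}|^2$ from \eqref{gradient computation} dominate; your extra $A\rho$ and weight $B$ are harmless but unnecessary. One correction to your bookkeeping: on a K\"ahler manifold, commuting $\nabla_V\nabla_V$ past $\nabla_i\nabla_{\ov{j}}$ produces only $R\ast\nabla^2u+\nabla R\ast\nabla u$, so no third derivatives (in particular no $u_{ijV}$) need to be controlled, and the $|\de u|^2$ term controls $|\nabla^2u|^2$ against the linear errors rather than any third-order terms.
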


\begin{proof}
For notational convenience, we again denote $C_{M}$ by $C$. Since $\Delta u>0$, it suffices to establish an upper bound of $\nabla^{2}u$. For $(z,t)\in\bar{Q}_{T}$ and a unit vector $v\in T_{z}\Omega$, we consider the quantity
\[
G(z,t,v) = \nabla^{2}u(v,v)+|\de u|^{2}.
\]
Let $(z_{0},t_{0},v_{0})$ be the maximum point of $G$. By Lemma \ref{boundary Hessian estimate}, we can assume without loss of generality that $p_{0}=(z_{0},t_{0})\notin\de^{*}Q_{T}$, so that $z_{0}$ is an interior point of $\Omega_{t_{0}}$. Choose a holomorphic normal coordinate system $(U,\{z_{i}\}_{i=1}^{n})$ centered at $z_{0}$ such that
\[
g_{i\ov{j}} = \delta_{ij}, \ \ \
\de_{k}g_{i\ov{j}} = 0, \ \ \
u_{i\ov{j}} = u_{i\ov{i}}\delta_{ij} \ \ \
\text{at $z_{0}$}.
\]
In $U$, extend the vector $v_{0}$ to be a vector field $V$ by taking the components to be constant. Then the quantity
\[
\hat{G}(z,t) = \frac{\nabla^{2}u(V,V)}{|V|^{2}}+|\de u|^{2}
\]
is well-defined near $p_{0}$ and achieves its maximum at $p_{0}$. By the maximum principle, we have $L\hat{G}\leq0$ at $p_{0}$, where $L$ is the operator defined by \eqref{definition of L}. To prove Proposition \ref{Hessian estimate}, it suffices to show $|\nabla^{2}u|\leq C$ at $p_{0}$. We compute
\begin{equation}\label{Hessian estimate eqn 1}
0 \geq L\hat{G} = L(u_{VV})+L(|\de u|^{2}).
\end{equation}
For the first term of \eqref{Hessian estimate eqn 1}, applying $\nabla_{V}\nabla_{V}$ to the equation in \eqref{parabolic Dirichlet problem Q T} and using the similar argument of \eqref{complex Hessian estimate eqn 3},
\begin{equation}\label{Hessian estimate eqn 2}
\begin{split}
L(u_{VV}) \geq {} & \sum_{i\neq j}F^{i\ov{i}}u^{j\ov{j}}|u_{i\ov{j}V}|^{2}-C|\nabla^{2}u|\sum_{i}F^{i\ov{i}}-C|\nabla^{2}u| \\
\geq {} & -C|\nabla^{2}u|\sum_{i}F^{i\ov{i}}-C|\nabla^{2}u|.
\end{split}
\end{equation}
For the second term of \eqref{Hessian estimate eqn 1}, by the same computation as in Proposition \ref{gradient estimate} (see \eqref{gradient computation}),
\begin{equation}\label{Hessian estimate eqn 3}
L(|\de u|^{2}) \geq \sum_{k}F^{i\ov{i}}|u_{ik}|^{2}-C\sum_{i}F^{i\ov{i}}-C.
\end{equation}
Substituting \eqref{Hessian estimate eqn 2} and \eqref{Hessian estimate eqn 3} into \eqref{Hessian estimate eqn 1},
\[
0 \geq \sum_{k}F^{i\ov{i}}|u_{ik}|^{2}-C|\nabla^{2}u|\sum_{i}F^{i\ov{i}}-C|\nabla^{2}u|-C\sum_{i}F^{i\ov{i}}-C.
\]
From Corollary \ref{MA estimate} and Proposition \ref{complex Hessian estimate}, we have $C^{-1}\leq F^{i\ov{i}}\leq C$ and so
\[
0 \geq \sum_{k}|u_{ik}|^{2}-C|\nabla^{2}u|-C.
\]
Combining this with
\[
|\nabla^{2}u|^{2} = \sum_{i,k}(|u_{ik}|^{2}+|u_{i\ov{k}}|^{2})
\leq \sum_{i,k}|u_{ik}|^{2}+C,
\]
we obtain
\[
0 \geq |\nabla^{2}u|^{2}-C|\nabla^{2}u|-C.
\]
By the Cauchy-Schwarz inequality, we obtain $|\nabla^{2}u|\leq C$, as required.
\end{proof}

\subsection{Higher order estimates and existence}

We conclude by using standard regularity theory to establish existence of solutions to the $\mu$-flow:

\begin{proof}[Proof of Theorem \ref{thm:estimate of flow}]
By standard parabolic theory, it suffices to establish a priori estimates on the solution. Propositions \ref{uniform estimate}, \ref{u t estimate}, \ref{gradient estimate}, \ref{complex Hessian estimate} and \ref{Hessian estimate} imply the uniform $C^{2,1}$ estimate on the solution, and also that \eqref{parabolic Dirichlet problem Q T} is uniformly parabolic. It follows from Lemma \ref{concavity of F} that \eqref{parabolic Dirichlet problem Q T} is concave. Thus, the higher order estimates can be deduced using Evans--Krylov theory and a standard bootstrapping argument.
\end{proof}

\subsection{Stability}

In this section, we suppose that $u,v$ are smooth solutions to \eqref{parabolic Dirichlet problem Q T}, with initial conditions $u_0$ and $v_0$, respectively. Following a standard parabolic argument (for example, see \cite[Proposition 1.5]{To17}), we show:

\begin{lemma}\label{stability lemma}
Set $M_{0} := \max\left\{\|u\|_{C^{0}(Q_{T})},\|v\|_{C^{0}(Q_{T})}\right\}$ and
\[
K := \max_{s\in[-M_{0},M_{0}]}\left|\frac{\de f}{\de s}(z,t,s)\right|.
\]
Then we have
\[
\|u-v\|_{C^{0}(Q_{T})} \leq e^{KT}\|u_{0}-v_{0}\|_{C^{0}(\Omega)}.
\]
\end{lemma}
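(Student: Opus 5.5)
We apply the maximum principle to the exponentially damped difference. Set $w := e^{-(K+\varepsilon)t}(u-v)$ for a small $\varepsilon>0$, which we will send to $0$ at the end. By symmetry it suffices to prove $u - v \leq e^{KT}\|u_0-v_0\|_{C^0(\Omega)}$, and for this it is enough to show
\[
\sup_{\ov{Q}_T} w \leq \max\Big(0, \sup_{\Omega} (u_0-v_0)\Big).
\]
Multiplying back by $e^{(K+\varepsilon)t}\leq e^{(K+\varepsilon)T}$ and letting $\varepsilon\to 0$ then gives the claim. The same argument with $u$ and $v$ interchanged gives the lower bound.

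To prove the displayed bound, let $p_0=(z_0,t_0)$ be a maximum point of $w$ on $\ov{Q}_T$, and suppose $w(p_0)>0$. If $p_0\in\ov{\Gamma}_T$, then $u=v=0$ there, so $w(p_0)=0$, a contradiction. If $t_0=0$, then $w(p_0)\leq \sup_\Omega(u_0-v_0)$ and we are done. Otherwise $z_0$ is an interior point of $\Omega_{t_0}$ and $t_0>0$. At $p_0$ we then have $\ddbar(u-v)\leq 0$ (as a Hermitian form) and $w_t\geq 0$, where the latter holds because either $t_0<T$ is an interior maximum in time or $t_0 = T$ is approached from below.

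The key step is to exploit monotonicity of $F$. Since $\mu'>0$ and $\ddbar u\leq \ddbar v$ at $z_0$ with both forms positive definite, we get $\MA(u)\leq \MA(v)$, hence $F(u)\leq F(v)$ at $p_0$. Next,
\[
w_t = e^{-(K+\varepsilon)t}\big(u_t - v_t\big) - (K+\varepsilon)w ,
\]
and by the flow equation
\[
u_t - v_t = F(u)-F(v) - \big(f(z,t,u)-f(z,t,v)\big).
\]
The first difference is $\le 0$. By the mean value theorem the second difference is bounded in absolute value by $K|u-v|$, since $u,v\in[-M_0,M_0]$ and $u>v$ at $p_0$. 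Hence
\[
0\leq w_t \leq e^{-(K+\varepsilon)t}K(u-v) - (K+\varepsilon) w = -\varepsilon w(p_0) <0,
\]
a contradiction. Therefore either $w\leq 0$ everywhere, or the maximum of $w$ occurs at $t=0$, which establishes the displayed bound.

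The only delicate points are the following. First, $M_0$ must be finite so that $K$ is finite; this holds because $u,v$ are smooth on $\ov{Q}_T$, and Proposition \ref{uniform estimate} also gives it. Second, the maximum principle step needs $\MA$, and hence $F$, to be monotone with respect to the Hermitian order on positive-definite forms; this follows from the monotonicity of the determinant together with $\mu'>0$. The introduction of $\varepsilon$ is what turns the weak inequality into a strict contradiction, and I expect no real obstacle beyond this bookkeeping.
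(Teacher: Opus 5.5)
Your proposal is correct and follows essentially the same maximum-principle argument as the paper: both damp the difference $u-v$ by an exponential in $t$, use $\ddbar u\leq\ddbar v$ to get $F(u)\leq F(v)$ at an interior maximum, and bound $f(z,t,u)-f(z,t,v)$ by $K(u-v)$. The one difference is your $\varepsilon$. The paper deduces $u\leq v$ from $f(z,t,u)+Ku\leq f(z,t,v)+Kv$ by saying $s\mapsto f(z,t,s)+Ks$ is increasing, but under $|f_s|\leq K$ it is only non-decreasing, so replacing $K$ by $K+\varepsilon$ and then letting $\varepsilon\to 0$ is exactly what makes that step rigorous.
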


\begin{proof}
Define
\[
w = e^{-Kt}(u-v)
\]
and let $(z_{0},t_{0})$ be the maximum point of $w$. It suffices to show
\begin{equation}\label{stability eqn 1}
w(z_{0},t_{0}) \leq \|u_{0}-v_{0}\|_{C^{0}(\Omega)}.
\end{equation}
Indeed, $w(z_{0},t_{0})\leq 0$ implies $w\leq0$ and so
\[
u-v \leq e^{KT}\|u_{0}-v_{0}\|_{C^{0}(\Omega)}.
\]
Interchanging $u$ and $v$, would then produce the required estimate.

\medskip

We show \eqref{stability eqn 1}. If $(z_{0},t_{0})\in\de^{*}Q_{T}$, then either
\[
w(z_{0},t_{0}) =
\begin{cases}
\ u_{0}(z_{0})-v_{0}(z_{0}) \ & \mbox{if $(z_{0},t_{0})\in\Omega_{0} $}, \text{ or } \\
\ 0 \ & \mbox{if $(z_{0},t_{0})\in\bar{\Gamma}_{T}$}.
\end{cases}
\]
In both cases, \eqref{stability eqn 1} follows. If $(z_{0},t_{0})\notin\de^{*}Q_{T}$, then at $(z_{0},t_{0})$, we have
\[
\frac{\de w}{\de t} \geq 0, \ \
\ddbar w \leq 0.
\]
Thus,
\[
-K e^{-Kt}(u-v)+e^{-Kt}(u_{t}-v_{t}) \geq 0, \ \
e^{K t}(\ddbar u-\ddbar v) \leq 0
\]
and so
\[
(u_{t}-v_{t}) \geq K(u-v), \ \
\ddbar u \leq \ddbar v.
\]
From $\ddbar u \leq \ddbar v$, we have $F(u)\leq F(v)$. We then compute
\[
\begin{split}
K(u-v) \leq {} & (u_{t}-v_{t}) = F(u)-F(v)-f(z,t,u)+f(z,t,v) \leq -f(z,t,u)+f(z,t,v).
\end{split}
\]
It follows that
\[
f(z,t,u)+Ku \leq f(z,t,v)+K v \ \ \text{at $(z_{0},t_{0})$}.
\]
Now since $|f_{s}|\leq K$ in $[-M_{0},M_{0}]$, the function $f(z,t,s)+Ks$ is increasing in $s\in[-M_{0},M_{0}]$. The above now implies that $u\leq v$ at $(z_{0},t_{0})$ and so $w(z_{0},t_{0})\leq0$, as required.
\end{proof}

\section{The sublinear case}\label{sublinear section}

In this section, we will prove Theorem \ref{Sublinear Theorem}; for the reader's convenience, we recall the statement below:

\begin{theorem}[Sublinear Growth]\label{Sublinear Section 4}
	 Let $\psi\in C(\bar{\Omega\times\RR_-})\cap C^{1,1}(\bar\Omega\times\RR_-)$. Suppose that $\psi(z,x)>0$ for $x<0$, and that
	 \begin{align}
		&\lim_{x\>0^-}\frac{\psi(z,x)}{|x|} > \lambda_1 \quad \text{uniformly for } z\in \ov{\Omega}, \text{ and } \label{eq:sublinear growth cond go to 0-}\\
	 	&\lim_{x\rightarrow-\infty}\frac{\psi(z,x)}{|x|} <\lambda_1\quad \text{uniformly for } z\in\ov{\Omega}. \label{eq:sublinear condition}
	 \end{align}
 Then there exists a non-zero solution $u\in  C^{1,1}(\bar\Omega)\cap C^\infty(\Omega) \cap \cH(\Omega)$ to \eqref{eq:initial equation} which minimizes $J$ over $\mathcal{H}(\Omega)$.
\end{theorem}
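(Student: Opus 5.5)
We plan to run the negative gradient flow of $J$, namely the $\mu$-flow of Section~\ref{mu flow section} with $p=\infty$, so that $\mu=\log$. Concretely, we first regularize $\psi$: for small $\e>0$ we choose smooth $\psi_\e\ge \e$, depending only on $(z,x)$, with $\psi_\e\to\psi$ in $C^{1,1}_{loc}$ on $\ov\Omega\times\RR_-$, and extend $\psi_\e$ suitably to $x>0$. We arrange that \eqref{eq:sublinear growth cond go to 0-} and \eqref{eq:sublinear condition} hold for $\psi_\e$ uniformly in $\e$, and that $\psi_\e(z,x)\le (\lambda_1-\delta)|x|+C$. Then we consider
\[
\log\MA(u)-u_t=n\log\psi_\e(z,u).
\]
The right-hand side satisfies $|f|\le K_1+K_2|u|$ only after a truncation at large $|u|$; this truncation is harmless once we have a uniform $C^0$ bound. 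For the initial datum we take $u_0=s\rho$, with $s$ chosen so that compatibility \eqref{compatibility condition} holds, or after a further small modification of $f$ near $t=0$. Theorem~\ref{thm:estimate of flow} then gives a smooth solution on each $[0,T]$. Differentiating along the flow, we get
\[
\frac{d}{dt}J_\e(u)=-\int_\Omega u_t\bigl(\MA(u)-\psi_\e^n(z,u)\bigr)\omega^n\le 0,
\]
since $u_t$ and $\MA(u)-\psi_\e^n$ have the same sign by monotonicity of $\log$. So $J_\e$ decreases along the flow.

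\textbf{Key step: uniform $C^0$ bound.} By \eqref{eq:sublinear condition}, $\Psi_\e(z,x)\le \frac{(\lambda_1-\delta)^n}{n+1}|x|^{n+1}+C$. Together with the Rayleigh quotient \eqref{eq:Rayleigh Quotient} this gives
\[
J_\e(u)\ge E(u)-\Bigl(\tfrac{\lambda_1-\delta}{\lambda_1}\Bigr)^nE(u)-C=c\,E(u)-C .
\]
Hence $E(u(t))\le C(1+J_\e(u_0))$, uniformly in $t$ and $\e$. To upgrade this energy bound to a $C^0$ bound, we use Corollary~\ref{cor:hessianSob}: since $\psi_\e^n$ has at most polynomial growth, $\|\psi_\e^n(z,u)\|_{L^q}\le C$ for every $q$. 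On the other hand, $\MA(u)=e^{u_t}\psi_\e^n$, and along the flow we control $\MA(u)$ in $L^q$ only if $u_t$ is bounded. This is the main obstacle: Proposition~\ref{u t estimate} bounds $u_t$ by $CM$, which is circular. We would break the circularity by a Kołodziej-type argument, using $\sup u_t$ at the time-slice. Alternatively, following the paper's hint in its Remark~4.4, we would first use $L^q$ bounds on $\MA(u)$ for critical points to get Hölder, and hence uniform, bounds. Failing that, we would avoid the flow for existence and minimize directly. Concretely, take a minimizing sequence in $\mathcal E^1$, bounded in energy by properness. Compactness together with upper semicontinuity of $-E$ and continuity of $\int\Psi$ (via the Moser--Trudinger inequality, Proposition~\ref{prop:MT_inequ}) gives a minimizer $u\in\mathcal E^1$. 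The variational characterization then shows that $u$ solves \eqref{eq:initial equation} weakly. Since the right-hand side lies in $L^q$ with $q>1$, Kołodziej gives $u\in L^\infty$, and then Theorem~\ref{thm:C3alpha-estimate} (with $\psi_\e\ge\e$) gives smoothness. The flow is then used only to confirm that the minimizer is a limit of smooth flows, with $\lim_{t\to\infty}u(t)$ a stationary smooth solution minimizing $J_\e$.

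\textbf{Nontriviality and passage to the limit.} Condition \eqref{eq:sublinear growth cond go to 0-} gives $\psi^n\ge(\lambda_1+\delta)^n|x|^n$ for $|x|$ small. Hence
\[
J(su_1)\le s^{n+1}\bigl(\lambda_1^n-(\lambda_1+\delta)^n\bigr)I(u_1)<0=J(0)
\]
for small $s>0$. Therefore $\inf J_\e\le -c<0$ uniformly in $\e$, and the minimizers $u_\e$ are nonzero with $J(u_\e)\le -c$. By the uniform $C^0$ bound and Theorem~\ref{thm:C3alpha-estimate}, whose $C^2$ constant is independent of $\psi_0=\e$, the family $u_\e$ is bounded in $C^{1,1}(\ov\Omega)$. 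A subsequence converges in $C^{1,\alpha}$ to $u$, which solves \eqref{eq:initial equation} in the pluripotential sense and has $J(u)\le -c$, so $u\ne0$. Interior smoothness follows because $u<0$ in $\Omega$: by the Hopf-type comparison $u\le s\rho$, the right-hand side is positive on compact subsets, so Evans--Krylov applies locally. Finally, $u$ minimizes $J$ over $\mathcal H(\Omega)$. Indeed, for any $v\in\mathcal H$ we have $J_\e(u_\e)\le J_\e(v)$, and we pass to the limit using uniform convergence of $\Psi_\e\to\Psi$ on bounded sets.
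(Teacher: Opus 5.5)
\textbf{Gap: existence of a classical minimizer for the nondegenerate problem.} Your outer layer matches the paper: approximate by a nondegenerate right-hand side, get properness of $J_\e$ from the Rayleigh quotient, obtain $\e$-independent $L^\infty$ and $C^2$ bounds (the $C^2$ constant in Theorem~\ref{thm:C3alpha-estimate} does not depend on $\psi_0$), rule out a zero limit via $J(su_1)<0$, and pass $J_\e(u_\e)\le J_\e(v)$ to the limit. The gap is the inner step everything rests on. For a nondegenerate $\psi_\e\ge\e$ you must produce a \emph{classical} minimizer $u_\e\in\cH(\Omega)$ of $J_\e$; this is the paper's Theorem~\ref{thm:existence_sublinear_spositive}.

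Your flow version does not supply it. It leaves the time-uniform $C^0$ bound open; with $K_2>0$, Proposition~\ref{uniform estimate} only gives $e^{K_2t}$ growth. Even with that bound, starting at $u_0=s\rho$ would only give a stationary $w$ with $J_\e(w)\le J_\e(s\rho)$, not a minimizer.

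The missing idea is to truncate the nondegenerate right-hand side to be \emph{bounded}. Take $\psi_m\nearrow\psi_\e$ with $\psi_m=\psi_\e$ for $|x|\le m$ and $\psi_m$ independent of $x$ for $|x|\ge 2m$. Then $f=\log\psi_m^n$ is bounded, so $K_2=0$. The barrier $u_0+A\rho$ in Proposition~\ref{uniform estimate}, together with Theorem~\ref{thm:estimate of flow}, then gives $T$-independent smooth estimates; they depend on $m$, which is harmless.

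Start the flow from a compatible near-minimizer, $J_m(u_0)\le\inf_{\cH}J_m+4a$. Monotonicity of $J_m$ gives times with $\frac{d}{dt}J_m\to0$, hence a smooth stationary limit $w_{m,a}$ with $\MA(w_{m,a})=\psi_m^n(z,w_{m,a})$ and $J_m(w_{m,a})\le\inf_{\cH}J_m+4a$. Only now apply properness and Ko\l odziej. On a stationary solution there is no factor $e^{u_t}$, and since $J_\e\le J_m$ the energy bound is independent of $m$ and $a$. For $m$ beyond this $L^\infty$ bound the truncation is invisible, and letting $a\to0$ with $C^{3,\alpha}$ bounds gives the smooth minimizer. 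Your circularity comes from asking for a truncation-independent bound along the flow rather than at the stationary limit.

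\textbf{The fallback does not close the gap either.} Deriving the Euler--Lagrange equation for a minimizer in $\cE^1$ needs differentiability of the energy composed with the psh envelope. You assert this without proof, and it would have to be established on strongly pseudoconvex manifolds.

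More seriously, the step ``Ko\l odziej gives $u\in L^\infty$, then Theorem~\ref{thm:C3alpha-estimate} gives smoothness'' is not valid. Theorem~\ref{thm:C3alpha-estimate} is an \emph{a priori} estimate for solutions already in $\cH(\Omega)$, not a regularity theorem for weak solutions. Because $\psi$ is decreasing in $u$, there is no comparison principle or uniqueness that identifies your weak minimizer with a classical solution. Freezing the density as $\psi_\e^n(z,u(z))$ gives a density only as regular as $u$ (continuous, at best H\"older). That is far from the $C^{1,1}$ control of the density that Theorem~\ref{thm:C3alpha-estimate} and the Caffarelli--Kohn--Nirenberg--Spruck theory need. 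Regularity of bounded weak solutions to such non-monotone equations is genuinely delicate, and this is exactly why the paper never leaves the class of smooth flow solutions.

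A minor point: $u<0$ in $\Omega$ follows from the strong maximum principle since $u\not\equiv0$. The comparison $u\le s\rho$ you invoke is unjustified when $\psi(z,0)$ may vanish.
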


We start by showing that the sublinear growth condition \eqref{eq:sublinear condition} implies that $J$ is proper with respect to $E$.
\begin{proposition}\label{sublinear_proper}
There exists $\theta, K > 0$ such that
\[
J(u) \geq \theta E(u) - K
\]
for all $u\in\mathcal{H}(\Omega)$.
\end{proposition}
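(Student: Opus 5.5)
The plan is to bound $\int_\Omega \Psi(z,u)\omega^n$ from above by a multiple of $I(u)$ that is strictly smaller than $\lambda_1^n I(u)$, plus a constant, and then use the Rayleigh quotient formula \eqref{eq:Rayleigh Quotient} to absorb it into $E(u)$.

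First, by \eqref{eq:sublinear condition} there are $\delta>0$ and $R>0$ with $\psi(z,x)\leq (\lambda_1-\delta)|x|$ for all $x\leq -R$ and $z\in\ov\Omega$. Since $\psi$ is continuous on the compact set $\ov\Omega\times[-R,0]$, it is bounded there, say $\psi\leq C_0$. Hence for $x\leq 0$,
\[
\Psi(z,x)=\int_x^0\psi^n(z,s)\,ds\leq C_0^nR+\int_x^0(\lambda_1-\delta)^n|s|^n\,ds = C_0^nR+\frac{(\lambda_1-\delta)^n}{n+1}|x|^{n+1}.
\]
The same bound holds trivially when $x\geq -R$. Integrating over $\Omega$ gives
\[
\int_\Omega\Psi(z,u)\omega^n\leq K+(\lambda_1-\delta)^n I(u), \qquad K:=C_0^nR\,\Vol(\Omega).
\]

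Next, \eqref{eq:Rayleigh Quotient} gives $I(u)\leq \lambda_1^{-n}E(u)$ for every $u\in\mathcal H(\Omega)$. Substituting,
\[
J(u)\geq E(u)-\Big(\frac{\lambda_1-\delta}{\lambda_1}\Big)^nE(u)-K,
\]
so the statement holds with $\theta=1-(1-\delta/\lambda_1)^n>0$.

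There is no real obstacle here. The only point needing care is that the limit in \eqref{eq:sublinear condition} is uniform in $z$, which is what lets the threshold $R$ be chosen independently of $z$. If $\psi$ were only defined on $\RR_-$ we would also need to note that $u\leq 0$ by the maximum principle, so only $x\leq0$ matters. If the limit in \eqref{eq:sublinear condition} is read as a $\limsup$, the same argument applies verbatim.
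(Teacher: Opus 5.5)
Your proposal is correct and follows essentially the same route as the paper: a pointwise bound $\Psi(z,x)\leq K+\frac{(1-\theta)\lambda_1^n}{n+1}|x|^{n+1}$ for $x\leq 0$, obtained from the uniform sublinear limit together with continuity of $\psi$ on a compact set, and then the Rayleigh quotient formula to absorb $(1-\theta)\lambda_1^n I(u)$ into $E(u)$. Your version simply makes the constants explicit, namely $\theta=1-(1-\delta/\lambda_1)^n$ and $K=C_0^nR\,\Vol(\Omega)$.
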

\begin{proof}
By \eqref{eq:sublinear condition}, there exists constants $K , \theta > 0$ such that:
\begin{align}\label{eq:upper of psi}
	\psi(z,x)^n\leq K + (1-\theta)\lambda_1^n|x|^{n}
\end{align}
for all $x \leq 0$. Similarly, up to possibly increasing $K$, we have:
\begin{align}\label{eq:upper of Psi}
	\Psi(z,x)\leq K +\frac{(1-\theta)\lambda_1^n}{n+1}|x|^{n+1}
\end{align}
for all $x \leq 0$. By \eqref{eq:Rayleigh Quotient}, we then see
\begin{align}
	J(u)
	\geq& E(u)-K\Vol(\Omega)-(1-\theta)\lambda_1^n\frac{1}{n+1}\int_\Omega(-u)^{n+1}\omega^n
	\geq\theta E(u)-K\Vol(\Omega),
\end{align}
as desired.
\end{proof}
In particular, $J$ is always bounded from below and $J(u)\>\infty$ as $E(u)\>\infty$. We deduce from this {\it a priori} $L^\infty$-bounds on solutions to \eqref{eq:initial equation} under the sublinear condition \eqref{eq:sublinear condition}:

\begin{lemma} \label{lemm:uniform_Linfty_sublinear}
	Consider \eqref{eq:initial equation} under the condition \eqref{eq:sublinear condition}, and let $u\in \cH(\Omega)$ be a solution of \eqref{eq:initial equation} satisfying $J(u)\leq A_0$.
	Then there exists a constant $C = C(n, K,\theta,A_0, \Omega)$ such that
	\begin{align*}
		\sup_{z\in\Omega}|u(z)|\leq C.
	\end{align*}
\end{lemma}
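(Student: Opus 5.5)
By Proposition~\ref{sublinear_proper}, $\theta E(u) - K \le J(u) \le A_0$, so $E(u) \le (A_0+K)/\theta$. The task is therefore to turn an energy bound into an $L^\infty$ bound for a solution of \eqref{eq:initial equation}. I will use the equation to control the right-hand side in some $L^q$ with $q$ large, and then apply a Kołodziej-type $L^\infty$ estimate for the Dirichlet problem.

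First, \eqref{eq:upper of psi} gives the pointwise bound
\[
\psi^n(z,u)\le K+(1-\theta)\lambda_1^n|u|^n \le C(1+|u|^n).
\]
Fix $q>1$ large. Corollary~\ref{cor:hessianSob}, applied with $p=nq$, yields
\[
\|\psi^n(\cdot,u)\|_{L^q(\Omega,\omega^n)} \le C\bigl(1+\|u\|_{L^{nq}}^n\bigr) \le C\bigl(1+E(u)^{\frac{n}{n+1}}\bigr)\le C(n,K,\theta,A_0,\Omega).
\]
Hence $(\ddbar u)^n = g\,\omega^n$ with $\|g\|_{L^q}$ uniformly bounded and $u=0$ on $\p\Omega$.

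Second, I invoke Kołodziej's uniform estimate for the Dirichlet problem on a strongly pseudoconvex manifold: if $(\ddbar u)^n = g\omega^n$, $u|_{\p\Omega}=0$ and $g\in L^q$ with $q>1$, then $\sup|u|\le C(n,q,\Omega,\omega)\|g\|_{L^q}^{1/n}$. This is the step that needs care, since the paper does not state this estimate; I expect it to be the main obstacle. If a self-contained argument is preferred, I would run the standard capacity argument. Comparison of $u$ with $s$-translates gives
\[
\mathrm{Cap}(\{u<-s-t\})\le t^{-n}\int_{\{u<-s\}} g\,\omega^n .
\]
By Hölder, the right side is at most $t^{-n}\|g\|_{L^q}\Vol(\{u<-s\})^{1-1/q}$, and volume is dominated by capacity through the Alexander--Taylor/Kołodziej volume–capacity inequality $\Vol(K)\le C\exp(-c\,\mathrm{Cap}(K)^{-1/n})$, which holds locally via finite charts. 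The De Giorgi-type iteration lemma then produces the bound.

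An alternative route avoids capacity by a direct De Giorgi iteration. It tests the equation against $(-u-s)_+$ and uses the Moser--Trudinger estimate of Proposition~\ref{prop:MT_inequ} applied to the truncations $\max(u,-s)-(-s)$ restricted to sublevel sets. This is closer to the effective Hölder/MT approach mentioned in the introduction, but it is messier to carry out.

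In either route, the constant depends only on $n$, $q$ (which is fixed), $\Omega$, and the $L^q$ bound. That $L^q$ bound in turn depends only on $K$, $\theta$, $A_0$, so the lemma follows.
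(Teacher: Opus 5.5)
Your proposal is correct and matches the paper's proof: an energy bound from Proposition~\ref{sublinear_proper}, then an $L^q$ bound on $\psi^n(z,u)$ via \eqref{eq:upper of psi} and Corollary~\ref{cor:hessianSob}, then Ko\l odziej's $L^\infty$ estimate. The paper simply cites that estimate \cite{Kol98}, with exponent $q=1+\frac{1}{n}$ instead of a large $q$ (an immaterial difference), so your capacity and De Giorgi sketches are optional.
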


\begin{proof}
	By Proposition \ref{sublinear_proper}, one has
	\begin{align*}
		E(u)\leq C,
	\end{align*}
    where $C=C(A_0, \theta, K)$. Let $p = 1+\frac{1}{n}$. By \eqref{eq:upper of psi} and \eqref{eq:Sobolev ineq}, one obtains
	\begin{align*}
		\int_\Omega\psi(z,u)^{np}\omega^n       \leq \int_\Omega(K +(1-\theta)\lambda_1|u|)^{np}    \leq C.
	\end{align*}
    By Ko\l odziej's $L^\infty$-estimate \cite{Kol98}, we are done.
\end{proof}

\begin{remark}
Actually, by using the uniform H\"older estimate for the complex Monge--Amp\`ere operator shown in \cite{HZ26}, one can remove the assumption that $J(u)\leq A_0$ from Lemma~\ref{lemm:uniform_Linfty_sublinear}. The proof is by contradiction, following roughly \cite[Lemma~6.1]{CW01}; indeed, if the conclusion of the lemma failed to hold, then there would exist a sequence of right-hand sides $\{\psi_m\}_{m=1}^\infty$, each satisfying \eqref{eq:upper of psi}, and such that the equation \eqref{eq:initial equation} with $\psi=\psi_m$ has a solution $u_m\in\cH(\Omega,\omega)$ with
	\begin{align*}
		M_m:=\sup_\Omega|u_m|\>\infty
	\end{align*}
	as $m\>\infty$.	Setting $v_m:=\frac{u_m}{M_m}$, we see that $v_m$ is uniformly bounded and solves
	\begin{align*}
		\begin{cases}
		(\ddc v_m)^n=\frac{\psi^n(z, u_m)}{M_m^n}\omega^n, &\text{ in }\Omega, \\
		v_m=0, &\text{ on }\p\Omega.
	\end{cases}
	\end{align*}
    By \eqref{eq:upper of psi}, we have
    \begin{align*}
	(\ddc v_m)^n \leq \left(\frac{K}{M_m^n}+(1-\theta)(-\lambda_1 v_m)^n\right)\omega^n.
	\end{align*}
    By \cite[Theorem~1.1]{HZ26}, up to taking a subsequence, the $v_m$ converge uniformly on $\ov{\Omega}$ to some $v\in\PSH(\Omega)$, which is necessarily non-zero. Since the complex Monge--Amp\`ere operator is weakly continuous under uniform convergence, we see
	\begin{align*}
		(\ddc v)^n 	\leq (1-\theta)\lambda_1^n (-v)^n\omega^n;
	\end{align*}
	but then Proposition \ref{supersolution prop} implies $v = 0$, a contradiction.
\end{remark}
\medskip

To show higher regularity of solutions to \eqref{eq:initial equation}, we will use a flow argument; this is especially convenient since $J$ will not have good convexity properties in general.

Following \cite{CW01}, we consider the flow:
\begin{align}
\label{eq:neg gradient flow}
	\begin{cases}
		\log\MA(u) -u_t= \log \psi^n(z, u) & \text{in }\Omega\times(0,\infty)  \\
		u=0 & \text{on }\p\Omega\times[0,\infty)  \\
		u=u_0 &\text{ on } \{t=0\},
	\end{cases}
\end{align}
where we assume $u_0\in \mathcal{H}(\Omega)$ satisfies the compatibility condition \eqref{compatibility condition}. This is a negative gradient flow for $J$, since, if $u = u(z,t)$ is a smooth solution, then
\begin{align}
	\frac{d}{dt} J(u(z,t)) =&\int_\Omega(-u_t)\left(\MA(u) -\psi^n(u)\right)\omega^n \nonumber\\
	=&-\int_\Omega\Big(\log\MA(u)-\log\psi^n(u)\Big)\Big(\MA(u)-\psi^n(u)\Big)\omega^n \leq0.\label{eq:negative derivative along flow}
\end{align}

We will first show Theorem \ref{Sublinear Section 4} under the additional assumption that $\psi \geq \psi_0 > 0$, by using the estimates from Section \ref{mu flow section}:
\begin{theorem} \label{thm:existence_sublinear_spositive}
	Let $(\Omega,\omega)$ be a strongly pseudoconvex K\"ahler manifold, $\psi\in C^{1,1}(\bar{\Omega\times\RR^-})$, $\psi\geq\psi_0>0$ and suppose that  \eqref{eq:sublinear condition} holds.
	Then \eqref{eq:initial equation} has a solution $u\in C^{\infty}(\bar\Omega)\cap\cH(\Omega)$, which is moreover an absolute minimizer of $J$ over $\mathcal{H}(\Omega)$.
\end{theorem}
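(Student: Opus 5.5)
We run the negative gradient flow \eqref{eq:neg gradient flow}, with the aim that it converges to a minimizer of $J$.

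\textbf{Setup.} Because $\psi\geq\psi_0>0$, the function $f(z,t,u):=\log\psi^n(z,u)$ is bounded below by $n\log\psi_0$. By \eqref{eq:upper of psi} it is bounded above by $C+\log(1+|u|)$. So $|f|\le K_1$ fails in general, but $|f|\le K_1+K_2|u|$ holds for suitable $K_1,K_2$. Also $f_u$ and $\nabla f$ are bounded on bounded $u$-ranges, since $\psi\in C^{1,1}$ and $\psi\ge\psi_0$. We apply Section \ref{mu flow section} with $p=\infty$, so that $\mu=\log$. To get globally bounded derivatives of $f$, we replace $\psi$ outside $[-R,0]$ by a smooth cutoff modification, with $R$ chosen below, and we mollify $\psi$ so that $f$ is smooth. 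Fix an initial datum $u_0\in\mathcal{H}(\Omega)$ with $\ddbar u_0\ge\delta_0\omega$ satisfying \eqref{compatibility condition}; for instance, solve $\MA(u_0)=\psi^n(z,0)$ with zero boundary data, which \cite{PSS12} provides. Theorem \ref{thm:estimate of flow} then gives a smooth solution on $Q_T$ for every $T$.

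\textbf{Uniform bound along the flow.} By \eqref{eq:negative derivative along flow}, $J(u(\cdot,t))\le J(u_0)$. Proposition \ref{sublinear_proper} then gives $E(u(\cdot,t))\le C$ uniformly in $t$. Corollary \ref{cor:hessianSob} bounds $\|u\|_{L^{p}}$, and hence $\|\MA(u)\|_{L^{1+\epsilon}}$. Here $\MA(u)=e^{u_t}\psi^n(z,u)$, and $u_t$ is not yet controlled. This is the main obstacle: to get a time-independent $C^0$ bound we need $u_t$ to be bounded, but the estimate $|u_t|\le CM$ of Proposition \ref{u t estimate} is in terms of $M=\|u\|_{C^0}$ itself.

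The first thing to try is an ODE argument on the maximum of $-u$. Alternatively, one can use Kołodziej's estimate applied to the elliptic problem at each fixed time, with right-hand side $e^{u_t}\psi^n$. For this we first get $u_t\le C$ from a maximum principle on $u_t$: differentiating the flow, $Lu_t=f_u u_t$, and $u_t=0$ on $\Gamma_T$. Since $f_u$ has no good sign, we instead compare with a barrier as in Proposition \ref{uniform estimate}, using the growth bound on $f$. This gives $\MA(u)\le C\psi^n(z,u)\in L^{1+\epsilon}$ uniformly, and Kołodziej's estimate then yields $\|u\|_{C^0}\le M_0$ independently of $t$. With $M_0$ in hand, choose the truncation level $R>M_0$, so the modification never affects the flow.

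\textbf{Convergence.} The estimates of Section \ref{mu flow section} give uniform $C^{k,k/2}$ bounds independent of $T$. Since $J(u(\cdot,t))$ is bounded below and nonincreasing, integrating \eqref{eq:negative derivative along flow} shows $\int_0^\infty\int_\Omega(\log\MA-\log\psi^n)(\MA-\psi^n)\,dt<\infty$. Pick a sequence $t_j\to\infty$ along which this integrand tends to $0$. By Arzelà–Ascoli, $u(\cdot,t_j)$ converges smoothly to a limit $u$ solving \eqref{eq:initial equation}. Smoothness up to the boundary follows from Theorem \ref{thm:C3alpha-estimate}, using $\psi\ge\psi_0$.

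\textbf{Minimality.} For an arbitrary $v\in\mathcal{H}(\Omega)$, approximate $v$ by strictly psh smooth $v_\epsilon$ satisfying compatibility, with $J(v_\epsilon)\to J(v)$. Run the flow from each $v_\epsilon$. Its limits are solutions $w_\epsilon$ with $J(w_\epsilon)\le J(v_\epsilon)$. All solutions satisfy a uniform $C^0$ bound via Lemma \ref{lemm:uniform_Linfty_sublinear} (or Remark 4.4) and hence uniform $C^{3,\alpha}$ bounds, so the set of solutions is compact. Therefore $\inf J$ over $\mathcal{H}(\Omega)$ equals $\inf J$ over solutions, and this infimum is attained. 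This yields an absolute minimizer.
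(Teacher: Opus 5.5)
There is a genuine gap in your ``uniform bound along the flow'' step. Nothing you describe yields a bound on $u_t$, or on $\sup_\Omega|u(\cdot,t)|$, that is independent of $t$.

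\begin{itemize}
\item Differentiating the flow gives $\partial_t u_t = u^{i\bar j}(u_t)_{i\bar j} - f_u u_t$ with $f_u = n\,\partial_x\psi/\psi$. This is $\le 0$ in exactly the regime of interest ($\psi$ decreasing in $x$). So the zeroth-order term drives $u_t$ upward, and the maximum principle only gives $\sup u_t\le Ce^{K_3 t}$.
\item The barrier of Proposition \ref{uniform estimate} does no better. With $K_2>0$ it gives only $u\ge -Ce^{K_2t}$, and its $T$-independent Case 2 requires $f$ bounded.
\item Proposition \ref{u t estimate} bounds $|u_t|$ by $C(\|u\|_{C^0(\ov{Q}_T)}+1)$, which is circular, as you noticed.
\end{itemize}

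So the claimed inequality $\MA(u)\le C\psi^n(z,u)$ is unsupported, and with it the constant $M_0$ and the choice ``$R>M_0$''. Any $T$-uniform estimate that Section \ref{mu flow section} does give for a truncated flow depends on the truncation level through $K_1$, so it cannot be used to choose $R$. Your convergence and minimality steps inherit the gap, since both assume $t$-independent bounds along the flow.

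The missing idea is that you never need an $R$-independent bound along the flow, only on its stationary limit. This is how the paper proceeds.

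\begin{enumerate}
\item Truncate so that $\psi_R\le\psi$, $\psi_R=\psi$ for $-R\le x\le 0$, and $\psi_R$ is independent of $x$ for $x\le -2R$. Then $f=\log\psi_R^n$ is bounded, so $K_2=0$, and Theorem \ref{thm:estimate of flow} gives $T$-independent (but $R$-dependent) estimates.
\item Along times $t_j\to\infty$ with $\frac{d}{dt}J_R(u(\cdot,t_j))\to 0$, extract a limit $w_R$ with $\MA(w_R)=\psi_R^n(z,w_R)$. Once $R\ge\|u_0\|_{C^0}$, it satisfies $J(w_R)\le J_R(w_R)\le J_R(u_0)=J(u_0)$.
\item For this elliptic solution $u_t$ has disappeared and $\MA(w_R)\le\psi^n(z,w_R)$. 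Hence Proposition \ref{sublinear_proper}, Corollary \ref{cor:hessianSob} and Ko\l odziej's estimate, as in Lemma \ref{lemm:uniform_Linfty_sublinear}, give a $C^0$ bound independent of $R$.
\item Only now take $R$ larger than that bound, so that $w_R$ solves \eqref{eq:initial equation}.
\end{enumerate}

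With this change, your nearly-minimizing initial data and the compactness of solutions via Theorem \ref{thm:C3alpha-estimate} complete the argument essentially as in the paper.
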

\begin{proof}
	We approximate $\psi$ by right-hand sides which are bounded from above; for each $m \geq 0$, choose $\psi_m(z,x)\in  C^{1,1} (\bar\Omega\times\RR)$ such that
    \begin{itemize}
    \item $\psi_m\nearrow \psi$ pointwise as $m\rightarrow\infty$,
    \item $\psi_m(z,x)=\psi(z,x)$ when $|x|\leq m$, and
    \item $\psi_m(z,x)$ is independent of $x$ when $|x|\geq 2m$.
	\end{itemize}
    Define functionals $J_m$ on $\mathcal{H}(\Omega)$ by
	\begin{align*}
		J_m(u)\coloneqq E(u)-\int_\Omega\Psi_m(z,u) \omega^n,
	\end{align*}
	where $\Psi_m(z,x)\coloneqq\int_x^0\psi_m^n(z,s)ds$. Note that $J \leq J_m$.

    Let $0 < a < \frac{1}{4}$ be arbitrary. Choose $u_0\in\cH(\Omega)$ such that $J_m(u_0) < a +\inf_{\cH(\Omega)}J_m$. By adding $\e\rho$ to $u_0$, for some $\e > 0$ sufficiently small, we may additionally assume that $\ddbar u_0 \geq\tilde{\epsilon}\omega$ and $J_m(u_0) < 2a +\inf_{\cH(\Omega)}J_m$, for some small $\tilde{\e}>0$.

    We modify $u_0$ further so that it satisfies the compatibility condition \eqref{compatibility condition}. Pick $\delta > 0$ sufficiently small and set $\Omega_\delta\coloneqq\{z\in\Omega~|~\dist_\omega(z,\Omega)>\delta\}$. Let $g\in C^{1,1}(\bar\Omega)$, $g > 0$, be such that $g=\MA(u_0)$ in $\Omega_\delta$ and $g =\log\psi_m^n(z,0)=\log\psi^n(z,0)$ on $\Omega\setminus \Omega_{\delta/2}$. Let $\tilde{u}_0\in\cH(\Omega)$ be the solution to
    \[
    \begin{cases}
    \MA(u)=g\\
    u|_{\p \Omega} = 0;
    \end{cases}
    \]
    clearly $\tilde{u}_0$ satisfies \eqref{compatibility condition}. Further, by choosing $\delta > 0$ appropriately small, we can ensure that $J_m(\tilde u_0) < 4a +\inf_{\cH(\Omega)}J_m$, using stability of the Monge--Amp\`ere operator and continuity of $J$ along uniformly convergent sequences. We replace $u_0$ with $\tilde u_0$ for the rest of the proof.

Now we may apply Theorem~\ref{thm:estimate of flow} (with $p = \infty$ and $f = \log \psi_m^n$) to find a smooth solution $u=u_{m,a}$ to the flow
	\begin{equation}
	\begin{cases}
		\log\MA(u)-u_t=\log\psi_m^n(z,u) & \text{in }Q=\Omega\times(0,\infty)  \\
		u=0 & \text{on }\p\Omega\times[0,\infty)  \\
		u(z,0)=u_0, & u(\cdot,t)\in\cH(\Omega).
	\end{cases}
\end{equation}
By \eqref{eq:negative derivative along flow}, we have for each $t > 0$ that:
\[
\inf_{\mathcal{H}(\Omega)} J_m \leq J_m(u(z, t)) \leq J_m(u_0),
\]
so there must exist some sequence of times $t_j\rightarrow\infty$ such that:
\begin{align*}
&\frac{d}{dt} J_m(u(z,t_j))  \\
=& -\int_\Omega\Big(\log\MA(u(z,t_j))-\log\psi^n(u(z, t_j))\Big)\Big(\MA(u(z, t_j))-\psi^n(u(z, t_j))\Big)\omega^n\rightarrow 0.
\end{align*}
Since Theorem~\ref{thm:estimate of flow} further implies that
\begin{align*}
	\|u\|_{C^{2,1}(\bar\Omega\times\RR)}\leq C,
\end{align*}
we can choose a subsequence of the $t_j$ such that $u(z, t_j)$ converges uniformly to some $w_{m,a} \in \mathcal{H}(\Omega)$. It follows then from continuity properties of the Monge--Amp\`ere operator that $w_{m,a}$ satisfies:
\begin{align*}
	\begin{cases}
		\MA(w_{m,a})=\psi_m^n(z,w_{m,a}), &\text{ in }\Omega, \\
		w_{m,a}=0, &\text{ on }\p\Omega,
	\end{cases}
\end{align*}
Since $J_m$ is also easily shown to be continuous under uniform convergence, we further have:
\[
J_m(w_{m,a}) \leq J_m(u_0) \leq 4a + \inf_{\mathcal{H}(\Omega)} J_m \leq 1 + \inf_{\mathcal{H}(\Omega)} J_0,
\]
which is a uniform constant. Further, arguing exactly as in Lemma \ref{lemm:uniform_Linfty_sublinear}, we see that $w_{m,a}$ is uniformly bounded, independent of $m$ and $a$ -- indeed, by Proposition \ref{sublinear_proper}, we have:
\[
E(w_{m,a}) \leq \theta^{-1}(J(w_{m,a}) + K) \leq \theta^{-1}(J_m(w_{m,a}) + K) \leq C;
\]
the rest of the argument only depends on an upper bound for $\psi_m \leq \psi$.

Choosing $m > \Vert w\Vert_{L^\infty}$ now implies that $\psi_m(z, w_{m,a}) = \psi(z, w_{m,a})$, so that $w_a := w_{m,a}$ solves \eqref{eq:initial equation}. Since $\psi \geq \psi_0 > 0$, Theorem~\ref{thm:C3alpha-estimate} now implies that $w_a$ is uniformly bounded in $C^{3,\alpha}(\ov{\Omega})$ independent of $a$; we can thus take the limit $a \rightarrow 0$ to finish.
\end{proof}

We now show Theorem \ref{Sublinear Section 4}:

\begin{proof}[Proof of Theorem \ref{Sublinear Section 4}]
Let $\epsilon \in (0, 1)$ and consider:
\begin{equation} \label{eq:approx}
    \begin{cases}
        \MA(u_\epsilon) = (\psi^n(z, u_\epsilon)+\epsilon)\omega^n  & \text{in } \Omega, \\
        u_\epsilon = 0 & \text{on } \p\Omega.
    \end{cases}
\end{equation}
By Theorem~\ref{thm:existence_sublinear_spositive}, there exists a solution $u_\epsilon\in C^{3,\alpha}(\bar\Omega)\cap\cH(\Omega)$ to \eqref{eq:approx} which minimizes the functional $J_\e$:
\[
	J_\epsilon(u)\coloneqq E(u)-\int_\Omega\Psi_\epsilon(z,u) \omega^n,\quad u\in\mathcal{H}(\Omega).
\]
Here, we have defined $\Psi_\epsilon(z,x)\coloneqq\int_x^0(\psi^n(z,s)+\epsilon)ds=\Psi(z,x)+\epsilon|x|$. Note that $J_\e \leq J$, so that $J_\e(u_\e) = \inf_{\mathcal{H}(\Omega)} J_\e \leq \inf_{\mathcal{H}(\Omega)} J$.

Since it is clear that:
\[
\Psi_\e(z, x) \leq K + \frac{(1-\frac{\theta}{2})\lambda_1^n}{n+1}|x|^{n+1}
\]
for $\e$ sufficiently small and $\theta$ as in Proposition \ref{sublinear_proper}, we can repeat the arguments of Lemma \ref{lemm:uniform_Linfty_sublinear} to deduce a uniform $L^\infty$-bound on $u_\e$, independent of $\e$.

It follows now from Theorem \ref{thm:C3alpha-estimate} that $u_\e$ is uniformly bounded in $C^2(\ov{\Omega})$, independent of $\e$; by the Arzel\`a--Ascoli theorem, there exists a subsequence $u_{\epsilon_j}$ converging uniformly on $\bar{\Omega}$ to $u \in C^{1,1}(\bar{\Omega})$ which solves \eqref{eq:initial equation}. Since $\psi(z, x) > 0$ for $x < 0$, the interior regularity theory of Caffarelli--Kohn--Nirenberg--Spruck \cite{CKNS85} and Evans--Krylov \cite{Kry82} implies that $u \in C^\infty(\Omega)$.

Finally, we observe that condition \eqref{eq:sublinear growth cond go to 0-} implies that 0 is not a minimizer, since it implies that $J(c u_1)$ is decreasing in $c$ for $c > 0$ sufficiently small. Thus, the solution $u$ cannot be zero.
\end{proof}


\section{The superlinear case} \label{superlinear section}

In this section, we will prove Theorem \ref{superlinear_theorem_intro}, reproduced here for the reader's convenience:

\begin{theorem}[Superlinear]\label{superlinear_theorem}
	Let $\psi\in C(\bar{\Omega\times\RR_-})\cap C^{1,1}(\bar\Omega\times\RR_-)$. Suppose that $\psi(z, x) > 0$ for $x < 0$, and that there exists constants $0 < \sigma < 2n$, $0 < \theta < 1$, and $M > 0$ such that:
\begin{align}
&\lim_{x\>0^-}\frac{\psi(z,x)}{|x|}<\lambda_1 \quad \text{uniformly for } z\in \ov{\Omega} \label{eq:growth cond go to 0-}\\
&\lim_{x\rightarrow -\infty} \psi(z, x)e^{-\frac{\sigma}{n} \abs{x}^{1+\frac{1}{n}}} <\infty \quad \text{uniformly for }z\in\ov{\Omega}\label{eq:exp growth}\\
&\int_x^0\psi^n(z,s)ds \leq\frac{1-\theta}{n+1}|x|\psi^n(z,x) \quad \text{ for all $x \leq -M$ and $z\in\ov{\Omega}$.} \label{eq:integration growth upper}.
\end{align}
Then there exists a solution $u\in C^{1,1}(\bar\Omega) \cap C^{\infty}(\Omega) \cap \cH(\Omega)$ of \eqref{eq:initial equation}.
\end{theorem}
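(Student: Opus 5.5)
We follow the mountain-pass strategy of \cite[Theorem 1.2]{CW01}, using the $\mu$-flow of Section \ref{mu flow section} with some fixed $p\in(n+1,\infty)$ as a deformation in place of a Palais--Smale argument. As in the sublinear case, we first replace $\psi$ by $\psi_\e$ with $\psi_\e\geq\e^{1/n}>0$ (for instance $\psi_\e^n=\psi^n+\e$ near $0$, modified so that \eqref{eq:growth cond go to 0-}--\eqref{eq:integration growth upper} still hold uniformly in $\e$). We then truncate $\psi_\e$ at large $|x|$ as in the proof of Theorem \ref{thm:existence_sublinear_spositive}, so that $f=\mu(\psi^n_{\e,m}(z,u))$ satisfies the hypotheses on $f$ in Theorem \ref{thm:estimate of flow}. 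Along the flow
\[
F(u)-u_t=\mu(\psi^n(z,u)),
\]
$J$ is nonincreasing, because $\mu$ is increasing:
\[
\frac{d}{dt}J=-\int_\Omega\big(\mu(\MA u)-\mu(\psi^n)\big)\big(\MA u-\psi^n\big)\omega^n\leq0 .
\]

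\textbf{Geometry and the minimax level.} By \eqref{eq:growth cond go to 0-}, $\Psi(z,x)\leq\frac{(1-\eta)\lambda_1^n}{n+1}|x|^{n+1}+C|x|^{q}$ for some $q>n+1$. Together with \eqref{eq:Rayleigh Quotient} and Corollary \ref{cor:hessianSob}, this gives $J(u)\geq \eta E(u)-CE(u)^{q/(n+1)}$ when $E(u)$ is small; the exponential growth \eqref{eq:exp growth} is controlled via Proposition \ref{prop:MT_inequ} because $\sigma<2n$. Hence there is $r>0$ with $J\geq c_0>0$ on $\{E=r\}$. The bound \eqref{eq:superlinear condition intro} gives $J(su_1)\to-\infty$ as $s\to\infty$; fix $e=Su_1$ (smoothed) with $E(e)>r$ and $J(e)<0$. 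Let $\Gamma$ be the set of continuous paths $\gamma:[0,1]\to\mathcal H(\Omega)$ (continuous in $C^0$, strictly psh, compatible) with $\gamma(0)=\e'\rho$ small and $\gamma(1)=e$, and set $c=\inf_\Gamma\max_{s} J(\gamma(s))\geq c_0$.

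\textbf{Deformation.} Flow every point of a near-optimal path $\gamma$. By Lemma \ref{stability lemma}, the flowed path $\gamma_t$ remains continuous. Points with $J<0$ can be chosen to lie in the unstable region: \eqref{eq:integration growth upper} gives $J(u)\leq \frac{1}{n+1}\int(-u)(\MA u-\psi^n(u)) +C$, and a comparison argument shows that flow lines starting with $J$ very negative blow up in $C^0$ and never enter $\{J\geq 0\}$. So the endpoints stay below level $c_0$ and $\gamma_t\in\Gamma$. Since $J$ is nonincreasing and bounded below by $c$ along the path maximum, we can find points $u_j=\gamma_{t_j}(s_j)$ with $J(u_j)\to c$ and $\frac{d}{dt}J\to0$. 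In particular $\int(\mu(\MA u_j)-\mu(\psi^n(u_j)))(\MA u_j-\psi^n(u_j))\to0$.

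\textbf{The key a priori bound (main obstacle).} Points whose flow line stays bounded in $C^0$ converge to solutions, so everything hinges on showing $\sup|u_j|\leq C$ uniformly in $\e,m$. Our first attempt is the Ambrosetti--Rabinowitz trick. Combine $J(u_j)\leq c+1$ with the near-criticality: the defect $\MA u_j-\psi^n(u_j)$ is small in a weighted sense, which gives $\int(-u_j)\MA u_j\approx\int(-u_j)\psi^n(u_j)$. Then \eqref{eq:integration growth upper} yields $\theta E(u_j)\leq C$. An energy bound, Proposition \ref{prop:MT_inequ} with \eqref{eq:exp growth} and $\sigma<2n$ then place $\psi^n(u_j)$ in $L^{1+\delta}$, and Ko\l odziej's estimate \cite{Kol98} gives the $L^\infty$ bound. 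A delicate point is that $\sigma E^{1/n}$ must stay below $2n$ times the relevant constant; this may force rescaling or a blow-up contradiction argument in the spirit of the Remark after Lemma \ref{lemm:uniform_Linfty_sublinear}. The hard part is making the ``approximately critical'' step quantitative with $\mu$ nonlinear; here the $\mu(t)=t^{1/p}$ regime matters, giving $|\MA-\psi^n|\lesssim$ the integrand. Given the $C^0$ bound, Theorem \ref{thm:estimate of flow} gives smooth convergence to a solution $u_\e$ with $J(u_\e)\geq c_0>0$, hence $u_\e\neq0$.

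\textbf{Removing $\e$.} The $L^\infty$ bounds on $u_\e$ are uniform in $\e$, so Theorem \ref{thm:C3alpha-estimate} gives uniform $C^2$ bounds. A subsequence converges to a $C^{1,1}$ solution $u$ with $J(u)\geq c_0>0$, so $u\neq 0$. Interior smoothness follows from \cite{CKNS85,Kry82}, as in the proof of Theorem \ref{Sublinear Section 4}.
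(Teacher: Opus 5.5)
\textbf{There are genuine gaps.} Your architecture matches the paper's: a mountain pass for a truncated, regularized functional, the $\mu$-flow as the deformation, Ambrosetti--Rabinowitz for energy bounds, then Moser--Trudinger and Ko\l odziej. However, the step you call the main obstacle is the core of the proof, and you leave it open.

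\textbf{The energy bound at near-critical points.} The relation $\int_\Omega(-u_j)\MA(u_j)\omega^n\approx\int_\Omega(-u_j)\psi^n(u_j)\omega^n$ is useless unless the error is bounded by a \emph{sublinear} power of quantities not yet known to be finite. Your heuristic that $|\MA-\psi^n|$ is dominated by the integrand is also false. Where $\mu(t)=t^{1/p}$, the integrand equals $|\MA^{1/p}-\psi^{n/p}|\cdot|\MA-\psi^n|$, which is smaller than $|\MA-\psi^n|$ whenever the first factor is $<1$.

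The missing idea is the design of $\mu$. Since $\mu(t)-t^{1/p}$ is nondecreasing, \eqref{slope inequality} applies. Set $\alpha=\MA(u)^{1/p}$ and $\beta=\psi_\delta^{n/p}(z,u)$. At good times this gives $\int_\Omega|\alpha-\beta|^{p+1}\omega^n\leq-\frac{d}{dt}J_\delta\leq a$. Then $|x^p-y^p|\leq p|x-y|(x^{p-1}+y^{p-1})$, H\"older twice, and Corollary \ref{cor:hessianSob} give $\bigl|\int_\Omega u(\alpha^p-\beta^p)\omega^n\bigr|\leq Ka^{\frac{1}{p+1}}\bigl(\int_\Omega|u|\beta^p\omega^n\bigr)^{1-\frac{n}{p(n+1)}}$. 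Together with \eqref{eq:integration growth upper}, which gives $E\leq K\int_\Omega|u|\beta^p\omega^n$, this closes to $E\leq K$.

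\textbf{The truncation.} The argument above only works if the truncation preserves \eqref{eq:integration growth upper} uniformly in $m$. The paper takes $\psi_m^n=K_m|x|^{p-1}$ for large $|x|$, with $p>2(n+1)/\theta$. The constant truncation from the sublinear proof violates \eqref{eq:integration growth upper} beyond $2m$. Your bounds would then depend on $m$, and the truncation could never be removed; indeed your last step only removes $\e$, not $m$. Similarly, your $\e$-perturbation cannot preserve \eqref{eq:growth cond go to 0-}. The paper keeps the mountain-pass geometry for the unperturbed $J_m$ and uses $c_{m,\delta}\to c_m$.

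\textbf{From bounds at isolated points to a solution.} An energy or $L^\infty$ bound at isolated points $u_j=\gamma_{t_j}(s_j)$ on varying flow lines does not yield a solution. The estimates of Section \ref{mu flow section} depend on $\|u\|_{C^0(\ov{Q}_T)}$ over the whole cylinder, not on a single time slice. Also, Ko\l odziej's estimate needs $\MA(u_j)$, not $\psi^n(u_j)$, in $L^{1+\gamma}$. This could be recovered from $\int|\alpha-\beta|^{p+1}\leq a$, but compactness would then have to come from pluripotential theory rather than Theorem \ref{thm:estimate of flow}.

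The paper instead fixes one flow line $s_0\in I_\infty=\bigcap_t\{s: J_\delta(u^s(\cdot,t))\geq c_\delta-a\}$. This set is nonempty by concatenating the flowed path with the two endpoint flow lines. Note that your flowed path $\gamma_t$ itself is not in $\Gamma$, since its endpoints move; your blow-up claim for very negative flow lines is neither proved nor needed. Along the chosen flow line, $J_\delta\in[c_\delta-a,c_\delta+a]$ for all $t$, so the bad times have measure $<2$. Step 5 then takes record times, shifts them to nearby good times, and uses $|u_t|\leq C(1+N_t)$ and $|\nabla u|\leq C(1+N_t^p)$. This shows $u\leq -M/2$ on a ball of radius $\sim M^{1-p}$, contradicting the $L^q$ bound coming from the energy. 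The resulting $t$-uniform $C^0$ bound is what lets Theorem \ref{thm:estimate of flow} produce a limiting solution.

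\textbf{The Moser--Trudinger point you flag is genuine.} \eqref{eq:MT_ineq} controls $\int_\Omega e^{c|u|^{1+1/n}}\omega^n$ only for $c<2n/E(u)^{1/n}$. The paper's Step 6 applies it with only $E\leq K$ in hand, so it does not supply an argument for this point either. Your suggested rescaling from the Remark after Lemma \ref{lemm:uniform_Linfty_sublinear} relies on linear growth of $\psi$ and does not transfer to exponential growth.
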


As remarked in the Introduction, by Gr\"onwall's inequality, condition \eqref{eq:integration growth upper} implies that $\psi(z, x)$ is growing ``superlinearly" in the $x$-variable. Specifically, it implies:
\begin{equation}\label{eq:superlinear condition}
\lim_{x\>-\infty}\frac{\psi(z,x)}{|x|}>\lambda_1 \quad \text{uniformly for } z\in \ov{\Omega},
\end{equation}
which mirrors one of the assumptions in Theorem \ref{Sublinear Theorem}.

Recall also that $J$ is not proper in the superlinear case, and its critical points will be saddles. We thus employ a mountain-pass type argument to show the existence of solutions. We again largely follow the proof of Chou--Wang \cite{CW01}.

\begin{proof} We break the proof into several steps.

\smallskip

\noindent{\bf Step 1:} We start by approximating our problem, both by adding a small $\delta$ to $\psi$ to ensure non-degeneracy, but also by curtailing the growth of $\psi$ at $-\infty$ (if necessary) to be at most polynomial.

We start by fixing some notation. First, we choose $A > 0$ such that, up to possibly increasing $M \geq 1$ and shrinking $\theta$, we have by \eqref{eq:superlinear condition} and \eqref{eq:exp growth}:
\begin{equation}\label{large x bounds}
(1 + \theta)\lambda_1^n\abs{x}^n \leq \psi^n(z, x) \leq Ae^{\sigma\abs{x}^{1+\frac{1}{n}}}\text{ for all $x\leq -M$ and $z\in\ov{\Omega}$.}
\end{equation}
Further, by \eqref{eq:growth cond go to 0-}, we can assume:
\begin{equation}\label{small x bounds}
\psi^n(z,x) \leq (1 - \theta)\lambda_1^n \abs{x}^n \text{ for all $-M^{-1} \leq x \leq 0$ and $z\in \ov{\Omega}$.}
\end{equation}

In the arguments below, we use $K$ to denote a uniform constant, whose exact value may change from line to line, but which can always be computed solely in terms of $(\Omega,\omega)$, $\psi$, and the fixed integer $p$ to be chosen momentarily. We will typically use $C$ to denote a constant which is not necessarily uniform (and whose value may also change from line to line).\\

We begin by restricting the growth of $\psi$ to be polynomial. Let $m \geq M$. If there exists an integer $p \in (n+1, \infty)$ such that:
\begin{equation} \label{eq:stronger growth cond}
\lim_{x\rightarrow-\infty}\frac{\psi(z, x)}{\abs{x}^{p/n}} = 0 \quad\text{ uniformly for }z\in \bar\Omega,
\end{equation}
then set $\psi_m = \psi$ for all $m$. Otherwise, we fix an integer $p \in (n+1, \infty)$ such that $\frac{n+1}{p} < \frac{\theta}{2}$ and choose $\psi_m \in C^{1,1}(\ov{\Omega}\times \RR)$ so that:
\begin{itemize}
\item $\psi_m^n(z,x) = \psi^n(z,x)$ for $\abs{x} \leq m$;
\item $\psi_m^n(z,x) = K_m\abs{x}^{p-1}$ for $\abs{x} \geq m + \delta_m$;
\item $\psi^n(z,-m) - 1 \leq \psi_m^n(z,x) \leq K_m \abs{m+\delta_m}^{p-1}$ for $m < \abs{x} < m+\delta_m$; and
\item $\psi_m^n(z,x)$ still satisfies \eqref{large x bounds}.
\end{itemize}
Here, we set $B \coloneqq \sup_{z\in\ov{\Omega}} \psi^n(z, -m) > 0$ and choose the constants $K_m, \delta_m > 0$ to be
\[
\delta_m = (B+1)^{-1} \quad\text{ and }\quad K_m = (B + 1)m^{1-p}.
\]
It is the clear that $\psi_m(z,x)$ also satisfies both \eqref{small x bounds} and \eqref{eq:stronger growth cond}. In addition, $\psi_m$ satisfies \eqref{eq:integration growth upper} with $\theta$ replaced by $\frac{\theta}{2}$, since for $x < -(m  +\delta_m)$ and $m$ sufficiently large:
\begin{align*}
\int_x^0 \psi_m^n(z, s) ds &\leq \int_{-m}^0 \psi^n(z, s) ds + \int^{-m}_{-m-\delta_m} \psi_m^n(z, s) ds + \int^{-m-\delta_m}_x K_m\abs{s}^{p-1} ds\\
&\leq \frac{1-\theta}{n+1} m\psi^n(z, -m) + 1 + K_m\frac{\abs{x}^{p}}{p} - K_m\frac{(m + \delta_m)^{p}}{p}\\
&\leq \frac{1-(\theta/2)}{n+1} \abs{x} \psi_m(z, x);
\end{align*}
for $-(m+\delta_m) < x < -m$ the inequality is immediate.\\

Now, for each $\delta \geq 0$ sufficiently small, we choose a nonnegative, smooth function  $\eta_\delta$, such that $0\leq\eta_\delta\leq 1$, $\eta_\delta(z)=0$ when $\dist_\omega(z,\p\Omega)\leq\delta$, and $\eta_\delta(z)=1$ when $\dist_\omega(z,\p\Omega) \geq 2\delta$. Note that we set $\eta_0\equiv1$. We then define:
\begin{align*}
	\psi_{m,\delta}^n(z,x) :=\eta_\delta(z)\psi^n_m(z,x)+\delta^2.
\end{align*}

Finally, we define:
\begin{align*}
&\Psi_m(z, x) := \int_x^0 \psi_m^n(z, s)\, ds\quad\quad \Psi_{m, \delta}(z, x) := \int_x^0 \psi_{m, \delta}^n(z, s)\, ds
\end{align*}
and
\begin{align*}
&J_m := E - \int_\Omega \Psi_m \omega^n \quad\quad J_{m, \delta} := E - \int_\Omega \Psi_{m, \delta} \omega^n.
\end{align*}
It is easy to see that $J_{m,\delta}$ is an Euler--Lagrange functional for the non-degenerate equation
\begin{equation}\label{eq:perturbed equation}
	\begin{cases}
		(\ddc u)^n=\psi_{m,\delta}^n(z,u)\omega^n, &\text{ in }\Omega, \\
		u=0, &\text{ on }\p\Omega.
	\end{cases}
\end{equation}

\smallskip

\noindent{\bf Step 2:} Using the polynomial growth condition \eqref{eq:stronger growth cond}, we show that, for each $\delta \geq 0$, the $J_{m,\delta}$ admit a mountain-pass-like point, and that the critical values of these points converge as $\delta \rightarrow 0$ to the critical value of $J_m$, which is strictly positive.

Start by combining \eqref{small x bounds} and \eqref{eq:stronger growth cond} to see that there exists another large constant $K_m >0$ (which again depends only uniform quantities and $m$) such that
\begin{align}\label{eq:Psi_growth_superlinear}
	\Psi_m(z,u) \leq \frac{(1-\theta)\lambda_1^n}{n+1}|u|^{n+1} +K_m |u|^{p+1}.
\end{align}
This will force $J_m$ to have a saddle-like point. From the above:
\begin{align*}
	J_m(u) \geq& E(u)-(1-\theta)\frac{\lambda_1^n}{n+1}\int_\Omega|u|^{n+1}\omega^n +K_m\int_\Omega|u|^{p+1}\omega^n   \\
	\geq& E(u)-(1-\theta)E(u)-K_mE(u)^{\frac{p+1}{n+1}} \\
	=&\theta E(u)-K_mE(u)^{\frac{p+1}{n+1}},
\end{align*}
using the Rayleigh quotient formula \eqref{eq:Rayleigh Quotient} and the energy inequality \eqref{eq:Sobolev ineq}. Thus, there exists a small constant $k_m > 0$, again depending only on uniform quantities and $m$, such that for all $u \in \cH(\Omega)$ with $E(u) = k_m^{n+1}$, we have
\begin{align*}
	J_m(u)\geq \frac{\theta k_m^{n+1}}{n+1} > 0.
\end{align*}
On the other hand, we have $J_m(0) = 0$ while $J_m(Ku_1) \leq -1$ for all $m \geq M$ and $K >0$ sufficiently large, by the lower bound in \eqref{large x bounds} (recall that $u_1$ is the first eigenfunction). It follows that $J_m$ should have a saddle-like point somewhere between these two extremes.\\

It will be convenient to regularize $Ku_1$ slightly -- let $v_1\in \mathcal{H}(\Omega)\cap C^\infty(\ov{\Omega})$ be sufficiently close to $Ku_1$ so that $J_m(v_1) < -\frac{1}{2}$ for all $m$ and $\MA(v_1) > 0$ on all of $\ov{\Omega}$. We also fix $v_0 := \e v_1$, for some small $\e > 0$ such that $E(v_0) < k_m^{n+1}$ and $J_m(v_0) < \frac{\theta k_m^{n+1}}{2(n+1)}$. \\

We now define:
\begin{align*}
	\Gamma_m  \coloneqq\left\{\gamma\in C([0,1], \cH(\Omega)\cap C^{3,1}(\bar\Omega))~|~ \gamma(0)=v_0, \gamma(1) = v_1, \MA(\gamma(s))>0 \text{ on }\bar\Omega\right\},
\end{align*}
and
\begin{equation} \label{eq:min-max const zero}
	c_m =\inf_{\gamma\in\Gamma_m}\sup_{s\in[0,1]}J_m(\gamma(s)).
\end{equation}
Observe that $c_m > 0$, since $E$ is continuous along any path $\gamma\in \Gamma$.

We also define:
\begin{equation}\label{eq:min-max const}
	c_{m,\delta} =\inf_{\gamma\in\Gamma_m}\sup_{s\in[0,1]}J_{m, \delta}(\gamma(s)),
\end{equation}
Since any fixed $\gamma(s)\in \Gamma_m$ is uniformly bounded, $J_{m,s}(\gamma(s))$ converges uniformly in $s$ to $J_m(\gamma(s))$ as $\delta\rightarrow 0$. It follows that:
\begin{align*}
	c_m\geq \bar{\lim_{\delta\>0}}c_{m,\delta}.
\end{align*}
Since $\psi_{m,\delta}^n(z,x)\leq \psi_m^n(z,x)+\delta^2$, we see that
\begin{align*}
	c_m=\lim_{\delta\>0}c_{m,\delta}.
\end{align*}
It follows that $c_{m,\delta}\geq\frac{1}{2}c_m \geq \frac{\theta k_m^{n+1}}{2(n+1)} >0$ for all $\delta$ sufficiently small.

Note also that the $c_{m,\delta}$ are uniformly bounded from above, by $E(K v_1)$ for example. \\

\noindent{\bf Step 3:} We now set up an appropriate path of flows, which will eventually converge to a solution $u_{m,\delta}$ to \eqref{eq:perturbed equation} with $J_{m,\delta}(u_{m,\delta}) = c_{m,\delta}$; convergence will be established after proving some further estimates in the next two steps. To simplify notation, we fix $m$ and omit it from most of the subscripts below.

Let $0 < a < \frac{1}{4}c_m$, and choose $\gamma\in\Gamma$ with
\begin{align}\label{for reference}
	\sup_{s\in[0,1]}J_\delta(\gamma(s))\leq c_\delta+ a.
\end{align}
We construct a smooth function $\mu$ as in Lemma \ref{construction of mu} and consider the parabolic equation
\begin{equation}\label{eq:perturbed flow}
	\begin{cases}
		\mu(\MA(u))-u_t=\mu(\psi_\delta^n(z,u)) & \text{in }Q=\Omega\times(0,\infty)  \\
		u(z,0)=\gamma(s), & u(\cdot,t)\in\cH(\Omega,\omega).
	\end{cases}
\end{equation}
As done in the proof of Theorem~\ref{thm:existence_sublinear_spositive}, we can modify $\gamma$ to a new path in $\Gamma$, which still satisfies \eqref{for reference}, and has each $\gamma(s)\in\Gamma$ satisfying the compatibility condition \eqref{compatibility condition} from Theorem~\ref{thm:estimate of flow}.

By \eqref{eq:stronger growth cond}, one has
\begin{align*}
	|\mu(\psi_\delta^n(z,x))|\leq C(1+|x|)
\end{align*}
for large $x$. It follows from Theorem~\ref{thm:estimate of flow} that there is a solution $u^s(\cdot,t)$ of \eqref{eq:perturbed flow}, for each $s\in[0,1]$ and all $t\geq 0$; these solutions are not under any uniform control however. In particular, it is not obvious that the $u^s$ should remain bounded as $t\rightarrow\infty$. \\

In the later steps, we will show that this indeed the case for those $u^s$ which remain near the critical point for all large time. We finish this step by showing that such $s$ exist.

For each $t > 0$, let $I_t:=\{s\in[0,1]~|~ J_\delta(u^s(\cdot,t))\geq c_\delta-a\}$. By definition, $I_t$ is a closed subset of $[0,1]$, and $I_t\subseteq I_{t'}$ for any $t\geq t'$. Set
\begin{align*}
	I_\infty:=\bigcap_{t\geq0} I_t.
\end{align*}
We see that $I_\infty$ is non-empty; if it were, there would exist some fixed $t >0$ such that $I_t=\emptyset$, i.e., $J_\delta(u^s(\cdot,t))\leq c_\delta-a$ for all $s\in[0,1]$. This would imply that
\begin{align*}
	\sup_{u\in\tilde{\gamma}^t} J_\delta(u) \leq c_\delta-a,
\end{align*}
where $\tilde{\gamma}^t\in \Gamma$ is the path defined by concatenating the three continuous paths:
\[
s \to u^0(z, s),\ \  s\to u^s(z, t),\ \text{ and }s\to u^1(z, s);
\]
continuity of the middle path follows from the stability estimate in Lemma \ref{stability lemma}. This would then contradict the definition of $c_\delta$, so that $I_\infty$ cannot be empty.\\

\noindent{\bf Step 4:} Fix $s_0 \in I_\infty$ for the rest of the proof. The goal of this step is to establish uniform energy control over $u^{s_0}$ for a sequence of times going to infinity.

Specifically, we define the set of good times:
\[
T_0 \coloneqq\left \{ t > 0~\middle|~ \frac{d}{dt}J_\delta(u^{s_0}(z, t)) \geq -a\right\}.
\]
Since $c_\delta - a \leq J_\delta(u^{s_0}(z, t)) \leq c_\delta + a$ for all $t$, we see that the complement $T_0^c$ must have measure $< 2$.

Fix some $t \in T_0$; we suppress $t$ in the notation below. In addition, we set:
\begin{align*}
	\alpha:=\Big(\MA(u^{s_0}(z,t))\Big)^{\frac{1}{p}} \quad\text{ and }\quad
	\beta:=(\psi_\delta^n(z,u^{s_0}(z,t)))^{\frac{1}{p}}.
\end{align*}
We then aim to show that there exists a uniform $K > 0$ such that both:
\begin{equation}\label{key uniform estimate}
E(u^{s_0}(z, t)) = \frac{1}{n+1}\int_\Omega (-u^{s_0})\alpha^p \leq K\quad \text{ and }\quad \int_\Omega (-u^{s_0})\beta^p\omega^n \leq K
\end{equation}
for all $\delta$ and $a$ uniformly small. We may assume that both $E(u^{s_0}), \int_\Omega(-u^{s_0})\beta^p\omega^n \geq 1$ without loss of generality.

Start by observing that \eqref{eq:integration growth upper} and the definition of $\psi_\delta$ implies that for $z\in \Omega$ where $u^{s_0}(z, t) < -M$, we have:
\begin{align*}
	\Psi_\delta(z,u^{s_0})
	=&-\delta^2u^{s_0}+\Psi(z,u^{s_0})\eta_\delta(z) \\
	\leq&\delta^2|u^{s_0}|+\frac{1-\theta}{n+1}|u^{s_0}|\psi^n(z,u^{s_0})\eta_\delta(z) \\
	=&\delta^2\frac{n-\theta}{n+1}|u^{s_0}|+\frac{1-\theta}{n+1}|u^{s_0}|\psi^n_\delta(z,u^{s_0}).
\end{align*}
It follows now from \eqref{eq:Sobolev ineq} and the uniform upper bound for $c_\delta$ (and hence $a$) that:
\begin{align}
	E(u^{s_0})\leq& J(u^{s_0}) + \int_\Omega\Psi_\delta(z,u^{s_0})\omega^n \nonumber \\
	\leq& (c_\delta+a) + \int_{\{u^{s_0} > -M\}} \Psi_\delta(z, u^{s_0})\omega^n +  \delta^2\frac{n+\theta}{n+1}\int_\Omega|u^{s_0}|\omega^n +\frac{1-\theta}{n+1}\int_\Omega|u^{s_0}|\beta^p\omega^n \nonumber \\
	\leq&K + \delta^2KE(u^{s_0})^{\frac{1}{n+1}} +\frac{1-\theta}{n+1}\int_\Omega|u^{s_0}|\beta^p\omega^n	\label{eq:E_k leq Psi_delta integ +c}\\
	\leq&K + \delta^2KE(u^{s_0}) +\frac{1-\theta}{n+1}\int_\Omega|u^{s_0}|\beta^p\omega^n. \nonumber
\end{align}
It follows that, for $\delta$ sufficiently small, we have:
\begin{equation}\label{compare eqn}
E(u^{s_0}) \leq K \int_\Omega\abs{u^{s_0}}\beta^p\omega^n .
\end{equation}
It will thus suffice to bound $\int_\Omega \abs{u^{s_0}}\beta^p\omega^n$. To do so, start by rewriting \eqref{eq:E_k leq Psi_delta integ +c} as:
\begin{equation}\label{rewrite eqn}
	\frac{\theta}{n+1}\int_\Omega \abs{u^{s_0}} \beta^p\omega^n \leq K + \delta^2K E(u^{s_0}) +\frac{1}{n+1}\int_\Omega|u^{s_0}|(\beta^p - \alpha^p)\omega^n.
\end{equation}
We will estimate the last term by using the good time condition $t\in T_0$. By \eqref{slope inequality}, we have
\[
	(r-s)(\mu(r)-\mu(s))\geq (r-s)(r^{\frac{1}{p}}-s^{\frac{1}{p}})
	\quad\text{ for }r, s>0,
\]
so \eqref{eq:negative derivative along flow} implies that
\begin{align*}
	a \geq -\frac{d}{dt}J_\delta(u^{s_0}(\cdot,t)) &=\int_\Omega(\alpha^p-\beta^p)(\mu(\alpha^p)-\mu(\beta^p))\omega^n \\
&\geq \int_\Omega(\alpha^p - \beta^p)(\alpha - \beta)\omega^n\\
& \geq \int_\Omega|\alpha-\beta|^{p+1}\omega^n.
\end{align*}
Now, by using the elementary inequality $\abs{x^p - y^p} \leq p \abs{x - y}(x^{p-1} + y^{p-1})$, for $x, y \geq 0$, and the H\"older inequality twice, we estimate:
\begin{align*}
	\left|\int_\Omega u^{s_0}(\alpha^p-\beta^p)\omega^n \right|
	\leq& K\int_\Omega|u^{s_0}|\cdot|\alpha-\beta|\cdot|\alpha^{p-1}+\beta^{p-1}|\omega^n  \\
	\leq&K\left(\int_\Omega|\alpha-\beta|^{p+1}\omega^n\right)^{\frac{1}{p+1}}
	   \left(\int_\Omega|u^{s_0}|^{\frac{p+1}{p}}(\alpha^{p-1}+\beta^{p-1})^{\frac{p+1}{p}}\omega^n\right)^{\frac{p}{p+1}} \\
	  \leq &K a^{\frac{1}{p+1}} \left(\int_\Omega|u^{s_0}|^{\frac{p+1}{p^2}}\cdot|u^{s_0}|^{\frac{p^2-1}{p^2}}(\alpha^{p-1}+\beta^{p-1})^{\frac{p+1}{p}}\omega^n\right)^{\frac{p}{p+1}} \\
	  \leq&K a^{\frac{1}{p+1}} \left(\int_\Omega|u^{s_0}|^{p+1}\omega^n\right)^{\frac{1}{p(p+1)}}  \left(\int_\Omega|u^{s_0}|(\alpha^{p-1}+\beta^{p-1})^{\frac{p}{p-1}}\omega^n\right)^{\frac{p-1}{p}}
\end{align*}
Using the power mean inequality, in the form $(x^{p-1} + y^{p-1})^{\frac{p}{p-1}} \leq 2^\frac{1}{p-1}\left(x^p + y^p\right)$, for $x,y \geq 0$, and then \eqref{eq:Sobolev ineq}, we conclude that:
\begin{align*}
\left|\int_\Omega u^{s_0}(\alpha^p-\beta^p)\omega^n \right|
	  \leq&K a^{\frac{1}{p+1}} \|u^{s_0}\|_{L^{p+1}}^{\frac{1}{p}} \left(\int_\Omega|u^{s_0}|(\alpha^{p}+\beta^{p})\omega^n\right)^{\frac{p-1}{p}} \\
\leq& Ka^{\frac{1}{p+1}}E(u^{s_0})^{\frac{1}{p(n+1)}} \left[\left(\int_\Omega|u^{s_0}|\alpha^{p}\omega^n\right)^{\frac{p-1}{p}}+\left(\int_\Omega|u^{s_0}|\beta^{p}\omega^n\right)^{\frac{p-1}{p}} \right].
\end{align*}
Using \eqref{compare eqn}, we can simplify this to:
\begin{align*}
\left|\int_\Omega u^{s_0}(\alpha^p-\beta^p)\omega^n \right| 
\leq&Ka^{\frac{1}{p+1}} \left(\int_\Omega|u^{s_0}|\beta^{p}\omega^n\right)^{1 - \frac{n}{p(n+1)}}.
\end{align*}
Plugging this into \eqref{rewrite eqn} and using \eqref{compare eqn} again, we arrive at:
\begin{equation}\label{a p eqn}
\frac{\theta}{n+1}\int_\Omega \abs{u^{s_0}} \beta^p\omega^n \leq K +\left(\delta^2 + a^{\frac{1}{p+1}}\right) K\int_\Omega|u^{s_0}|\beta^p\omega^n.
\end{equation}
Finally, by rearranging, we conclude that $\int_\Omega\abs{u^{s_0}} \psi(u^{s_0})\omega^n$ is uniformly bounded for all $\delta, a$ sufficiently small; by \eqref{compare eqn} again, we have shown \eqref{key uniform estimate}.\\

\noindent{\bf Step 5:} We will now use the uniform energy bounds \eqref{key uniform estimate} and the gradient estimates from Section \ref{mu flow section} to derive an $L^\infty$ bound for $u^{s_0}(z, t)$ which is independent of $t$ (we do not claim it to be independent of the other variables however).

Set $M_t := \sup_\Omega\abs{u^{s_0}(z,t)}$ and $N_t := \sup_{Q_t}\abs{u^{s_0}}$. By Propositions \ref{u t estimate} and \ref{gradient estimate}, we have the following bounds:
\begin{equation}\label{eq:u_bounds}
\quad |u_t^{s_0}(z,t)|\leq C_1(1+N_t), \quad\text{ and } |\nabla_zu^{s_0}(z,t)|\leq C_2(1+N_t^{p}),
\end{equation}
for $C_1, C_2 \geq 1$ which are independent of $t$.

We suppose for the sake of a contradiction that $M_t$ is unbounded as $t\rightarrow\infty$. Then there exists a sequence $t_j\rightarrow\infty$ such that
$M_{t_j}\rightarrow\infty$ and
\begin{align*}
	M_t\leq M_{t_j}\qquad \text{ for all }t\leq t_j.
\end{align*}
It follows that $M_{t_j} = N_{t_j}$. We can assume further that $N_{t_0} \geq \max\{C_1, C_2\}$ and that $t_{j+1} - t_{j} > 2$ for all $j \geq 1$.

We perturb the $t_j$ slightly to points in $T_0$, and such that they are still increasing in $j$ (e.g. decrease each $t_j$ until it is in $T_0$). Fix $0 < \e < \max\{2 , (1+C_1)^{-1}\}$. Then for all $t \in (t_j - \e, t_j)$, $j\geq 1$, we have:
\[
- \e (1+ C_1) N_{t_j} \leq -\int_t^{t_j} C_1(1 + N_s)\, ds \leq u^{s_0}(z, t_j) - u^{s_0}(z,t) \leq u^{s_0}(z, t_j) + M_t.
\]
Rearranging and taking the supremum over $z\in \Omega$ gives:
\[
C^{-1} N_{t_j} \leq M_t \quad  \text{ for all } t\in (t_j - \e, t_j)
\]
for some large $C > 0$, which is independent of $t$. Since $\abs{T_0^c} < 2$, we can find a sub-sequence of $j$ such that $T_0 \cap (t_j - \e, t_j)$ is non-empty; for each such $j$, we pick $\tau_j \in T_0\cap (t_j - \e, t_j)$, and note that:
\[
C^{-1} N_{\tau_j} \leq M_{\tau_j} \quad\text{ and }\quad M_{\tau_j} \rightarrow\infty\text{ as }j\rightarrow\infty.
\]

Now choose $z_j \in\Omega$ such that $u^{s_0}(z_j,\tau_j)=-M_{\tau_j}$. Pick some $0 < \e < \frac{C^{-p}}{4C_2}$ and set $R_j \coloneqq \e M_{\tau_j}^{1-p}$. It follows \eqref{eq:u_bounds} then that
\[
u^{s_0}(z, \tau_j) \leq -\frac{M_{\tau_j}}{2}\quad \text{for all } z\in B_{R_j}(z_j);
\]
indeed, we have:
\begin{align*}
|u^{s_0}(z,\tau_j)-u^{s_0}(z_j,\tau_j)|
\leq& \sup_z|\nabla_zu^{s_0}(z,\tau_j)| (2R_j)  \\
\leq& [C_2 (1+N_{\tau_j}^p)] (\e M_{\tau_j}^{1-p})
< 2 C_2 C^{p} (\e M_{\tau_j}) < \frac{1}{2} M_{\tau_j}.
\end{align*}
Now pick $q > 0$ such that $q > n(p-1)$. By \eqref{key uniform estimate} and \eqref{eq:Sobolev ineq}, we have
\begin{align*}
	\|u^{s_0}(z,\tau_j)\|_{L^q(B_{R_j}(y_j))}  \leq   K\|u^{s_0}(z,\tau_j)\|_{L^q(\Omega)}  \leq K E(u^{s_0}(z,\tau_j))^{\frac{1}{n+1}}\leq K.
\end{align*}
But on the other hand, we have
\begin{align*}
	\|u^{s_0}(z,\tau_j)\|^q_{L^q(B_{R_j}(y_j))} \geq C M_{\tau_j}^q R_j^n=CM_{\tau_j}^{q+(n-np)},
\end{align*}
which is a contradiction, since $M_{\tau_j}\rightarrow\infty$. We obtain $\sup_{z\in\ov{\Omega}} |u^{s_0}(z,t)|\leq C$ for all $t\geq0$.\\

\noindent{\bf Step 6:} We are now ready to take limits to produce a solution to the truncated problem, which will provide a solution to our initial problem \eqref{eq:initial equation} for all $m$ uniformly large.

We start by taking the limit as $t\rightarrow\infty$. Recall that $c_{m,\delta} - a \leq J_{m,\delta}(u^{s_0}(\cdot,t))\leq c_{m,\delta} + a$ for all $t \geq 0$. Thus, we can choose some sequence of times $t_j\rightarrow\infty$ such that
\begin{align*}
	\frac{d}{dt}J_{m,\delta}(u^{s_0}(\cdot,t))\>0
\end{align*}
By Theorem~\ref{thm:estimate of flow} and the $t$-uniform bound shown in Step 5, we can take a further subsequence such that $\{u^{s_0}(z, t_j) \}$ converges uniformly to some $u_{m,\delta}(z) \in C^{3}(\Omega)$. By \eqref{eq:negative derivative along flow} and \eqref{slope inequality}, $u_{m,\delta}$ will be a solution to the perturbed equation \eqref{eq:perturbed equation} which satisfies
\begin{equation}\label{eq:bound of J_delta(u_delta)}
	c_{m,\delta}-a\leq J_{m,\delta}(u_{m,\delta})\leq c_{m,\delta}+a.
\end{equation}

We now wish to take the limits as $a\rightarrow 0$ and $\delta\rightarrow 0$. By the uniform bounds \eqref{large x bounds} and \eqref{small x bounds} from Step 1, we can find $K > 0$ such that
\begin{align*}
	\psi^n_{m,\delta}(z,x) =\eta_\delta\psi^n_m+\delta^2\leq\lambda_1^n(1-\theta)|x|^n+K e^{\sigma \abs{x}^{1+\frac{1}{n}}} +1\quad \text{ for all }x \leq 0, z\in\ov{\Omega}.
\end{align*}
Thus, by the uniform estimate \eqref{key uniform estimate} and the Moser--Trudinger inequality \eqref{eq:MT_ineq}, we have for any $\gamma > 0$ such that $\sigma(1+\gamma) < 2n$, that:
\begin{align*}
	&\int_\Omega\psi_{m,\delta}^{n(1+\gamma)}(z,u_{m,\delta})\omega^n \\
	\leq& \lambda_1^{n(1+\gamma)}(1-\theta)^{1+\gamma}\int_{\Omega}|u_{m,\delta}|^{n(1+\gamma)}\omega^n
	+K\int_\Omega e^{\sigma (1+\gamma) |u_{m,\delta}|^{1+\frac{1}{n}}} \omega^n+\Vol(\Omega) \leq K.
\end{align*}
Hence, by Ko{\l}odziej's $L^\infty$-estimate  \cite{Kol98} (c.f. \cite[Theorem 1.1]{HZ26}), we obtain a uniform $L^\infty$-bound for $\|u_{m,\delta}\|_{C^0}$. By Theorem \ref{thm:C3alpha-estimate}, $u_{m,\delta}$ is thus uniformly bounded in $C^2(\ov{\Omega})$, and we can take the limit as both $a, \delta$ go to zero to produce $u_m\in C^{1,1}(\ov{\Omega})\cap \cH(\Omega)$ solving \eqref{eq:initial equation} with $\psi_m$ and $J_m(u_m) = c_m > \frac{\theta k_m^{n+1}}{n+1}$; observe that this implies that $u_m\not= 0$.

Finally, if we now choose a fixed $m$ which is larger that the uniform $L^\infty$-bound for $u_m$, we will have $\psi_m(z, u_m) = \psi(z, u_m)$, and that $u_m$ will be a non-trivial solution to the original, untruncated problem \eqref{eq:initial equation}.
\end{proof}

\appendix

\section{Construction of $\mu$}\label{mu appendix}

\begin{lemma}\label{construction of mu}
For each $p \in (2, \infty)$, there exists a smooth function $\mu:(0,\infty)\to\mathbb{R}$ such that
\begin{itemize}\setlength{\itemsep}{1mm}
\item $\mu(t)=\log t$ for $0<t\leq1$;
\item $\mu(t)=t^{1/p}$ for $t\geq e^{2}$;
\item $\mu'(t)>0$;
\item $t^{2}\mu''(t)+t\mu'(t) \leq \frac{t}{p}\mu'(t)$.
\end{itemize}
\end{lemma}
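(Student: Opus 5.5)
The plan is to pass to the logarithmic variable, where the differential inequality becomes a monotonicity condition, and then solve an interpolation problem with one integral constraint.

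\textbf{Step 1: change of variables.} Set $s=\log t$ and $g(s)=\mu(e^s)$. Then
\[
g'(s)=t\mu'(t), \qquad g''(s)=t^2\mu''(t)+t\mu'(t).
\]
The requirements become: $g(s)=s$ for $s\le 0$, $g(s)=e^{s/p}$ for $s\ge 2$, $g'>0$, and $g''\le \frac1p g'$.

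Write $g'=h=e^{s/p}k$. Then $g''-\frac1p g'=e^{s/p}k'$, so the last condition is just $k'\le 0$, and $g'>0$ is $k>0$. On $s\le 0$ we need $h\equiv 1$, i.e. $k=e^{-s/p}$, which is decreasing. On $s\ge 2$ we need $h=\frac1p e^{s/p}$, i.e. $k\equiv \frac1p$.

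So it suffices to find a smooth, positive, nonincreasing $k$ on $\RR$ with
\[
k=e^{-s/p} \text{ for } s\le 0, \qquad k=\tfrac1p \text{ for } s\ge 2, \qquad \int_0^2 e^{s/p}k(s)\,ds=g(2)-g(0)=e^{2/p}.
\]
We then define $g(s)=\int_0^s e^{\tau/p}k(\tau)\,d\tau$, which gives $g(0)=0$ and matches both prescribed pieces. Finally set $\mu(t)=g(\log t)$.

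\textbf{Step 2: the integral constraint is attainable.} Among nonincreasing $k$ from $1$ at $s=0$ down to $\frac1p$ at $s=2$, the integral $\int_0^2 e^{s/p}k\,ds$ takes every value strictly between two extremes. The infimum comes from $k$ dropping to $\frac1p$ immediately, giving $e^{2/p}-1$. The supremum comes from $k$ staying near $1$ until $s=2$, giving $p(e^{2/p}-1)$.

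The target $e^{2/p}$ lies strictly between these. The lower inequality is trivial. For the upper one, put $x=2/p\in(0,1)$ (here $p>2$ is used). Consider $f(x)=2e^x-2-xe^x$. Then $f(0)=0$ and $f'(x)=e^x(1-x)>0$ on $(0,1)$, so $f>0$, i.e. $\frac{2}{x}(e^x-1)>e^x$.

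\textbf{Step 3: smooth construction.} Fix a smooth nonincreasing cutoff. Take a one-parameter family $k_\lambda$, $\lambda\in(0,1)$, of smooth nonincreasing functions obtained by gluing $e^{-s/p}$ (near $s\le 0$) to the constant $\frac1p$ (near $s\ge 2$) via a smooth monotone transition located near $s=2\lambda$. Each $k_\lambda$ is nonincreasing because $e^{-s/p}$ is already decreasing and $e^{-s/p}\ge e^{-2/p}>\frac1p$ on $[0,2]$.

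The integrals $I(\lambda)$ depend continuously on $\lambda$. They tend to near the lower extreme as $\lambda\to0$ and to near the upper extreme as $\lambda\to1$; with sharp enough transitions they approach $e^{2/p}-1$ and essentially $\int_0^2 e^{s/p}e^{-s/p}\,ds=2$ respectively. We have $2\ge e^{2/p}$ since $2/p<\log 2$ is not guaranteed, so I will instead compare directly with the upper value $p(e^{2/p}-1)$ from Step 2, choosing the transitions so that the limiting values bracket $e^{2/p}$. The intermediate value theorem then picks $\lambda$ with $I(\lambda)=e^{2/p}$.

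\textbf{Main obstacle.} The only delicate point is this bracketing with smooth, strictly monotone glued functions, given that on $[0,2]$ $k$ is pinned to $e^{-s/p}$ near $0$ rather than to the constant $1$. I would handle it by allowing $k$ to leave $e^{-s/p}$ and sit at a plateau level anywhere in $(\frac1p,1]$ before the final drop. This two-parameter family (plateau level and drop location) clearly sweeps the whole open interval of values from Step 2, and continuity closes the argument.
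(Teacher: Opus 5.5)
Your proposal is correct and is essentially the paper's proof: the same substitution $\mu(t)=\tau(\log t)$ with $\tau'(s)=e^{s/p}\varphi(s)$, reducing the lemma to finding a nonincreasing $\varphi$ with $\varphi=e^{-s/p}$ on $(-\infty,0]$, $\varphi=\frac1p$ on $[2,\infty)$ and $\int_0^2 e^{x/p}\varphi(x)\,dx=e^{2/p}$; the same bracketing $e^{2/p}-1<e^{2/p}<p(e^{2/p}-1)$ (the paper gets the right-hand inequality from $e^x\geq 1+x$ rather than your auxiliary function); and the same intermediate value argument along a family that leaves $e^{-s/p}$ almost immediately for a plateau at level $1-\epsilon$ (the paper's $\varphi_0$) before dropping to $\frac1p$. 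You should delete the first version of Step 3, because the claim $e^{-2/p}>\frac1p$ is false for $p$ close to $2$ (it fails whenever $p\log p<2$, e.g.\ $p=2.2$), so gluing $e^{-s/p}$ directly to $\frac1p$ near $s=2$ is not even monotone there, whereas the plateau construction in your last paragraph works for every $p>2$.
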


\begin{proof}
It suffices to construct a smooth function $\tau:\mathbb{R}\to\mathbb{R}$ such that
\begin{itemize}\setlength{\itemsep}{1mm}
\item $\tau(s)=s$ for $s\leq0$;
\item $\tau(s)=e^{s/p}$ for $s\geq2$;
\item $\tau'(s)>0$; and
\item $\tau''(s)-\frac{1}{p}\tau'(s) \leq 0$.
\end{itemize}
Given such $\tau$, then $\mu(t):=\tau(\log t)$ is the required function. To construct $\tau$, it suffices to construct a smooth function $\vp:\mathbb{R}\to\mathbb{R}$ such that
\begin{enumerate}[i)]\setlength{\itemsep}{1mm}
\item $\vp(s)=e^{-s/p}$ for $s\leq0$;
\item $\vp(s)=1/p$ for $s\geq2$;
\item $\vp'(s)\leq0$; and
\item $\int_{0}^{2}e^{x/p}\vp(x)dx=e^{2/p}$.
\end{enumerate}
Then it is straightforward to verify that $\tau(s):=\int_{0}^{s}e^{x/p}\vp(x)dx$ satisfies the above requirements.

\medskip

We will now explain how to construct $\vp$. It is clear how to construct functions satisfying i)-iii) by using, e.g. a symmetric mollifier; we thus focus on condition iv). Start by choosing a constant $\delta > 0$ sufficiently small such that $(1 - \delta) p > 2$. Then, by the elementary inequality $e^{x}\geq1+x$ for $x\geq0$, we have
\[
(p-1)e^{2/p} \geq (p-1)\left(1+\frac{2}{p}\right) = p+1-\frac{2}{p} > p + \delta.
\]
It follows that:
\[
\int_0^2 e^{x/p} dx = p e^{2/p} - p > e^{2/p} + \delta.
\]
Now, construct a smooth function $\vp_0$ which satisfies i), iii), and $\vp_0(s) = 1 - \e$ for all $s \geq 2\e$, where $\e > 0$ is sufficiently small such that:
\[
\int_0^2 e^{x/p} \vp_0(x) dx \geq \int_0^2 e^{x/p} dx - \delta > e^{2/p},
\]
by our choice of $\delta$.

By using piecewise linear interpolation between $\vp_0$ and the constant function $\frac{1}{p}$, and taking suitable mollifications, we can then construct a continuous family of smooth functions $\vp_{t}$, $t\in (2\e, 2)$, each of which satisfy i) - iii), and such that
\[
\int_0^2 e^{s/p}\vp_t(x)dx \rightarrow \int_0^2 e^{2/p} \vp_0(x)dx \quad\quad\text{ as } t\rightarrow 2\e
\]
and
\[
\int_0^2 e^{s/p}\vp_t(x)dx \rightarrow \int_0^2 \frac{1}{p} e^{2/p}dx = e^{2/p}-1 \quad\quad\text{ as } t\rightarrow 2.
\]
Since $\int_0^2 e^{2/p}\vp_0(x) dx > e^{2/p} > \int_0^2 \frac{1}{p}e^{2/p}dx$, we conclude by continuity that there exists some $t\in (2\e, 2)$ such that $\vp_t$ satisfies iv), as desired.
\end{proof}
\begin{remark}
    By the above construction,
    \begin{align*}
        \tau'(s)=e^{s/p}\varphi(s)
        \geq \frac{1}{p}e^{s/p}.
    \end{align*}
    Thus,
    \begin{align*}
        \mu'(t)=\tau'(s)t^{-1}\geq \frac{1}{p}t^{\frac{1}{p}-1},
    \end{align*}
    which is equivalent to $\mu(t)-t^{1/p}$ being nondecreasing. Thus, we have
    \begin{align}\label{slope inequality}
        (t_1-t_2)(\mu(t_1)-\mu(t_2))
        \geq (t_1-t_2)(t_1^{\frac{1}{p}}-t_2^{\frac{1}{p}})\quad \text{ for all }\ t_1,t_2>0
    \end{align}
\end{remark}

\bigskip

Denote the positive orthant of $\mathbb{R}^{n}$ by
\[
\mathcal{P} = \big\{\lambda=(\lambda_{1},\ldots,\lambda_{n})\in\mathbb{R}^{n}:\lambda_{i}>0 \ \text{for each $i$}\big\}.
\]
Define
\[
F(\lambda) = \mu\big(\MA(\lambda)\big) \ \text{where} \ \MA(\lambda) = \prod_{i}\lambda_{i}.
\]
\begin{lemma}\label{concavity of F}
When $p\geq n$, the function $F$ is concave on $\mathcal{P}$.
\end{lemma}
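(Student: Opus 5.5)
Write $F(\lambda)=\tau(s)$ with $s=\sum_i\log\lambda_i$, using the function $\tau$ from the proof of Lemma~\ref{construction of mu}, so that $\mu(t)=\tau(\log t)$. The plan is to compute the Hessian of $F$ directly and show it is negative semidefinite on $\mathcal{P}$. Since $\partial_i s=\lambda_i^{-1}$ and $\partial_i\partial_j s=-\delta_{ij}\lambda_i^{-2}$, we get
\begin{equation*}
\partial_i\partial_j F=\tau''(s)\,\lambda_i^{-1}\lambda_j^{-1}-\tau'(s)\,\delta_{ij}\lambda_i^{-2}.
\end{equation*}
Evaluate this on a vector $\xi\in\mathbb{R}^n$ and set $y_i=\xi_i/\lambda_i$. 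The quadratic form becomes
\begin{equation*}
Q(\xi)=\tau''(s)\Big(\sum_i y_i\Big)^2-\tau'(s)\sum_i y_i^2 .
\end{equation*}

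The key input is the differential inequality $\tau''\le\frac1p\tau'$. This is equivalent to the stated condition $t^2\mu''+t\mu'\le\frac tp\mu'$, because $\tau'(s)=t\mu'(t)$ and $\tau''(s)=t\mu'(t)+t^2\mu''(t)$. We also use $\tau'>0$.

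There are two cases, depending on the sign of $\tau''(s)$. If $\tau''(s)\le0$, both terms of $Q$ are nonpositive and we are done. If $\tau''(s)>0$, we combine Cauchy--Schwarz, $(\sum_i y_i)^2\le n\sum_i y_i^2$, with $\tau''\le \tau'/p$ to get
\begin{equation*}
Q(\xi)\le\Big(\frac np-1\Big)\tau'(s)\sum_i y_i^2\le0,
\end{equation*}
which uses $p\ge n$. Hence $D^2F\le0$ everywhere on the convex open set $\mathcal{P}$, so $F$ is concave there.

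There is no real obstacle; the argument is a direct computation. The only point needing care is that $\tau''$ may take either sign. This is why we cannot simply bound $\tau''(\sum_i y_i)^2\le\frac{\tau'}{p}(\sum_i y_i)^2$ without first splitting into the two cases above, since Cauchy--Schwarz is only applied when its multiplier $\tau''$ is positive.
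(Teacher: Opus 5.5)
Your proof is correct and is essentially the paper's argument, only rewritten through $\tau(s)=\mu(e^s)$, with $\tau'(s)=t\mu'(t)$ and $\tau''(s)=t^2\mu''(t)+t\mu'(t)$ at $t=\MA(\lambda)$. Your closing remark is mistaken, though: the case split is unnecessary, because multiplying $\tau''\le\tau'/p$ by $(\sum_i y_i)^2\ge 0$ is valid whatever the sign of $\tau''$ (this is exactly what the paper does), after which Cauchy--Schwarz is applied to the positive multiplier $\tau'/p\le\tau'/n$.
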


\begin{proof}
For notational convenience, we write
\[
\mu' = \mu'\big(\MA(\lambda)\big), \ \
\mu'' = \mu''\big(\MA(\lambda)\big), \ \
\MA = \MA(\lambda).
\]
We compute
\[
\frac{\de F}{\de\lambda_{i}} = \mu'\cdot\frac{\MA}{\lambda_{i}}
\]
and
\[
\frac{\de^{2}F}{\de\lambda_{i}\de\lambda_{j}} = (\mu''\cdot\MA^{2}+\mu'\cdot\MA)\cdot\frac{1}{\lambda_{i}\lambda_{j}}-\mu'\cdot\MA\cdot\frac{\delta_{ij}}{\lambda_{i}\lambda_{j}}
\]
Since $t^{2}\mu''(t)+t\mu'(t) \leq \frac{t}{p}\mu'(t)\leq \frac{t}{n}\mu'(t)$, then for any $(\xi_{1},\ldots,\xi_{n})\in\mathbb{R}^{n}$, we have
\[
\sum_{i,j}\frac{\de^{2}F}{\de\lambda_{i}\de\lambda_{j}}\xi_{i}\xi_{j}
\leq \mu'\cdot\MA\cdot\frac{1}{n}\left(\sum_{i}\frac{\xi_{i}}{\lambda_{i}}\right)^{2}-\mu'\cdot \MA\cdot \sum_{i}\frac{\xi_{i}^{2}}{\lambda_{i}^{2}}.
\]
Combining this with the Cauchy-Schwarz inequality
\[
\left(\sum_{i}\frac{\xi_{i}}{\lambda_{i}}\right)^{2} \leq n\sum_{i}\frac{\xi_{i}^{2}}{\lambda_{i}^{2}},
\]
we are done.
\end{proof}

	\bibliographystyle{alpha}
	\bibliography{Reference}
	
\end{document}